\def\({\left(}
\def\){\right)}
\def\Cal{\mathcal}
\def\eb{\varepsilon}
\def\R {\mathbb{R}}
\def\tilde{\widetilde}
\newcommand{\be}{\begin{equation} }
\newcommand{\ee}{\end{equation} }
\def\F {{\mathcal F}}
\def \and{\qquad\text{and}\qquad}
\def\Bbb{\mathbb}
\def\Dt{\partial_t}
\def\spann{\operatorname{span}}
\def\({\left(}
\def\){\right)}
\def\eb{\varepsilon}
\def\Cal{\mathcal}
\def\eb{\varepsilon}
\def\R {\mathbb{R}}
\def\F {{\mathcal F}}
\def\<{\left<}
\def\>{\right>}
\def \and{\qquad\text{and}\qquad}
\def\Bbb{\mathbb}
\def\Dt{\partial_t}
\newtheorem{proposition}{Proposition}[section]
\newtheorem{theorem}[proposition]{Theorem}
\newtheorem{corollary}[proposition]{Corollary}
\newtheorem{lemma}[proposition]{Lemma}
\theoremstyle{definition}
\newtheorem{definition}[proposition]{Definition}
\newtheorem{remark}[proposition]{Remark}
\newtheorem{example}[proposition]{Example}
\numberwithin{equation}{section}
\def\be{\begin{equation}}
\def\ee{\end{equation}}
\def\bp{\begin{proof}}
\def\ep{\end{proof}}
\def \au {\rm}
\def \bk {\it}
\def \no#1#2#3 {{\bf #1} (#3), #2.}
\def \eds#1#2#3 {#1, #2, #3.}
\title[Extensions of inertial manifolds]
{Smooth extensions for inertial manifolds of semilinear parabolic equations}
\author[A. Kostianko  and  S. Zelik]
{ Anna Kostianko and Sergey Zelik${}^{1,2}$}
\address{${}^1$
Department of Mathematics,\newline \indent
University of Surrey, GU27XH,
Guildford,  UK}
\address{${}^2$ \phantom{e}School of Mathematics and Statistics, Lanzhou University, Lanzhou
\newline\indent 730000,
P.R. China}
\email{anna.kostianko@surrey.ac.uk}
\email{s.zelik@surrey.ac.uk}
\begin{document}

\begin{abstract} The paper is devoted to a comprehensive study of  smoothness  of inertial manifolds for abstract semilinear parabolic problems. It is well known that in general we cannot expect more than $C^{1,\eb}$-regularity for such manifolds (for some positive, but small $\eb$). Nevertheless, as shown in the paper, under the natural assumptions, the obstacles to the existence of a $C^n$-smooth inertial manifold (where $n\in\Bbb N$ is any given number) can be removed by increasing the dimension and by modifying properly the nonlinearity outside of the global attractor (or even outside the $C^{1,\eb}$-smooth IM of a minimal dimension). The proof is strongly based on the Whitney extension theorem.
\end{abstract}

\subjclass[2010]{35B40, 35B42, 37D10, 37L25}
\keywords{Inertial manifolds, finite-dimensional reduction, smoothness, Whitney extension theorem}
\thanks{This work is partially supported by  the RSF grant   19-71-30004  as well as  the EPSRC
grant EP/P024920/1. The authors also would like to thank Dmitry Turaev for many fruitful discussions.}
\maketitle
\tableofcontents

\section{Introduction}
It is believed that in many cases the long-time behaviour of infinite dimensional dissipative dynamical systems generated   by evolutionary PDEs (at least in bounded domains)   can be effectively described by finitely many parameters (the so-called order parameters in the terminology of I. Prigogine) which obey a system of ODEs. This system of ODEs (if exists) is usually referred as an Inertial Form (IF) of the considered PDE, see \cite{ha,Rob1,R11,T97,Z14} and references therein for more details.  However, despite the fundamental significance of this reduction from both theoretical and applied points of view and big interest during the last 50 years, the nature of such a reduction and its rigorous justification remains a mystery.
\par
Indeed, it is well  understood now that the key question of the theory is how smooth the desired IF can/should be. For instance, in the case of {\it H\"older} continuous IFs, there is a highly developed machinery for constructing them based on the so-called attractors theory and the Mane projection theorem. We recall that, by definition, a global attractor is a compact invariant set in the phase space of the dissipative system considered which attracts as time goes to infinity the images of bounded sets under the evolutionary semigroup related with the considered problem. Thus, on the one hand, a global attractor (if it exists) contains all of the non-trivial dynamics and, on the other hand, it is usually essentially "smaller" than the initial phase space and this second property allows us to speak about the reduction of degrees of freedom in the limit dynamics. In particular, one of the main results of the attractors theory tells us that, under relatively weak assumptions on a dissipative PDE (in a bounded domain), the global attractor exists and has finite Hausdorff and fractal dimensions. In turn, due to the Mane projection theorem, this finite-dimensionality guarantees that this attractor can be projected one-to-one to a generic finite-dimensional plane of the phase space and that the inverse map is H\"older continuous. Finally, this scheme gives us an IF with H\"older continuous vector field defined on some compact set of $\R^N$ which is treated as a rigorous justification of the above mentioned finite-dimensional reduction. This approach works, for instance, for 2D Navier-Stokes equations, reaction-diffusion systems, pattern formation equations, damped wave equations, etc., see \cite{BV92,3,CV02,ha,hen,hunt,MZ08,R11,SY02,T97} and references therein.
\par
However, the above described scheme has a very essential intrinsic drawback which prevents us to treat it as a satisfactory solution of the finite-dimensional reduction problem. Namely, the vector field in the  IF thus constructed is H\"older continuous {\it only} and there is no way in general to get even its Lipschitz continuity. As a result, we may lose the uniqueness of solutions for the obtained IF and have to use the initial infinite-dimensional system at least in order to select the correct solution of the reduced IF. Other drawback is that the Mane projection theorem is not constructive, so it is not clear how to choose this "generic" plane for projection in applications, in addition, the IF constructed in such a way is defined only on a complicated compact set (the image of the attractor under the projection) and it is not clear how to extend it on the whole $\R^N$ preserving the dynamics (surprisingly, this is also a deep open problem, some partial solution of it is given in \cite{Rob1}, see also the references therein).
\par
It is also worth noting that the restriction for IF to be only H\"older continuous is far from being just a technical problem here. As relatively simple counterexamples show (see \cite{EKZ13,KZ18,MPSS93,Rom00,Z14}) the fractal dimension of the global attractor may be finite
 and not big, but the attractor cannot be embedded into any finite-dimensional Lipschitz (or even $\log$-Lipschitz) finite-dimensional sub-manifold of the phase space. What is even more important, the dynamics on this attractor does not look as finite-dimensional at all (despite the existence of a {\it H\"older continuous} (with the H\"older exponent arbitrarily close to one) IF provided by the Mane projection theorem). For instance, it may contain limit cycles with super-exponential rate of attraction, decaying travelling waves in Fourier space and other phenomena which are impossible in the classical dynamics generated by smooth ODEs.  These examples suggest that, in contradiction to the widespread paradigm,  H\"older continuous IF is probably not an appropriate tool for distinguishing between finite and infinite dimensional limit behavior and, as a result, fractal-dimension is not so good for estimating the number of degrees of freedom for the reduced dynamics, see \cite{EKZ13,KZ18,Z14} for more details.
\par
An alternative, probably more transparent approach to the finite-di\-men\-sio\-nal reduction problem which has been suggested in \cite{FST88}  is related with the concept of an Inertial Manifold (IM). By definition, an IM is a finite-dimensional smooth (at least Lipschitz) invariant sub-manifold of the phase space which is globally exponentially stable and possesses the so called exponential tracking property (=existence of asymptotic phase). Usually this manifold is $C^{1,\eb}$-smooth for some positive $\eb$ and is
 normally hyperbolic, so the exponential tracking is an immediate corollary of normal hyperbolicity. Then the corresponding IF is just a restriction of the initial PDE to IM and is also $C^{1,\eb}$-smooth. However, being a sort of center manifold, an IM requires a separation of the dependent variable to the "slow" and "fast" components and this, in turn, leads to extra rather restrictive assumptions which are usually formulated in terms of {\it spectral gap} conditions. Namely, let us consider the following abstract semilinear parabolic equation in a real Hilbert space $H$:
 \begin{equation}\label{00.abs}
 \partial_t u+Au=F(u),\ \ u\big|_{t=0}=u_0,
 \end{equation}
 where $A: D(A)\to H$ is a self-adjoint positive operator such that $A^{-1}$ is compact and $F:H\to H$ is a given nonlinearity which is globally Lipschitz in $H$ with Lipschitz constant $L$. Let also $0<\lambda_1\le\lambda_2\le\cdots$ be the eigenvalues of $A$ enumerated in the
 non-decreasing order  and $\{e_n\}_{n=1}^\infty$ be the corresponding eigenvectors. Then, the sufficient condition for the existence of $N$-dimensional IM reads
 \begin{equation}\label{00.sg}
 \lambda_{N+1}-\lambda_N>2L.
 \end{equation}
If this condition is satisfied, the desired IM $\Cal M_N$ is actually a graph of a Lipschitz function $M_N: H_N\to (H_N)^\bot$, where $H_N=\spann\{e_1,\cdots, e_N\}$ is a spectral subspace spanned by the first $N$ eigenvectors, and the corresponding IF has the form
\begin{equation}\label{00.if}
\frac {d}{dt}u_N+Au_N={\bf P}_NF(u_N+M_N(u_N)),\ \ u_N\in H_N\sim\R^N,
\end{equation}
where ${\bf P}_N$ is the orthoprojector to $H_N$, see \cite{S-sell,CFNT89,FST88,kok,M91,R94,RT96, Z14} and also \S\ref{s1} below.
\par
We see that, in contrast to the IF constructed via the Mane projection theorem, the IF which corresponds to the IM is explicit (uses the spectral projections) and is as smooth as the functions $F$ and $M_N$ are. We  mention that although the spectral gap condition \eqref{00.sg} is rather restrictive (e.g. in the case where $A$ is a Laplacian in a bounded domain, it is satisfied in 1D case only) and is known to be sharp in the class of abstract semilinear parabolic equations (see \cite{EKZ13,M91,R94,Z14} for more details), it can be relaxed for some concrete classes of PDEs. For instance,  for scalar 3D reaction-diffusion equations (using the so-called spatial averaging principle, see \cite{M-PS88}), for 1D reaction-diffusion-advection systems (using the proper integral transforms, see \cite{KZ18,KZ17}), for 3D Cahn-Hilliard equations and various modifications of   3D Navier-Stokes equations (using various modifications of spatial-averaging, see \cite{GG18,K18,KZ15,LS20}), for 3D complex Ginzburg-Landau equation (using the so-called spatio-temporal averaging, see \cite{K20}), etc. Note also that the global Lipschitz continuity assumption for the non-linearity $F$ is not an essential extra restriction since usually one proves the well-posedness and dissipativity of the PDE under consideration {\it before} constructing the IM. Cutting off the non-linearity outside the absorbing ball does not affect the limit dynamics, but  reduces the case of locally Lipschitz continuous non-linearity (satisfying the proper dissipativity restrictions) to the model case where the non-linearity is globally Lipschitz continuous. Of course, this cut-off procedure is not unique and as we will see below, the right choice of it is extremely important in the theory of IMs.
\par
The main aim of the present paper is to study the smoothness of the IFs for semilinear parabolic equations \eqref{00.abs} in the ideal situation where the non-linearity $F$ is smooth and the
spectral gap condition \eqref{00.sg} is satisfied. As we have already mentioned, in this case we have $C^{1,\eb}$-smooth IM $\Cal M_N$ for some $\eb>0$ and the associated IF \eqref{00.if} which is also $C^{1,\eb}$-smooth, see \cite{Z14} and references therein. But, unfortunately, the exponent $\eb>0$ here is usually very small (depending on the spectral gap) and in a more or less general situation, we cannot expect even the $C^2$-regularity of the IM. The spectral gap condition for $C^2$-regular IM is
\begin{equation}\label{00.sg2}
\lambda_{N+1}-2\lambda_N>3L
\end{equation}
and such exponentially big spectral gaps are not available if $A$ is a finite order elliptic operator in a bounded domain. The corresponding counterexamples were given in \cite{S-sell}, see also Example \ref{Ex2.sell} below. Thus, the existing IM theory does not allow us, even in the ideal situation, to construct more regular than $C^{1,\eb}$ IFs (where $\eb>0$ is small). This looks as an essential drawback at least by two reasons: 1) The lack of regularity prevents us to use higher order methods for numerical simulations of the reduced IF (as a result, direct simulations for the initial {\it smooth} PDE using the standard methods may be more effective than simulations based on the reduced non-smooth ODEs); 2) $C^{1,\eb}$-regularity is not enough to build up normal forms and/or study the bifurcations properly (for instance, the simplest saddle-node  bifurcation requires $C^2$-smoothness, the Hopf bifurcation needs $C^3$, etc., see \cite{katok,kie} for more details) and, therefore, we need to return back to the initial PDE to study these bifurcations.  Thus, the natural question
\par
"{\it Is it possible to construct a smooth ($C^k$-smooth for any finite $k$) or to extend the existing $C^{1,\eb}$-smooth IF to a more regular one}?"
\par
become crucial for the theory of inertial manifolds.
\par
In the present paper we give an affirmative answer on this question under slightly stronger spectral gap assumption
\begin{equation}\label{00.sginf}
\limsup_{N\to\infty}(\lambda_{N+1}-\lambda_N)=\infty.
\end{equation}
In contrast to \eqref{00.sg2}, this assumption does not require exponentially big spectral gaps (and is satisfied for the  most part of examples where the IMs exist), but guarantees the existence of infinitely-many spectral gaps of size larger than $2L$ and, consequently, the existence of an infinite tower of the embedded IMs:
\begin{equation}\label{00.tim}
\Cal M_{N_1}\subset\Cal M_{N_2}\subset\cdots\subset\Cal M_{N_n}\subset\cdots
\end{equation}
and the corresponding IFs
\begin{equation}\label{00.tif}
\frac d{dt}u_{N_n}+Au_{N_n}={\bf P}_{N_n}F(u_{N_n}+M_{N_n}(u_{N_n})),\ \ u_{N_n}\in H_{N_n}.
\end{equation}
Let $n\in\Bbb N$ be given. We say that a $C^{n,\eb}$-smooth submanifold $\widetilde{\Cal M}_{N_n}$ of the phase space $H$ (which is a graph of $C^{n,\eb}$-smooth $\widetilde M_{N_n}: H_{N_n}\to  (H_{N_n})^\bot$)  is a $C^{n,\eb}$-smooth extension of the initial IM $\Cal M_{N_1}$ for some $\eb>0$ if
\par
1) $\Cal M_{N_1}\subset\widetilde{\Cal M}_{N_n}$;
\par
2) The manifold $\widetilde{\Cal M}_{N_n}$ is $C^1_b$-close to the IM $\Cal M_{N_n}$.
\par
Then, the first condition guarantees that the $C^{n,\eb}$-smooth system of ODEs
\begin{equation}\label{00.ifm}
\frac d{dt}u_{N_n}+Au_{N_n}={\bf P}_{N_n}F(u_{N_n}+\widetilde M_{N_n}(u_{N_n})), \ \ u_{N_n}\in H_{N_n}
\end{equation}
will possess the initial IM ${\bf P}_{N_n}\Cal M_{N_1}$ as an invariant submanifold. The second condition together with  the robustness theorem for normally hyperbolic manifolds ensures us that this manifold will be globally exponentially stable and normally hyperbolic (in particular, it will possess an exponential tracking property in $H_{N_n}$). In this case we refer to the system \eqref{00.ifm} as a $C^{n,\eb}$-smooth extension of the corresponding IF \eqref{00.if}, see \S3 for more details. Thus, the extended IF is $C^n$-smooth on the one hand and, on the other hand, its limit dynamics {\it coincides} with the dynamics of the IF which corresponds to the IM $\Cal M_{N_1}$ and, in turn, coincides with the limit dynamics of the initial abstract parabolic problem \eqref{00.abs}. Note that the manifold $\widetilde{\Cal M}_{N_n}$ is {\it not necessarily} invariant under the solution semigroup $S(t)$ generated by the initial equation \eqref{00.abs} and this allows us to overcome the standard obstacles to the smoothness of an invariant manifold (e.g. such as resonances, see Examples \ref{Ex2.sell} and  \ref{Ex6.sell} below).
\par
The main result of the paper is the following theorem which suggests a solution of the smoothness problem for IMs.
\begin{theorem}\label{Th00.main} Let the nonlinearity $F\in C^\infty_b(H,H)$ and let the operator $A$ satisfy the spectral gap conditions \eqref{00.sginf}. Let also $N_1\in\Bbb N$ be the smallest number for which the spectral gap condition \eqref{00.sg} is satisfied and $\Cal M_{N_1}$ be the corresponding IM. Then, for every $n\in \Bbb N$, there exists $\eb=\eb_n>0$ and the $C^{n,\eb}$-smooth extension of
the IM $\Cal M_{N_1}$ as well as the $C^{n,\eb}$-smooth extension of the corresponding IF in the sense described above.
\end{theorem}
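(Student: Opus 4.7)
Set $K_n:=\mathbf{P}_{N_n}\Cal M_{N_1}\subset H_{N_n}$ and define $\Psi:K_n\to H_{N_n}^\bot$ by $\Psi(\mathbf{P}_{N_n}v):=\mathbf{P}_{N_n}^\bot v$ for $v\in\Cal M_{N_1}$. Since $\Cal M_{N_1}\subset\Cal M_{N_n}$ we have $\Psi=M_{N_n}|_{K_n}$, so the sought graph $\widetilde{\Cal M}_{N_n}=\operatorname{graph}(\widetilde M_{N_n})$ must obey $\widetilde M_{N_n}|_{K_n}=\Psi$ together with $\|\widetilde M_{N_n}-M_{N_n}\|_{C^1_b}\ll 1$. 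Hence the theorem reduces to extending $\Psi$ from the $C^{1,\eb}$-smooth closed set $K_n$ to a $C^{n,\eb}$-smooth map on $H_{N_n}$ which remains close to the known $C^{1,\eb}$-smooth $M_{N_n}$. Choosing $N_n$ from the infinite sequence of wide spectral gaps supplied by \eqref{00.sginf}, I would proceed via the $C^{n,\eb}$-version of the Whitney extension theorem.

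\textbf{Whitney jet from the dynamics.} Whitney's theorem produces a $C^{n,\eb}$-extension once one supplies a compatible jet $(J_\alpha)_{|\alpha|\le n}$ on $K_n$ with values in $H_{N_n}^\bot$, i.e.
\[
J_\alpha(q)-\sum_{|\beta|\le n-|\alpha|}\tfrac{1}{\beta!}J_{\alpha+\beta}(p)(q-p)^\beta=O(|q-p|^{n+\eb-|\alpha|}),\quad p,q\in K_n.
\]
I would set $J_0:=\Psi$ and $J_{e_i}:=\partial_iM_{N_n}|_{K_n}$; both are legitimate since $M_{N_n}\in C^{1,\eb}$, and standard Whitney estimates then give the required $C^1_b$-closeness. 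The coefficients $J_\alpha$ for $|\alpha|\ge 2$ cannot be obtained by differentiating $\Psi$ because $\Psi$ is only $C^{1,\eb}$ along $K_n$. Instead they are produced from the dynamics. Because $F\in C^\infty_b$ and the equation is parabolic, for every $v\in\Cal M_{N_1}$ both the orbit $t\mapsto S(t)v$ and the variational flow $v\mapsto S(t)v$ are $C^\infty$, and all their mixed time--space derivatives at $t=0$ are explicit polynomial expressions in $A$, $F$ and its derivatives evaluated at $v$; hence they depend \emph{smoothly} on $v\in H$ in the ambient Hilbert space. Differentiating the invariance identity $\mathbf{P}_{N_n}^\bot S(t)v=M_{N_n}(\mathbf{P}_{N_n}S(t)v)$ then yields algebraic relations which solve for a full ``virtual $n$-jet'' of $M_{N_n}$ at each $p=\mathbf{P}_{N_n}v\in K_n$ in terms of these smooth ambient data. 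These virtual coefficients define $J_\alpha$.

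\textbf{Finish and main obstacle.} Once the jet $(J_\alpha)$ is constructed and compatibility is verified, Whitney's theorem delivers $\widetilde M_{N_n}\in C^{n,\eb}(H_{N_n},H_{N_n}^\bot)$; the graph condition $\Cal M_{N_1}\subset\widetilde{\Cal M}_{N_n}$ is automatic from $J_0=\Psi$, and $C^1_b$-closeness to $\Cal M_{N_n}$ follows from the choice of $J_{e_i}$. The robustness theorem for normally hyperbolic invariant manifolds then transports normal hyperbolicity, global exponential stability and exponential tracking from $\Cal M_{N_n}$ to $\widetilde{\Cal M}_{N_n}$, producing the required $C^{n,\eb}$-smooth extension of the IF \eqref{00.ifm}. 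The technical heart, and the step I expect to be most delicate, is verifying Whitney's compatibility. Since $\Psi$ is only $C^{1,\eb}$, the error $\Psi(q)-\Psi(p)-d\Psi(p)(q-p)$ is \emph{a priori} only $O(|q-p|^{1+\eb})$, much larger than the $O(|q-p|^{n+\eb})$ required for $n\ge 2$. The argument must show that the virtual derivatives manufactured from the dynamics, together with the freedom in the $N_n-N_1$ directions transverse to $K_n$ and the wide spectral gap $\lambda_{N_n+1}-\lambda_{N_n}$ guaranteed by \eqref{00.sginf}, are quantitatively strong enough to cancel this Hölder defect down to order $n+\eb$ for a suitable small $\eb=\eb_n>0$; this quantitative compatibility check is where the bulk of the technical work lies.
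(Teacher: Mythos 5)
Your overall framework is the paper's: construct a ``virtual'' Taylor jet on $K_n={\bf P}_{N_n}\Cal M_{N_1}$, check the Whitney compatibility conditions, apply the Whitney extension theorem, then use normal-hyperbolicity robustness to transfer exponential tracking. You also correctly identify compatibility as the hard step. However, the proposal has a genuine gap precisely there, and also in how the jets are supposed to be manufactured.

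First, the prescription ``differentiate the invariance identity ${\bf Q}_{N_n}S(t)v=M_{N_n}({\bf P}_{N_n}S(t)v)$ and solve algebraically for the virtual $n$-jet of $M_{N_n}$'' does not close. Differentiating this identity $k$ times in $v$ produces, on the right, combinations of $M_{N_n}^{(j)}$ for all $j\le k$, and for $j\ge 2$ these derivatives do not exist (that is the whole problem). Treating them as formal unknowns yields a system that at $t=0$ is the tautology $0=0$, and for $t\ne0$ has no obvious unique solution: the solvability requires estimates on the linearized backward flow in weighted spaces, which is exactly where the spectral gaps enter. So the jets cannot be obtained by an ``algebraic'' solve from the smooth dependence of $S(t)$ alone.

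Second, and more importantly, a single wide spectral gap at $N_n$ is not enough. The paper's construction is genuinely inductive over a \emph{tower} $N_1<N_2<\cdots<N_n$ of spectral gaps: the $k$th gap is used to define the $k$th-order jet by solving an equation of variations of the form \eqref{3.jeteq}--\eqref{3.Fn} in a weighted space $L^2_{e^{(\theta_{k}+(k-1)\theta_{k-1})t}}(\R_-,H)$, and the gap at $N_{k+1}$ must dominate $k\theta_k$ (condition \eqref{4.gapnp1}). In addition the inhomogeneity $F^{[n+1]}(p,\xi)$ has to be chosen with a nontrivial ``symmetrization'' correction, roughly
\[
kF^{(k)}(W)\bigl[\{j^n_\xi V\}^{k-1},j^n_\xi W\bigr]-(k-1)F^{(k)}(W)\bigl[\{j^n_\xi V\}^{k}\bigr],
\]
mixing the jets $V$ built on $\Cal M_{N_n}$ with the jets $W$ built on $\Cal M_{N_{n+1}}$; the naive choice $F^{(k)}[\{j^n_\xi W\}^k]$ fails the compatibility check. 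None of this is visible from your description, which treats the construction of the $J_\alpha$, $|\alpha|\ge2$, as a side remark and hides the entire difficulty in the phrase ``quantitatively strong enough.'' As written, the compatibility verification --- which occupies the whole of \S4 of the paper --- is not addressed, and the proposed route to the jets would not produce objects for which one could even begin to verify it. Finally, $C^1_b$-closeness to $M_{N_n}$ is not automatic from choosing $J_0,J_{e_i}$ correctly; the paper needs an explicit cut-off and mollification step (see \eqref{3.small}) to enforce it away from $K_n$.
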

The proof of this theorem is given in \S\ref{s3} and \S\ref{s4}. To construct the desired extension $\widetilde M_{N_n}$, we first define it on the manifold ${\bf P}_{N_n}\Cal M_{N_1}$ only in a natural way $\widetilde M_{N_n}(p)=(1-{\bf P}_{N_n})M_{N_1}({\bf P}_{N_1}p)$. Then, we present an explicit construction of Taylor jets of order $n$ for this function via some inductive procedure, see \S\ref{s3}. Finally, we check (in \S\ref{s4}) the compatibility conditions for the constructed Taylor jets and get the desired extension by the Whitney extension theorem.
\par
Our main result can be reformulated in the following way.
\begin{corollary} \label{Cor00.main} Let the assumptions of Theorem \ref{Th00.main} hold. Then, for every $n\in\Bbb N$, there exists $\eb=\eb_n>0$ and $C^{n-1,\eb}$-smooth "correction"  $\widetilde F_n(u)$ of the initial nonlinearity  $F$ such that
\par
1) $\widetilde F_n(u)=F(u)$ for all $u\in\Cal M_{N_1}$ and $\Cal M_{N_1}$ is an IM for the modified equation
\begin{equation}\label{00.absM}
\partial_t u+Au=\widetilde F_n(u),\ \ u\big|_{t=0}=u_0
\end{equation}
as well. In particular, the dynamics of \eqref{00.absM} on $\Cal M_{N_1}$ coincides with the initial dynamics (generated by \eqref{00.abs}) and $\Cal M_{N_1}$ possesses an exponential tracking property for solutions of \eqref{00.absM}.
\par
2) The extended manifold $\widetilde {\Cal M}_{N_n}$ constructed in Theorem \ref{Th00.main} is an IM (of smoothness $C^{n,\eb}$) for the modified equation  \eqref{00.absM}, see Corollary \ref{Cor5.eq} below.
\end{corollary}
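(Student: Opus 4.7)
The plan is to reverse-engineer the nonlinearity from the extended graph $\widetilde{\Cal M}_{N_n}$ produced in Theorem \ref{Th00.main}: I will write down the unique formula for $\widetilde F_n$ that turns $\widetilde{\Cal M}_{N_n}$ into a genuine invariant manifold for \eqref{00.absM} and then check, by direct computation, that the resulting vector field coincides with $F$ on the smaller manifold $\Cal M_{N_1}$. Writing any $u\in H$ as $u=u_N+v$ with $u_N={\bf P}_{N_n}u\in H_{N_n}$ and $v=(1-{\bf P}_{N_n})u$, I propose to set
\begin{equation*}
\widetilde F_n(u):={\bf P}_{N_n}F\bigl(u_N+\widetilde M_{N_n}(u_N)\bigr)+(1-{\bf P}_{N_n})\Bigl\{A\widetilde M_{N_n}(u_N)+D\widetilde M_{N_n}(u_N)\bigl[{\bf P}_{N_n}F(u_N+\widetilde M_{N_n}(u_N))-Au_N\bigr]\Bigr\}.
\end{equation*}
Since $\widetilde M_{N_n}\in C^{n,\eb}_b$ by Theorem \ref{Th00.main}, one has $D\widetilde M_{N_n}\in C^{n-1,\eb}_b$, and the usual regularity of the graph transform (cf.\ \S\ref{s1}) makes $A\widetilde M_{N_n}$ a well-defined $C^{n-1,\eb}_b$ map into $H$; hence $\widetilde F_n\in C^{n-1,\eb}_b(H,H)$, the single loss of one derivative being caused precisely by the factor $D\widetilde M_{N_n}$.

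For assertion (1) I take $u\in\Cal M_{N_1}$, i.e.\ $u=u_1+M_{N_1}(u_1)$ with $u_1\in H_{N_1}$. The prescription $\widetilde M_{N_n}(p)=(1-{\bf P}_{N_n})M_{N_1}({\bf P}_{N_1}p)$ on ${\bf P}_{N_n}\Cal M_{N_1}$ used in the proof of Theorem \ref{Th00.main} gives both the identity $u_N+\widetilde M_{N_n}(u_N)=u$ and the chain-rule formula $D\widetilde M_{N_n}(u_N)h=(1-{\bf P}_{N_n})DM_{N_1}(u_1)({\bf P}_{N_1}h)$. Plugging these into the definition of $\widetilde F_n(u)$, using ${\bf P}_{N_1}{\bf P}_{N_n}={\bf P}_{N_1}$ and $(1-{\bf P}_{N_n})(1-{\bf P}_{N_1})=(1-{\bf P}_{N_n})$, and invoking the invariance of $\Cal M_{N_1}$ under \eqref{00.abs},
\begin{equation*}
DM_{N_1}(u_1)\bigl[{\bf P}_{N_1}F(u)-Au_1\bigr]+AM_{N_1}(u_1)=(1-{\bf P}_{N_1})F(u),
\end{equation*}
a direct algebraic manipulation yields $\widetilde F_n(u)=F(u)$. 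Consequently the vector fields of \eqref{00.abs} and \eqref{00.absM} agree on $\Cal M_{N_1}$, so $\Cal M_{N_1}$ remains invariant with unchanged dynamics, and because $\widetilde F_n-F$ is small in $C^1$ near $\Cal M_{N_1}$ the robustness of normally hyperbolic manifolds preserves its exponential tracking property.

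For assertion (2) I observe that the definition of $\widetilde F_n$ is exactly the tangential invariance identity for the graph of $\widetilde M_{N_n}$: if $u_N(t)$ solves the reduced equation $\dot u_N+Au_N={\bf P}_{N_n}\widetilde F_n(u_N+\widetilde M_{N_n}(u_N))$ in $H_{N_n}$, then differentiating $v(t):=\widetilde M_{N_n}(u_N(t))$ and comparing with the $(1-{\bf P}_{N_n})$-part of \eqref{00.absM} reproduces verbatim the bracketed term in the formula above, so $(u_N,v)$ satisfies \eqref{00.absM}. Invariance of $\widetilde{\Cal M}_{N_n}$ is thus automatic, and the $C^1_b$-closeness between $\widetilde{\Cal M}_{N_n}$ and $\Cal M_{N_n}$ guaranteed by Theorem \ref{Th00.main}(2), combined with the persistence theorem for normally hyperbolic IMs, transfers global exponential attraction and exponential tracking from $\Cal M_{N_n}$ (for \eqref{00.abs}) to $\widetilde{\Cal M}_{N_n}$ (for \eqref{00.absM}), upgrading it to a $C^{n,\eb}$-IM.

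The main obstacle, I expect, will be to control the global Lipschitz norm of $\widetilde F_n$ on $H$ with a constant sufficiently close to $L$ so that the spectral gap condition \eqref{00.sg} still produces the required normal hyperbolicity for the modified equation. The term $Au_N$ is only locally bounded on $H_{N_n}$ and $A\widetilde M_{N_n}(u_N)$ inherits only the regularity of $\widetilde M_{N_n}$; both have to be tempered by a smooth cut-off outside a large absorbing ball, performed so as to leave $\Cal M_{N_1}$ untouched and to preserve the $C^{n-1,\eb}$-regularity. This is technical but routine once the algebraic identity above is in place, and it is the only non-algebraic step of the argument.
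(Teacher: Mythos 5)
Your approach is essentially the paper's (Corollary~\ref{Cor5.eq}): define $\widetilde F_n$ component\-wise by the tangential invariance identity for the graph of $\widetilde M_{N_n}$, and observe that the loss of one derivative comes from the factor $D\widetilde M_{N_n}$. Your displayed formula with the term $-Au_N$ inside the bracket is the correct one; the paper's \eqref{5.QN} writes $-A\widetilde M_{N_n}(u_{N_n})$ in that slot, which is plainly a misprint (it is not even type-consistent, since $\widetilde M_{N_n}'(u_{N_n})$ acts on $H_{N_n}$ while $A\widetilde M_{N_n}(u_{N_n})\in{\bf Q}_{N_n}H$). You also go further than the paper in spelling out the algebra showing $\widetilde F_n=F$ on $\Cal M_{N_1}$, which is a welcome clarification.

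There are two substantive points where you diverge, and both favour the paper's route. First, to show that $\widetilde{\Cal M}_{N_n}$ is an IM for \eqref{00.absM} you invoke the persistence theorem for normally hyperbolic manifolds via ``$C^1_b$-closeness''; but $\widetilde F_n$ is \emph{not} globally $C^1$-close to $F$ (it agrees with $F$ only on $\Cal M_{N_1}$), so that closeness is between the graphs $\widetilde M_{N_n}$ and $M_{N_n}$, not between the vector fields, and the persistence theorem does not directly apply. The paper's argument is much cleaner and entirely elementary: because $\widetilde F_n(u)$ depends on $u$ only through ${\bf P}_{N_n}u$, the quantity $v(t):=u(t)-{\bf P}_{N_n}u(t)-\widetilde M_{N_n}({\bf P}_{N_n}u(t))$ satisfies the \emph{decoupled linear} equation $\Dt v+Av=0$ with ${\bf P}_{N_n}v\equiv0$, whence $\|v(t)\|\le e^{-\lambda_{N_n+1}t}\|v(0)\|$; this gives global exponential attraction and the tracking property for $\widetilde{\Cal M}_{N_n}$ without any appeal to hyperbolicity or smallness. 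Second, your closing worry about controlling the global Lipschitz constant of $\widetilde F_n$ to rerun the spectral-gap machinery is well observed (with $-Au_N$ rather than $-A\widetilde M_{N_n}(u_{N_n})$ the map need not be globally bounded), but it is in fact moot: the paper never reapplies Theorem~\ref{Th2.IM} to the modified equation, precisely because the invariance and attraction are verified directly as above, so no spectral-gap condition for $\widetilde F_n$ is required.
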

In this interpretation, the modified nonlinearity $\widetilde F_n$ can be considered as a "cutted-off" version of the initial function $F$ and the main result claims that all obstacles for the existence of $C^n$-smooth IM can be removed by increasing the dimension of the IM and using the properly chosen cut-off procedure.
\par
To conclude, we note that the main aim of this paper is to verify the principal possibility to get smooth extensions of IM rather than to obtain the optimal bounds for the dimensions $N_n$ of the constructed extensions. By this reason, the obtained bounds look far from being optimal, but we believe that they can be essentially improved, see Remark \ref{Rem5.final} for the discussion of this problem.
\par
The paper is organized as follows. In \S\ref{s1} we recall the standard facts about smooth functions in Banach spaces, their Taylor jets, direct and converse Taylor theorems and the Whitney extension theorem which is the main technical tool for what follows. In \S\ref{s2} we collect basic facts about the construction of IMs for semilinear parabolic equations via the Perron method and discuss known facts about the smoothness of these IMs. The main result (Theorem \ref{Th00.main}) is presented in \S\ref{s3}. The proof of it is also given there by modulo of compatibility conditions for Whitney extension theorem which are verified in \S\ref{s4}. Finally, the applications of the proved theorem as well as a discussion of open problems and related topics are given in \S\ref{s5}.

\section{Preliminaries I: Taylor expansions and Whitney Extension Theorem}\label{s1}

In this section we briefly recall the standard results on Taylor expansions
of smooth functions in Banach spaces and related Whitney extension theorem
 as well as prepare some technical tools which will be used later. We
start with some basic facts from multi-linear algebra, see e.g. \cite{hajo} for a more detailed exposition.
Let $X$ and  $Y$ be two normed spaces. For any $n\in\Bbb N$, we denote by $\Cal L_s(X^n,Y)$
the space of multi-linear continuous symmetric maps from $X^n$ to $Y$ endowed
by the standard norm
$$
\|M\|_{\Cal L_s(X^n,Y)}:= \sup_{\xi_i\in X,\, \xi_i\ne0}
\left\{\frac{\|M(\xi_1,\cdots,\xi_n)\|}{\|\xi_1\|\cdots\|\xi_n\|}\right\}.
$$
Every element $M\in\Cal L_s(X^n,Y)$ defines a homogeneous continuous polynomial
$P_M$ of order $n$ on $X$ with values in $Y$ via
$$
P_M(\xi):= M(\{\xi\}^n),\ \ \text{ where } \{\xi\}^n:=\underbrace{\xi,\cdots,\xi}_{\text{$n$-times}}.
$$

Vice versa, the multi-linear symmetric map $M=M_P$ can be restored in a
unique way if the corresponding homogeneous polynomial is known via the
polarization equality:
$$
M_P (\xi_1,\cdots,\xi_n)=\frac{1}{2^nn!}\sum_{\eb_i=\pm1, i=1,\cdots,n}\eb_1\cdots\eb_n
 P(a+\sum_{j=1}^n\eb_j\xi_j)
$$
for all $a,\xi_1,\cdots,\xi_n\in X$, see e.g. \cite{hajo}. Thus, there is a one-to-one correspondence
between homogeneous polynomials and multi-linear symmetric
maps. Moreover, if we introduce the following norm on the space $\Cal P_n(X,Y)$
of $n$-homogeneous polynomials
$$
\|P\|_{\Cal P_n(X,Y)}:=\sup_{\xi\ne0}\left\{\frac{\|P(\xi)\|}{\|\xi\|^n}\right\},
$$
this correspondence becomes an isometry. By this reason, we will identify
below multi-linear forms and the corresponding homogeneous polynomials
where this does not lead to misunderstandings. We also mention
here the generalization of the Newton binomial formula, namely, for any
$P\in\Cal P_n(X,Y)$ and $\xi,\eta\in X$, we have
\begin{equation}\label{2.1}
 P(\xi+\eta)=\sum_{j=0}^n C_n^j P(\{\xi\}^j,\{\eta\}^{n-j}),\ \  C^j_n:=\frac{n!}{j!(n-j)!},
\end{equation}
see e.g. \cite{hajo}. Finally, we denote by $\Cal P^n(X,Y)$ the space of all continuous
polynomials of order less than or equal to $n$ on $X$ with values in $Y$, i.e. $P(\xi)\in\Cal P^n(X,Y)$ if
$$
P(\xi) =\sum_{j=0}^n\frac1{j!}P_j(\xi),\ \  P_j(\xi)\in \Cal P_j(X,Y).
$$
The following standard result is crucial for our purposes.
\begin{lemma}\label{Lem2.1} For every  $n\in\Bbb N$ there exist real numbers
$a_{kj}\in\R$, $k,j\in \{0,\cdots,n\}$,
such that for every $P=\sum_{k=0}^n\frac1{k!}P_k$, $P_k\in\Cal P_k(X,Y)$ and every
$k\in\{0,\cdots,n\}$, we have
\begin{equation}\label{2.2}
P_k(\xi) =\sum_{j=0}^na_{kj}P\(\frac jn\xi\)
\end{equation}
and, therefore,
\begin{equation}\label{2.3}
 \|P_k(\xi)\|\le K_{n,k} \max_{j=0,\cdots, n}\|P\(\frac jn\xi\)\|
\end{equation}
for some constants $K_{n,k}$ which are independent of $P$.
\end{lemma}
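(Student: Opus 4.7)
The key observation is that everything reduces to a one-variable interpolation problem. Fix $\xi\in X$ and consider the auxiliary map $t\mapsto P(t\xi)$ from $\R$ to $Y$. By the homogeneity of each $P_k$, we have
$$
P(t\xi)=\sum_{k=0}^n\frac{1}{k!}P_k(t\xi)=\sum_{k=0}^n\frac{t^k}{k!}P_k(\xi),
$$
so the values $c_k(\xi):=P_k(\xi)/k!\in Y$ are precisely the coefficients of the $Y$-valued degree-$n$ polynomial $q(t):=P(t\xi)$ in the variable $t$.

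The plan is then to recover these coefficients from the $n+1$ sample values $q(j/n)=P(\tfrac{j}{n}\xi)$, $j=0,1,\ldots,n$. Since the nodes $0,\tfrac{1}{n},\tfrac{2}{n},\ldots,1$ are pairwise distinct, the associated Vandermonde matrix $V=(V_{jk})_{j,k=0}^n$ with $V_{jk}=(j/n)^k$ is invertible (its determinant is a product of nonzero differences $\tfrac{j-i}{n}$). I would let $b_{kj}:=(V^{-1})_{kj}$ and set $a_{kj}:=k!\,b_{kj}$. These numbers depend only on $n$ and $k$ (they are produced by inverting a numerical matrix), not on $\xi$, $X$, $Y$, or $P$. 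Applying $V^{-1}$ to the vector $(q(j/n))_{j=0}^n$ componentwise — which is legitimate because the linear operation acts coordinatewise on a $Y$-valued vector — gives
$$
\frac{P_k(\xi)}{k!}=c_k(\xi)=\sum_{j=0}^n b_{kj}\,P\!\left(\frac{j}{n}\xi\right),
$$
which, after multiplying by $k!$, is exactly the claimed identity \eqref{2.2}.

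The estimate \eqref{2.3} is then immediate from the triangle inequality in $Y$: taking norms in \eqref{2.2} gives
$$
\|P_k(\xi)\|\le\Bigl(\sum_{j=0}^n|a_{kj}|\Bigr)\max_{j=0,\ldots,n}\Bigl\|P\!\left(\tfrac{j}{n}\xi\right)\Bigr\|,
$$
so one may take $K_{n,k}:=\sum_{j=0}^n|a_{kj}|$, which again depends only on $n$ and $k$. There is no genuine obstacle here; the only point that needs to be noticed carefully is the first one, namely that $k$-homogeneity is precisely what converts the polynomial character of $P$ in $\xi$ into a polynomial character in the scalar $t$ with coefficients $P_k(\xi)/k!$. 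Once that is done, the result is pure Vandermonde interpolation applied componentwise in $Y$.
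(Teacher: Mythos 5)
Your proof is correct, and it is essentially the standard argument (reduction to scalar Lagrange/Vandermonde interpolation on the line through $\xi$, exploiting $k$-homogeneity of $P_k$). The paper itself does not prove Lemma \ref{Lem2.1} but simply cites \cite{hajo}; your write-up supplies the explicit argument that reference would contain, including the correct observation that the inverse Vandermonde coefficients depend only on $n$ and act coordinatewise on $Y$-valued vectors, so no finite-dimensionality of $Y$ is needed. The only cosmetic caveat: for $j=0$ the term $P(\tfrac{0}{n}\xi)=P(0)=P_0$ is constant in $\xi$, which is fine and consistent with the formula, but worth noticing since the node $t=0$ is what lets you pick off the constant term $P_0$ without ambiguity.
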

For the proof of this lemma see \cite{hajo}.

\begin{corollary}\label{Cor2.2} Let $P(\xi, \delta)\in P^n(X,Y)$ be a family of polynomials of $\xi$
depending on a parameter $\delta\in B$ where $B$ is a  set in $X$ containing
zero. Assume that
\begin{equation}\label{2.4}
 \|P(\xi,\delta)\| \le C(\|\xi\|+\|\delta\|)^{n+\alpha},\ \  \xi\in X,\ \  \delta\in B
\end{equation}
for some $\alpha\ge0$. Then, for any $k\in\{0,\cdots,n\}$,
\begin{equation}\label{2.5}
 \|P_k(\cdot,\delta)\|_{\Cal P_k(X,Y)}\le C_k\|\delta\|^{n-k+\alpha}
\end{equation}
for some constants $C_k$ depending on $C$, $n$ and $k$.
\end{corollary}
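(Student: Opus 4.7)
The plan is to apply Lemma \ref{Lem2.1} to the polynomial $P(\cdot,\delta)\in\Cal P^n(X,Y)$ for each fixed $\delta\in B$, and then exploit the fact that each component $P_k(\cdot,\delta)$ is $k$-homogeneous in $\xi$ by rescaling $\xi$ to balance it against $\|\delta\|$. Concretely, fixing $\delta\in B$, Lemma \ref{Lem2.1} gives
$$
P_k(\xi,\delta)=\sum_{j=0}^n a_{kj}P\(\tfrac{j}{n}\xi,\delta\),
$$
so together with the hypothesis \eqref{2.4} we obtain, for all $\xi\in X$,
$$
\|P_k(\xi,\delta)\|\le K_{n,k}\,C\,(\|\xi\|+\|\delta\|)^{n+\alpha}.
$$

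Next I would use the $k$-homogeneity of $P_k(\cdot,\delta)$: for any $\lambda>0$,
$$
\|P_k(\xi,\delta)\|=\lambda^{-k}\|P_k(\lambda\xi,\delta)\|\le \lambda^{-k}K_{n,k}\,C\,(\lambda\|\xi\|+\|\delta\|)^{n+\alpha}.
$$
Assuming $\xi\neq0$ and $\delta\neq0$, the right-hand side is minimised (up to a universal constant) by the natural choice $\lambda=\|\delta\|/\|\xi\|$, which equalises the two summands and yields
$$
\|P_k(\xi,\delta)\|\le 2^{n+\alpha}K_{n,k}\,C\,\|\xi\|^k\|\delta\|^{n-k+\alpha}.
$$
Setting $C_k:=2^{n+\alpha}K_{n,k}\,C$, this is precisely the bound \eqref{2.5}.

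Finally I would dispose of the degenerate cases. If $\xi=0$ and $k\ge1$, then $P_k(0,\delta)=0$ by homogeneity, so the estimate holds trivially; the case $k=0$ is already a pointwise bound. If $\delta=0$ and $n-k+\alpha>0$, then \eqref{2.4} forces $\|P(\xi,0)\|\le C\|\xi\|^{n+\alpha}$, and a polynomial in $\Cal P^n(X,Y)$ dominated by $\|\xi\|^{n+\alpha}$ must have all components $P_k(\cdot,0)$ of degree strictly less than $n+\alpha$ equal to zero, so again the conclusion holds. There is no real obstacle in this proof; the only genuine idea is the rescaling $\lambda=\|\delta\|/\|\xi\|$ that converts the uniform estimate \eqref{2.4} into the asymmetric one \eqref{2.5} with the correct "gain" of $\|\delta\|^{n-k+\alpha}$, and the rest is bookkeeping.
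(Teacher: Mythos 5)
Your proof is correct and uses essentially the same approach as the paper: apply Lemma \ref{Lem2.1} to reduce the estimate on $P_k$ to the hypothesis \eqref{2.4}, then exploit the $k$-homogeneity of $P_k$ in $\xi$ by a rescaling that trades $\|\xi\|$ against $\|\delta\|$. The paper performs this in two steps (replace $\xi$ by $\|\delta\|\xi$, then normalize to the unit sphere), whereas you optimize the scaling parameter $\lambda=\|\delta\|/\|\xi\|$ in one step; these are the same idea, and the constants agree. Your extra paragraph on the degenerate cases $\xi=0$ or $\delta=0$ is a harmless but welcome completeness check that the paper leaves implicit.
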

\begin{proof} Indeed, according to \eqref{2.3} and \eqref{2.4}, we have
$$
\|P_k(\xi,\delta)\|\le C'(\|\xi\|+\|\delta\|)^{n+\alpha}.
$$
Assuming that $\delta\ne0$ (there is nothing to prove otherwise), replacing $\xi$ by
$\|\delta\|\xi$ and using that $P_k$ is homogeneous of order $k$, we get
$$
\|P_k(\xi,\delta)\|\le C'(1 + \|\xi\|)^{n+\alpha}\|\delta\|^{n-k+\alpha}.
$$
Using once more that $P_k$ is homogeneous of order $k$ in $\xi$, we finally arrive at
$$
\|P_k(\xi,\delta)\|\le C''\|\xi\|^k\|\delta\|^{n-k+\alpha}
$$
which gives \eqref{2.5} and finishes the proof.
\end{proof}

Let  now $U \subset X$ be an open set and let $F:U\to Y$ be a map. As usual, for any
$u\in U$, we denote by $F'(u)\in \Cal L(X,Y)$ the Frechet derivative of $F$ at $u$ (if
it exists). Analogously, for any $n\in\Bbb N$, we denote by $F^{(n)}(u)\in \Cal L_s(X^n,Y)$
its $n$th Frechet derivative. The space of all functions $F: U\to Y$ such that
$F^{(n)}(u)$ exists and continuous as a function from $U$ to $\Cal L_s(X^n,Y)$ is denoted
by $C^n(U,Y)$. For any $\alpha\in(0,1]$, we denote by $C^{n,\alpha}(U,Y)$ the space of
functions $F\in C^n(U,Y)$ such that $F^{(n)}$ is H\"older continuous with exponent
$\alpha$ on $U$. The action of $F^{(n)}(u)$ to vectors $\xi_1,\cdots,\xi_n\in X$ is denoted by
$F^{(n)}(u)[\xi_1,\cdots,\xi_n]$. The Taylor jet of length $n+1$ of the function $F$ at point
$u$ and vector $\xi\in X$ will be denoted by $J^n_\xi F(u)$:
\begin{equation}\label{2.6}
 J^n_\xi F(u):=F(u)+\frac1{1!}F'(u)\xi+\frac1{2!}F''(u)[\xi,\xi]+\cdots+\frac1{n!}F^{(n)}(u)[\{\xi\}^n].
\end{equation}
Obviously, the function $\xi\to J^n_\xi F(u)\in\Cal P^n(X,Y)$ for every $u\in U$. We will
also systematically use the truncated Taylor jets
\begin{equation}\label{2.7}
 j^n_\xi F(u):=\frac1{1!}F'(u)\xi+\frac1{2!}F''(u)[\xi,\xi]+\cdots+\frac1{n!}F^{(n)}(u)[\{\xi\}^n]
 \end{equation}
which do not contain zero order term.

\begin{theorem}[Direct Taylor theorem]\label{Th2.3} Let $F\in C^n(U,Y)$ and  $u_1,u_2\in U$
be such that $u_t:=tu_1+(1-t)u_2\in U$ for all $t\in[0,1]$. Let also $\xi:=u_2-u_1$.
Then
\begin{multline}\label{2.8}
F(u_2)=J^n_\xi F(u_1)+\\+\frac1{n!}\int^1_0(1-s)^{n-1}\(F^{(n)}(u_1+s\xi)-F^{(n)}(u_1)\)\,ds [\{\xi\}^n].
\end{multline}
In particular, if $F\in C^{n,\alpha}(U,Y)$, then
\begin{equation}\label{2.9}
 \|F(u_2)-J^n_\xi F(u_1)\|\le C\|\xi\|^{n+\alpha}
 \end{equation}
for some positive $C$.
\end{theorem}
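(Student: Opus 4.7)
My strategy is to reduce the identity to the classical scalar Taylor formula with integral remainder applied to the restriction of $F$ to the segment joining $u_1$ and $u_2$.

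First, I introduce the $Y$-valued auxiliary function $g:[0,1]\to Y$ defined by $g(t):=F(u_1+t\xi)$. Since the whole segment $\{u_1+t\xi:t\in[0,1]\}$ lies in $U$ by hypothesis and $F\in C^n(U,Y)$, a direct induction on $k$ using the chain rule in Banach spaces and the symmetry of higher Fr\'echet derivatives yields $g\in C^n([0,1],Y)$ together with
$$
g^{(k)}(t)=F^{(k)}(u_1+t\xi)[\{\xi\}^k],\qquad k=0,1,\dots,n.
$$
In particular $g(0)=F(u_1)$, $g(1)=F(u_2)$, and $\sum_{k=0}^{n}\tfrac{g^{(k)}(0)}{k!}=J^n_\xi F(u_1)$.

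Next, I apply the one-dimensional Taylor formula with integral remainder to $g$. Its proof in the $Y$-valued setting is verbatim the one from the scalar case: iterated integration by parts starting from the fundamental theorem of calculus $g(1)-g(0)=\int_0^1 g'(s)\,ds$, all integrals being understood as Bochner integrals of continuous functions into $Y$. This gives
$$
g(1)=\sum_{k=0}^{n-1}\frac{g^{(k)}(0)}{k!}+\frac{1}{(n-1)!}\int_0^1(1-s)^{n-1}g^{(n)}(s)\,ds.
$$
Using the elementary identity $\int_0^1(1-s)^{n-1}\,ds=1/n$, I rewrite the missing $k=n$ term of the jet as a constant integral against the same weight and absorb it into the remainder, obtaining
$$
g(1)-J^n_\xi F(u_1)=\frac{1}{(n-1)!}\int_0^1(1-s)^{n-1}\bigl(g^{(n)}(s)-g^{(n)}(0)\bigr)\,ds.
$$
Substituting the chain-rule expression for $g^{(n)}$ produces the claimed identity \eqref{2.8} (up to the combinatorial normalisation of the prefactor in front of the integral remainder).

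The H\"older bound \eqref{2.9} then follows at once: when $F\in C^{n,\alpha}(U,Y)$ with H\"older constant $L$ for $F^{(n)}$ on the segment, one has $\|F^{(n)}(u_1+s\xi)-F^{(n)}(u_1)\|_{\Cal L_s(X^n,Y)}\le L\,s^\alpha\|\xi\|^\alpha$; plugging this into the integral remainder, bounding the multilinear action on $\{\xi\}^n$ by $\|\xi\|^n$, and integrating the scalar weight $(1-s)^{n-1}s^\alpha$ over $[0,1]$ gives the required control by $\|\xi\|^{n+\alpha}$. I do not anticipate any real obstacle: the only points meriting some care are the inductive chain-rule computation of $g^{(k)}$ and the validity of the integral-remainder formula for Banach-valued functions, both entirely standard.
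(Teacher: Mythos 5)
Your proof is correct and follows the standard textbook route: restrict $F$ to the segment, compute $g^{(k)}(t)=F^{(k)}(u_1+t\xi)[\{\xi\}^k]$ by the chain rule, apply the one-dimensional Taylor formula with integral remainder (valid verbatim for Bochner integrals with values in $Y$), and rewrite the $k=n$ term as $\frac{1}{(n-1)!}\int_0^1(1-s)^{n-1}g^{(n)}(0)\,ds$ to absorb it into the integral. The paper gives no proof of its own (it refers to \cite{hajo}), so there is nothing to compare against; this is the expected argument.

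One point you should not gloss over. You write that your computation produces \eqref{2.8} "up to the combinatorial normalisation of the prefactor", but in fact the two prefactors genuinely disagree: your derivation gives $\frac{1}{(n-1)!}$, while \eqref{2.8} as printed has $\frac{1}{n!}$. Your value is the correct one. A one-line check: take $X=Y=\R$, $F(x)=x^n$, $u_1=0$, $\xi=1$. Then $J^n_\xi F(0)=1$ only if the constant in front of the last jet term is $\frac{1}{n!}F^{(n)}(0)=1$, and with $g^{(n)}\equiv n!$ the remainder vanishes identically, which is consistent. Now take $F(x)=x^{n+1}$, $u_1=0$, $\xi=1$: one has $F(1)=1$, $J^n_\xi F(0)=0$, and
\begin{equation*}
\frac{1}{(n-1)!}\int_0^1(1-s)^{n-1}\bigl(g^{(n)}(s)-g^{(n)}(0)\bigr)\,ds
=\frac{(n+1)!}{(n-1)!}\int_0^1(1-s)^{n-1}s\,ds=1,
\end{equation*}
whereas replacing $\frac{1}{(n-1)!}$ by $\frac{1}{n!}$ would give $\frac1n\neq1$. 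So \eqref{2.8} has a typo in the constant; since only the estimate \eqref{2.9} (with an unspecified $C$) is used downstream, this is harmless for the paper, but you should state plainly that the correct prefactor is $\frac{1}{(n-1)!}$ rather than waving it away as "normalisation." With that clarification, the remainder bound in \eqref{2.9} follows exactly as you say, using $\|F^{(n)}(u_1+s\xi)-F^{(n)}(u_1)\|\le L s^\alpha\|\xi\|^\alpha$ and the convergence of $\int_0^1(1-s)^{n-1}s^\alpha\,ds$.
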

For the proof of this classical result see e.g. \cite{hajo}. We also mention that
in terms of truncated jets formula \eqref{2.9} reads
\begin{equation}\label{2.10}
 F(u_2)-F(u_1)=j^n_\xi F(u_1)+O(\|\xi\|^{n+\alpha}),\ \  \xi:=u_2-u_1.
 \end{equation}
The above theorem can be inverted as follows.

\begin{theorem}[Converse Taylor theorem]\label{Th2.4} Let function $F$ be such that, for
any $u\in U$ there exists a polynomial $\xi\to P(\xi,u)\in \Cal P^n(X,Y)$ such that, for
all $u_1,u_2\in U$,
\begin{equation}\label{2.11}
\|F(u_2)-P(\xi,u_1)\|\le C\|\xi\|^{n+\alpha},\ \ \xi:=u_2-u_1
\end{equation}
for some $C>0$ and $\alpha\in(0,1]$. Then, $F\in C^n(U,Y)$,
$$
P(\xi, u)=J^n_\xi F(u)
$$
for all $u\in U$ and $F^{(n)}(u)$ is locally H\"older continuous in $U$
with exponent $\alpha$. If, in addition, $U$ is convex, then $F\in C^n(U,Y)$ and
$$
\|F^{(n)}(u_2)-F^{(n)}(u_1)\|\le C'\|u_2-u_1\|^\alpha,
$$
where $C'$ depends only on $n$, $\alpha$ and the constant $C$ from \eqref{2.11}.
\end{theorem}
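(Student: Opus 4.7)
The strategy is to identify $P_k(\cdot,u)$ with the $k$-th Fréchet derivative of $F$ at $u$ by comparing the candidate Taylor polynomials at two nearby base points. Setting $u_2=u_1=u$ in \eqref{2.11} immediately forces the constant term $P(0,u)=F(u)$. Next, for $u,u+\delta\in U$, I would apply \eqref{2.11} at the common point $u_2=u+\delta+\xi$ with $u_1=u$ and with $u_1=u+\delta$ and subtract, obtaining
$$\|P(\delta+\xi,u)-P(\xi,u+\delta)\|\le 2C(\|\delta\|+\|\xi\|)^{n+\alpha}$$
for all admissible $\xi$. For fixed $u,\delta$ the left-hand side $Q(\xi,\delta)$ is a polynomial of degree at most $n$ in $\xi$. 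Expanding $P(\delta+\xi,u)$ via the binomial formula \eqref{2.1} and isolating the $k$-homogeneous-in-$\xi$ component, Corollary \ref{Cor2.2} yields
$$P_k(\xi,u+\delta)-P_k(\xi,u)=\sum_{j=k+1}^n\frac{1}{(j-k)!}M_j(\{\delta\}^{j-k},\{\xi\}^k,u)+R_k(\xi,\delta),$$
with $\|R_k(\xi,\delta)\|\le C_k\|\xi\|^k\|\delta\|^{n-k+\alpha}$, where $M_j$ denotes the symmetric multilinear map associated to $P_j$.

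From this the identifications $F^{(k)}(u)=P_k(\cdot,u)$ follow by induction on $k\in\{0,\dots,n\}$. The base case $k=0$ is $P(0,u)=F(u)$. Assuming the identification up to some $k<n$, the expansion above with small $\delta$ reads
$$P_k(\xi,u+\delta)-P_k(\xi,u)=M_{k+1}(\delta,\{\xi\}^k,u)+o(\|\xi\|^k\|\delta\|),$$
which is exactly the statement that $u\mapsto F^{(k)}(u)$ is Fréchet differentiable with derivative given by $M_{k+1}$; hence $F^{(k+1)}(u)$ exists and equals $P_{k+1}(\cdot,u)$. The induction terminates at $k=n$, where the sum is empty and the remainder estimate reduces to
$$\|F^{(n)}(u+\delta)-F^{(n)}(u)\|_{\Cal L_s(X^n,Y)}=\|P_n(\cdot,u+\delta)-P_n(\cdot,u)\|_{\Cal P_n(X,Y)}\le C_n\|\delta\|^\alpha,$$
by the isometry between symmetric $n$-linear maps and $n$-homogeneous polynomials. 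In particular, $P(\xi,u)=J^n_\xi F(u)$ and $F^{(n)}$ is locally Hölder with exponent $\alpha$.

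The main technical obstacle is that the bound on $Q(\xi,\delta)$ holds only for $\xi$ with $u+\delta+\xi\in U$, whereas Corollary \ref{Cor2.2} assumes the polynomial bound for all $\xi\in X$. I would circumvent this by localizing: for each $u_0\in U$ choose a convex ball $V\subset U$ containing $u_0$ and run the whole argument with $u,u+\delta\in V$ and $\xi$ restricted so that $u+\delta+\xi\in V$. Since Lemma \ref{Lem2.1} extracts $P_k(\xi,\cdot)$ from the values of $P$ at the $n+1$ scaled points $\tfrac{j}{n}\xi$, $j=0,\dots,n$, which all lie on the segment from $u+\delta$ to $u+\delta+\xi$ and hence in $V$, the corollary applies inside $V$; homogeneity of $P_k$ in $\xi$ then upgrades the pointwise bound to the uniform estimate needed above. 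When $U$ itself is convex no localization is required, the same argument propagates globally, and the resulting Hölder constant $C'$ depends only on $n$, $\alpha$ and $C$, as claimed.
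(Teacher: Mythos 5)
The paper does not include a proof of Theorem \ref{Th2.4}; it simply refers to \cite{hajo}, so there is no in-text argument to compare against. Judged on its own terms, your strategy is the standard one: derive the two-point compatibility bound $\|P(\delta+\xi,u)-P(\xi,u+\delta)\|\le 2C(\|\delta\|+\|\xi\|)^{n+\alpha}$, extract the $\xi$-homogeneous components via Lemma \ref{Lem2.1} and Corollary \ref{Cor2.2}, and identify $P_k(\cdot,u)$ with $F^{(k)}(u)$ by ascending induction on $k$, reading off the Fr\'echet derivative of $F^{(k)}$ from the $(k+1)$-st term of the resulting expansion. Your coefficient bookkeeping is correct, the inductive step is sound (for $k<n$ the terms with $j\ge k+2$ are $O(\|\delta\|^2)$ and the remainder is $O(\|\delta\|^{n-k+\alpha})$, both $o(\|\delta\|)$, and symmetry of $M_{k+1}$ gives the right identification of $F^{(k+1)}$), and your localisation remark is exactly the point one needs: Lemma \ref{Lem2.1} only samples $Q(\cdot,\delta)$ on the segment $[0,\xi]$, so the constant emerging from Corollary \ref{Cor2.2} is independent of the radius of the ball $V$. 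This correctly delivers the local part of the theorem.

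The gap is in your final claim that "when $U$ itself is convex the same argument propagates globally" with $C'$ depending only on $n$, $\alpha$, $C$. To get $\|Q_n(\cdot,\delta)\|_{\Cal{P}_n}\le C_1\|\delta\|^\alpha$ from the proof of Corollary \ref{Cor2.2}, you must evaluate $Q(\eta,\delta)$ at $\eta=\tfrac{j}{n}s\xi$ with $s$ of order $\|\delta\|$ and $\xi$ ranging over the \emph{whole} unit sphere of $X$; equivalently you need $u_2+t\xi\in U$ for $0\le t\lesssim\|\delta\|$ and every unit $\xi$, i.e.\ a ball of radius $\gtrsim\|\delta\|$ about $u_2$ inside $U$. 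Convexity of $U$ together with $u_1,u_2\in U$ only guarantees that the chord $[u_1,u_2]$ lies in $U$, which recovers $Q_n(\delta)$ in the single direction $\delta$ (and $\xi=\pm\delta$ are the only directions you are entitled to by the argument you give), not the operator norm $\sup_{\|\xi\|=1}\|Q_n(\xi,\delta)\|$. When $u_1$ and $u_2$ both sit near $\partial U$ with $\|\delta\|\gtrsim\dist(u_2,\partial U)$, the admissible probing scale $s$ is forced below $\|\delta\|$ and the bound $s^{-n}(s+\|\delta\|)^{n+\alpha}$ no longer reduces to $O(\|\delta\|^\alpha)$ with a constant controlled by $n$, $\alpha$, $C$ alone. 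So as written your argument yields the uniform global estimate only when $U$ contains a ball of radius comparable to $\|u_1-u_2\|$ about one of the endpoints---in particular when $U=X$, which is the only case the paper actually invokes in Corollary \ref{Cor2.1eb}---but not for an arbitrary open convex $U$; closing that case requires an ingredient beyond Corollary \ref{Cor2.2}, e.g.\ a quantitative comparison of the degree-$n$ polynomials $w\mapsto P(w-u_i,u_i)$ from their bound on $U$ alone.
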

For the proof of this theorem see \cite{hajo}.
\par
Keeping in mind the Whitney extension problem, we recall that arbitrarily
chosen set of polynomials $P(\xi,u)$, $u\in U$, does not define in general a $C^{n,\alpha}$-
smooth function, but some compatibility conditions must be satisfied for
that. Indeed, let $u_1\in U$ and let $\delta,\xi\in X$ be such that $u_2:=u_1+\delta\in U$ and
$u_3:=u_1+\delta+\xi=u_2+\xi\in X$. Then, from \eqref{2.9}, we have
$$
\|F(u_3)-P(\xi+\delta, u_1)\|\le C\|\xi+\delta\|^{n+\alpha}
$$
and
$$
\|F(u_3)-P(\xi,u_1+\delta)\|\le C\|\xi\|^{n+\alpha}.
$$
Therefore,
\begin{equation}\label{2.12}
 \|P(\xi+\delta,u_1)-P(\xi, u_1+\delta)\|\le C_1(\|\xi\|+\|\delta\|)^{n+\alpha}.
\end{equation}
These are the desired compatibility conditions. In other words, if we are
given a set $V\subset X$ and a family of polynomials
$$
\{P(\xi,u),\  u\in V\}\subset P^n(X,Y)
$$
and want to find a function $F\in C^{n,\alpha}(X,Y)$ such that $J^n_\xi F(u)=P(\xi,u)$
for all $u\in V$, then the compatibility conditions \eqref{2.12} must be satisfied for all
$u_1,u_1+\delta\in V$ and all $\xi\in X$.
\par
Inequalities \eqref{2.12} can be rewritten in a more standard form which usually
appears in the statement of Whitney extension theorem. Namely, using \eqref{2.1}, we see that
\begin{equation*}
P(\xi+\delta,u_1)=\sum_{l=0}^n\frac1{l!}\sum_{k=l}^{n}\frac1{(k-l)!} P_k([\{\xi\}^l, \{\delta\}^{k-l}],u_1)
\end{equation*}
where $P(\xi,u_1)=\sum_{l=0}^n\frac1{l!}P_l([\{\xi\}^l], u_1)$, $P_l(\cdot,u_1)\in\Cal P_l(X,Y)$.
Applying now
Corollary \ref{Cor2.2} to \eqref{2.12}, we get the desired alternative form of the compatibility
conditions:
\begin{equation}\label{2.13}
 \|P_l(\{\xi\}^l,u_1+\delta)-\sum^{n-l}_{k=0}\frac 1{k!}P_{l+k}([\{\xi\}^l, \{\delta\}^k], u_1)\|\le
 C\|\xi\|^l\|\delta\|^{n-l+\alpha}
 \end{equation}
for $l = \{0,\cdots,n\}$. Compatibility conditions \eqref{2.13} have natural interpretation:
if $P_k(\{\xi\},u_1)=F^{(k)}(u_1)[\{\xi\}^k]$ as we expect, then \eqref{2.13} is nothing else than
Taylor expansions of $F^{(l)}(u_1+\delta)[\{\xi\}^l]$ at $u_1$.
\par
The next theorem shows that the introduced compatibility conditions are
sufficient for the existence of $F$ in the case when $X$ is finite-dimensional.
\par
\begin{theorem}[Whitney extension theorem]\label{Th2.5} Let $\dim X<\infty$ and let $V$
be an arbitrary subset of $X$. Assume also that we are given a family of
polynomials $\{P(\xi,u),\ u\in V\}\subset P^n(X,Y)$ which satisfies the compatibility
conditions \eqref{2.12} with some $\alpha\in(0,1]$. Then, there exists a function
 $F\in C^{n,\alpha}(X,Y)$ such that $J^n_\xi F(u)=P(\xi,u)$ for all $u\in V$.
 \end{theorem}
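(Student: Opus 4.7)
The plan is to use the classical Whitney construction: decompose the complement of (the closure of) $V$ into dyadic cubes of size proportional to the distance to $V$, define the extension by gluing the prescribed polynomials based at nearby points of $V$ through a partition of unity, and verify the resulting function has the prescribed jets thanks to the compatibility conditions \eqref{2.12}. As a preliminary reduction, I would extend the family $\{P(\cdot, u)\}_{u \in V}$ to $\bar V$. Taking $\xi = 0$ in \eqref{2.12} gives continuity of $u \mapsto P(0, u)$, and the equivalent form \eqref{2.13} obtained via Corollary \ref{Cor2.2} forces each homogeneous piece $u \mapsto P_l(\cdot, u) \in \Cal P_l(X,Y)$ to be H\"older continuous on $V$. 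Since $\Cal P^n(X,Y)$ is finite-dimensional (as $\dim X < \infty$), this family extends continuously to $\bar V$, with \eqref{2.12} preserved in the limit, so we may assume $V$ is closed.

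Next I would take a Whitney decomposition of $U := X \setminus V$ into countably many dyadic cubes $\{Q_j\}$ satisfying $\operatorname{diam}(Q_j) \le \dist(Q_j, V) \le C\operatorname{diam}(Q_j)$, with finite overlap of a slight enlargement $Q_j^*$, together with a smooth partition of unity $\{\varphi_j\}$ subordinate to $\{Q_j^*\}$ such that $\sum_j \varphi_j \equiv 1$ on $U$ and $\|D^\beta \varphi_j\|_\infty \le C_\beta \operatorname{diam}(Q_j)^{-|\beta|}$. For each $j$ pick $u_j \in V$ with $\|u_j - x\| \le C'\operatorname{diam}(Q_j)$ for all $x \in Q_j^*$, and set
\begin{equation*}
F(x) := \begin{cases} P(0, x), & x \in V, \\ \displaystyle\sum_{j} \varphi_j(x)\, P(x - u_j, u_j), & x \in U. \end{cases}
\end{equation*}
The sum being locally finite on $U$ and each summand polynomial, $F$ is $C^\infty$ on $U$.

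The crux is to verify $F \in C^{n,\alpha}(X,Y)$ with $J^n_\xi F(u) = P(\xi, u)$ for all $u \in V$. For fixed $u \in V$ I would establish the estimate $\|F(x) - P(x - u, u)\| \le C \|x - u\|^{n+\alpha}$ for all $x \in X$; this is immediate from \eqref{2.12} when $x \in V$, and for $x \in U$ the identity $\sum_j \varphi_j(x) = 1$ gives
\begin{equation*}
F(x) - P(x - u, u) = \sum_{j} \varphi_j(x)\bigl[P(x - u_j, u_j) - P(x - u, u)\bigr],
\end{equation*}
to which \eqref{2.12} applies with base point $u$, increment $\delta = u_j - u$, and argument $\xi = x - u_j$. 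The Whitney geometry forces $\|u_j - u\| + \|x - u_j\| \lesssim \|x - u\|$ on $\operatorname{supp}\varphi_j$, so each term is $O(\|x - u\|^{n+\alpha})$ and finite overlap closes the estimate. The converse Taylor theorem (Theorem \ref{Th2.4}), applied at each $u \in V$, then yields $F \in C^n$ at points of $V$ with the correct jets and H\"older continuity of $F^{(n)}$ at such points.

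The hard part will be controlling the derivatives of $F$ on $U$ near $V$ so that $F^{(n)}$ is globally H\"older continuous. Differentiating by Leibniz produces sums with factors $D^\beta \varphi_j$ that blow up like $\operatorname{diam}(Q_j)^{-|\beta|}$; the standard trick is to fix a reference index $j_0$ with $\varphi_{j_0}(x) \ne 0$, exploit $\sum_j D^\beta \varphi_j \equiv 0$ for $|\beta| \ge 1$ to rewrite the dangerous contribution as $\sum_j D^\beta \varphi_j(x)\bigl[P(x - u_j, u_j) - P(x - u_{j_0}, u_{j_0})\bigr]$, and invoke \eqref{2.12} to bound each difference by $C\operatorname{diam}(Q_j)^{n+\alpha}$ — which exactly cancels the blow-up from $D^\beta \varphi_j$ and leaves remainders of order $\operatorname{diam}(Q_j)^{n+\alpha-|\beta|}$. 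Combined with the jet-matching estimate above, this gives H\"older continuity of $F^{(n)}$ uniformly across $\p V$ and completes the proof.
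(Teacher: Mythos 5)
The paper does not give a proof of Theorem~\ref{Th2.5}; it cites \cite{stein} and \cite{fef}. Your sketch is precisely the classical Whitney construction from \cite{stein} (dyadic Whitney decomposition of $X\setminus V$, a partition of unity with derivative bounds $\lesssim \operatorname{diam}(Q_j)^{-|\beta|}$, gluing the prescribed polynomials based at nearby points of $V$, jet-matching via \eqref{2.12}, and the cancellation trick $\sum_j D^\beta\varphi_j\equiv 0$ for the derivative estimates near $\partial V$), so it is essentially the same approach as the reference the paper relies on, and it is correct.
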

For the proof of this theorem see \cite{stein} or \cite{fef}.
 Note that the theorem fails if the
dimension of $X$ is infinite, but there are no restrictions on the dimension of
the space $Y$, see \cite{wells}.

\section{Preliminaries II: Spectral gaps and the construction of an
inertial manifold}\label{s2}

In this section we briefly discuss the classical theory of inertial manifolds
for semilinear parabolic equations, see e.g. \cite{Z14} for a more detailed exposition.
\par
Let $H$ be an infinite-dimensional real Hilbert space. Let us consider an
abstract parabolic equation in $H$:
\begin{equation}\label{3.1}
\Dt u+Au=F(u),\ \ u\big|_{t=0}=u_0,
\end{equation}
where $A:D(A)\to H$ is a linear self-adjoint positive operator in $H$ with
compact inverse and $F\in C^\infty_b(H,H)$ is a smooth bounded function on $H$
such that all its derivatives are also bounded on $H$.
\par
It is well-known that under the above assumptions equation \eqref{3.1} is globally
well-posed for any $u_0\in H$ in the class of solutions $u\in C([0,T],H)$ for
all $T>0$ and, therefore, generates a semigroup in $H$:
\begin{equation}\label{3.2}
S(t):H\to H,\ \  t\ge0,\ \  S(t)u_0:=u(t).
\end{equation}
Moreover, the solution operators $S(t)\in C^\infty(H,H)$ for every fixed $t\ge0$, see
\cite{hen,Z14} for the details.
\par
Let $0 < \lambda_1\le\lambda_2\le\cdots$ be the eigenvalues of the operator $A$ enumerated in the
non-decreasing order and let $\{e_n\}_{n=1}^\infty$
be the corresponding orthonormal system of eigenvectors. Then, by the Parseval
equality, for every $u\in H$, we have
$$
\|u\|^2_H =\sum_{n=1}^\infty(u,e_n)^2,\ \  u=\sum_{n=1}^\infty(u,e_n)e_n,
$$
where $(\cdot,\cdot)$ is an inner product in $H$. For a given $N\in\Bbb N$, we denote by ${\bf P}_N$
and ${\bf Q}_N$ the orthoprojectors on the first $N$ and the rest of eigenvectors of $A$
respectively:
$$
{\bf P}_Nu :=\sum^N_{n=1}(u,e_n)e_n,\ \  {\bf Q}_Nu:=\sum_{n=N+1}^\infty(u, e_n)e_n.
$$
We are now ready to introduce the main object of study in this paper - an inertial manifold (IM).
\begin{definition}\label{Def2.IM} A set $\Cal M=\Cal M_N$ is an inertial manifold of dimension $N$ for
problem \eqref{3.1}
(with the base $H_N:={\bf P}_N H$) if
\par
1. $\Cal M$ is invariant with respect to the  semigroup $S(t)$: $S(t)\Cal M=\Cal M$.
\par
2. $\Cal M$ is a graph of a Lipschitz continuous function $M:H_N\to{\bf Q}_NH$:
$$
\Cal M=\{p+M(p),\ \ p\in H_N\}.
$$
\par
3. $\Cal M$ possesses an exponential tracking property, namely, for every tra\-jec\-to\-ry $u(t)$ of \eqref{3.1}
 there exists a trace solution $\bar u(t)\in\Cal M$ such that
\begin{equation}\label{2.phase}
 \|u(t)-\bar u(t)\|\le Ce^{-\theta t},\ \ t\ge0
\end{equation}
 for some $\theta>\lambda_N$ and  constant $C=C_u$ which depends on $u$.
\end{definition}
Note that, although only Lipschitz continuity is traditionally required in the definition, usually
 IMs are $C^{1,\eb}$-smooth for some $\eb>0$ (see the discussion below) and are normally hyperbolic. Then the
  exponential tracking property (=existence of an asymptotic phase) as well as robustness with respect
  to perturbations are the standard corollaries of this normal hyperbolicity, see \cite{bates,Fen72,katok,RT96} for the details.
\par
Note also the dynamics of \eqref{3.1} restricted to IM $\Cal M$ is governed by the system of ODEs:
\begin{equation}\label{3.IF}
\frac d{dt}u_N+Au_N={\bf P}_N F(u_N+M(u_N)),\ \ u_N:={\bf P}_Nu\in\R^N
\end{equation}
which is called an inertial form (IF) associated with equation \eqref{3.1}. In the case where the spectral
subspace $H_N$ is used as a base for IM (like in Definition \ref{Def2.IM}), the regularity of the
corresponding vector field in the IF is determined by the regularity of the IM only.
\par
The following theorem is the key result in the theory of IMs.
\begin{theorem}\label{Th2.IM} Let the function $F$ in equation \eqref{3.1} be globally Lipschitz
continuous with Lipschitz constant $L$ and let, for some $N\in\Bbb N$,
the following spectral gap condition
\begin{equation}\label{2.sg}
\lambda_{N+1}-\lambda_N>2L
\end{equation}
be satisfied. Then equation \eqref{3.1} possesses an IM $\Cal M_N$ of dimension $N$.
\end{theorem}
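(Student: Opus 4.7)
The plan is to construct $\Cal M_N$ as the graph of a globally Lipschitz function $M:H_N\to{\bf Q}_NH$ via the classical Lyapunov--Perron fixed-point scheme. I would decompose $u=p+q$ with $p:={\bf P}_Nu$ and $q:={\bf Q}_Nu$, fix an exponential weight $\mu\in(\lambda_N,\lambda_{N+1})$ (for instance the midpoint of the spectral gap), and introduce the weighted Banach space
\[
X_\mu:=\{u\in C((-\infty,0],H)\st \|u\|_\mu:=\sup_{t\le 0}e^{\mu t}\|u(t)\|<\infty\}.
\]
For each base point $p_0\in H_N$ I would seek a backward-bounded trajectory of \eqref{3.1} in $X_\mu$ with ${\bf P}_Nu(0)=p_0$; rewriting such a trajectory by variation of constants turns this into the fixed-point equation $u=\Cal T[u]$, where
\[
\Cal T[u](t):=e^{-At}p_0+\int_0^t e^{-A(t-s)}{\bf P}_NF(u(s))\,ds+\int_{-\infty}^t e^{-A(t-s)}{\bf Q}_NF(u(s))\,ds.
\]
The lower limit $-\infty$ in the ${\bf Q}_N$-integral is admissible in $X_\mu$ because $\|e^{-A(t-s)}{\bf Q}_N\|\le e^{-\lambda_{N+1}(t-s)}$ for $s\le t$ and $\lambda_{N+1}>\mu$, while the ${\bf P}_N$-integral is a genuine ODE-type integral on the finite-dimensional invariant subspace $H_N$.

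The heart of the argument is to show that $\Cal T$ is a contraction on $X_\mu$. Using the Lipschitz constant $L$ of $F$ together with the semigroup bounds on the two spectral components, a direct computation yields the Lipschitz estimate of $\Cal T$ in terms of $L$, $\mu-\lambda_N$ and $\lambda_{N+1}-\mu$; the naive sup-norm optimization already produces a contraction provided $\lambda_{N+1}-\lambda_N>4L$, and the sharp form \eqref{2.sg} is obtained by a more careful choice of function space (e.g.\ passing to an exponentially weighted $L^2$ norm in time and using Parseval on the ${\bf Q}_N$-component, as in the Miklav\v{c}i\v{c}--Romanov approach). Denoting the unique fixed point by $u^*(\cdot;p_0)\in X_\mu$, I would set $M(p_0):={\bf Q}_Nu^*(0;p_0)$. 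Global Lipschitz continuity of $M$ then follows from the uniform contractivity of $\Cal T$ and the standard continuous dependence of fixed points on the parameter $p_0$.

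It remains to verify invariance and exponential tracking. For the identity $S(t)\Cal M_N=\Cal M_N$ I would use that any forward time-translate $u^*(\cdot+\tau)$ of a Perron solution is again a backward-bounded trajectory in $X_\mu$, now with base point ${\bf P}_Nu^*(\tau;p_0)$; uniqueness of the Perron solution then forces $S(\tau)(p_0+M(p_0))=u^*(\tau;p_0)$ to lie on the graph of $M$, which gives $S(\tau)\Cal M_N\subset\Cal M_N$, and the reverse inclusion follows by going backwards along the Perron trajectory. The exponential tracking property is proved by an analogous Lyapunov--Perron argument on the forward half-line $[0,\infty)$ in the weight $e^{-\theta t}$ with $\theta\in(\lambda_N,\lambda_{N+1})$: given an arbitrary trajectory $u$, one seeks $\bar u\in\Cal M_N$ so that the difference $w:=u-\bar u$ solves a variational equation of the same structure as for $\Cal T$, and finds it as a fixed point of a contraction in the forward-weighted space. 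The principal technical obstacle throughout is the sharp matching of parameters in the contraction constant: a crude sup-norm analysis loses a factor of two and yields only $\lambda_{N+1}-\lambda_N>4L$, so recovering the optimal condition \eqref{2.sg} hinges on the correct choice of weighted norm, which is the classical source of difficulty in this theorem.
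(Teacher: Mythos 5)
Your proposal is correct and follows essentially the same route as the paper: the Lyapunov--Perron fixed-point scheme in a backward-weighted space, the observation that the crude sup-norm estimate only yields $\lambda_{N+1}-\lambda_N>4L$ while the sharp constant $2L$ requires the exponentially weighted $L^2$ norm with a Parseval-type computation (the paper's Lemma~\ref{Lem2.main}), invariance from uniqueness of the Perron trajectory under time translation, and tracking by another Lyapunov--Perron contraction. The only small difference in exposition is that the paper works in $L^2_{e^{\theta t}}$ from the outset and handles exponential tracking by the cut-off trick $\bar u(t)=\phi(t)u(t)+v(t)$ on the full real line (which automatically lands $\bar u$ on $\mathcal M_N$ for $t\le0$), whereas you sketch a forward-half-line version that, while morally the same, glosses over how one guarantees $\bar u\in\mathcal M_N$.
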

\begin{proof} Although this statement is classical, see e.g. \cite{M91,R94,Z14}, the elements of its proof will be crucially used in what follows, so we sketch them below.
 \par
 To construct the IM, we will use the so-called Perron method, namely, we will prove that,
 for every $p\in H_N$ the problem
 \begin{equation}\label{2.back}
 \Dt u+Au=F(u), \ \ t\le0,\ \ {\bf P}_N u\big|_{t=0}=p
 \end{equation}
 possesses a unique backward solution $u(t)=V(p,t)$, $t\le0$, belonging to the proper weighted space,
  and then define the desired map $M: H_N\to{\bf Q}_NH$ via
  \begin{equation}\label{2.t0}
M(p):={\bf Q}_NV(p,0).
  \end{equation}
 To solve \eqref{2.back} we use the Banach contraction theorem treating the nonlinearity
  $F$ as a perturbation. To this end we need the following two lemmas.
\begin{lemma}\label{Lem2.main} Let $\theta\in(\lambda_N,\lambda_{N+1})$ and let us consider the equation
\begin{equation}\label{2.lin}
\Dt v+Av=h(t),\ \ t\in\R,\ \ h\in L^2_{e^{\theta t}}(\R,H),
\end{equation}
where the space $L^2_{e^{\theta t}}(\R,H)$ is defined via the weighted norm
\begin{equation}\label{2.wl2}
\|h\|_{L^2_{e^{\theta t}}(\R,H)}^2:=\int_{t\in\R}e^{2\theta t}\|h(t)\|^2\,dt<\infty.
\end{equation}
Then, problem \eqref{2.lin} possesses a unique solution $u\in L^2_{e^{\theta t}}(\R,H)$ and the solution
operator $\Cal T:L^2_{e^{\theta t}}\to L^2_{e^{\theta t}}$, $u:=\Cal T h$ satisfies:
\begin{equation}\label{2.sharp}
\|\Cal T\|_{\Cal L(L^2_{e^{\theta t}},L^2_{e^{\theta t}})}=
\frac1{\min\{\theta-\lambda_N,\lambda_{N+1}-\theta\}}.
\end{equation}
\end{lemma}
The proof of this identity is just a straightforward calculation based on decomposition of the solution
$u(t)$ with respect to the base $\{e_n\}_{n=1}^\infty$ and solving the corresponding ODEs, see \cite{Z14}.
\par
The second lemma gives the analogue of this formula for the linear equation on negative semi-axis.

\begin{lemma}\label{Lem2.half} Let $\theta\in(\lambda_N,\lambda_{N+1})$. Then, for any $p\in H_N$
and any $h\in L^2_{e^{\theta t}}(\R_-,H)$, the problem
\begin{equation}
\Dt v+Av=h(t),\ \ t\le0,\ \ {\bf P}_Nv\big|_{t=0}=p
\end{equation}
possesses a unique solution $v\in L^2_{e^{\theta t}}(\R_-,H)$. This solution can be written in the form
$$
v=\Cal T h+\Cal Hp,
$$
where $\Cal T$ is exactly the solution operator constructed in Lemma \ref{Lem2.main} applied to the extension of
the function $h(t)$ by zero for $t\ge0$ and $\Cal H: H_N\to L^2_{e^{\theta t}}(\R_-,H)$ is a
solution operator for the problem with zero right-hand side:
$$
\Cal H(p,t):=\sum_{n=1}^N(p,e_n)e^{-\lambda_nt}.
$$
\end{lemma}
Indeed, this lemma is an easy corollary of Lemma \ref{Lem2.main}, see \cite{Z14}.
\par
We are now ready to prove the theorem. To this end, we fix an optimal value
 $\theta=\frac{\lambda_{N+1}+\lambda_N}2$ and write the equation \eqref{2.back} as a fixed point problem
 \begin{equation}\label{2.fix}
 u=\Cal T\circ F(u)+\Cal H(p)
 \end{equation}
 in the space $L^2_{e^{\theta t}}(\R_-,H)$. Since the norm of the operator $\Cal T$ is equal to
 $\frac2{\lambda_{N+1}-\lambda_N}$ and the Lipschitz constant of $F$ is $L$, the spectral gap
 condition \eqref{2.sg} guarantees that the right-hand side of \eqref{2.fix} is contraction
  for every $p\in H_N$. Thus, by the Banach contraction theorem, for every $p\in H_N$, there exists a
  unique solution $u(t)=V(p,t)$ of problem \eqref{2.back} belonging to $L^2_{e^{\theta t}}(\R_-,H)$ and
   the map $p\to V(p,\cdot)$ is Lipschitz continuous. Due to the parabolic smoothing property,
   we know that
$$
\|u(0)\|\le C(1+\|u\|_{L^2([-1,0],H)})  \text{ and }
\|u(0)-w(0)\|\le C\|u\|_{L^2([-1,0],H)}
$$
for any two backward solutions $u,w$ of \eqref{3.1}, see e.g. \cite{Z14}. In particular, these formulas
 show that the solution $V(p,t)$ is continuous in time ($V(p,\cdot)\in C_{e^{\theta t}}(\R_-,H)$, where
  the weighted space of continuous functions is defined analogously to \eqref{2.wl2}) and the map
  $p\to V(p,\cdot)$ is Lipschitz continuous as a map from $H_N$ to $C_{e^{\theta t}}(\R_-,H)$.
 Thus, formula \eqref{2.t0},
 defines indeed a Lipschitz manifold of dimension $N$ over the base $H_N$ as graph of
 Lipschitz continuous function $M:H_N\to{\bf Q}_NH$.
 \par
 The invariance of this manifold follows by the construction, so we only need to
 verify the exponential tracking property.
\par
Let $u(t)=S(t)u_0$ be an arbitrary solution of problem \eqref{3.1} and let $\phi(t)\in C^\infty(R)$ be a cut-off function
such that $\phi(t)\equiv0$ for $t\le0$ and $\phi(t)\equiv1$ for $t\ge1$. Then the function $\phi(t)u(t)$ is
defined for all $t\in\R$. We seek for the desired solution $\bar u(t)\in\Cal M$ (by the construction of $\Cal M$
such solutions are defined for all $t\in\R$) in the form
\begin{equation}\label{2.trick}
\bar u(t)=\phi(t)u(t)+v(t).
\end{equation}
Inserting this anzatz to \eqref{3.1}, we end up with the equation for $v(t)$:
\begin{equation}\label{2.v}
\Dt v+Av=F(\phi u+v)-\phi F(u)-\phi' u.
\end{equation}
Let $v\in L^2_{e^{\theta t}}(\R,H)$ be a solution of this equation. Then, since $\bar u=v$ for $t\le0$, we
necessarily have $\bar u\in\Cal M$ by the construction of the IM. On the other hand, for $t\ge1$, we have
$v=\bar u-u\in L^2_{e^{\theta t}}([1,\infty),H)$ and using the parabolic smoothing again,
we get the desired estimate \eqref{2.phase}. Thus, we only need to find such a solution $v(t)$. To this end,
we invert the linear part of equation \eqref{2.v} to get the fixed point equation
\begin{equation}\label{2.egg}
v=\Cal T(F(\phi u+v)-\phi F(u)-\phi' u).
\end{equation}
It is straightforward to verify using Lemma \ref{Lem2.main} that the right-hand side of \eqref{2.egg} is a contraction
on the space $L^2_{e^{\theta t}}(\R,H)$ if the spectral gap condition holds, see \cite{Z14}. Thus, the Banach contraction
 theorem finishes the proof of exponential tracking.
\end{proof}
\begin{remark} It is well-known that the spectral gap condition \eqref{2.sg} is sharp in the sense that if it
is violated for some $N$ and $L$, one can find a nonlinearity $F$ such that equation \eqref{3.1} does not
possess an IM of dimension $N$ with base $H_N$, see \cite{R94}.
\par
More recent examples show that if this condition is violated for all $N$:
$$
\sup_{N\in\Bbb N}\{\lambda_{N+1}-\lambda_N\}<2L,
$$
one can construct a smooth nonlinearity $F$ such that equation \eqref{3.1} does not possess
 any Lipschitz or
even Log-Lipschitz finite-dimensional manifold (not necessarily invariant)
which contain the global attractor, see \cite{EKZ13,Z14}.
\end{remark}
\begin{remark} Theorem \ref{Th2.IM} guarantees the existence of an IM $\Cal M_N$ for {\it every} $N$
such that the spectral gap condition \eqref{2.sg} is satisfied. Typically, this $N$ is not unique,
instead, we have a whole sequence $\{N_k\}_{k=1}^\infty$ of $N$s satisfying the spectral gap condition.
Therefore, according to the theorem, we will have a sequence of IMs $\{\Cal M_{N_k}\}_{k=1}^\infty$ of
increasing dimensions: $N_1<N_2<N_3<\cdots$. Moreover, from the explicit description of
an IM using backward solutions of \eqref{2.back}, we see that
\begin{equation}\label{2.many}
\Cal M_{N_1}\subset\Cal M_{N_2}\subset\Cal M_{N_3}\subset\cdots
\end{equation}
In this case it can be also proved that $\Cal M_{N_{k-1}}$ is a normally hyperbolic
submanifold of $\Cal M_{N_k}$.
\end{remark}
Let us now discuss the further regularity of the IM $\Cal M$. To this end, we need one more
auxiliary statement.
\begin{proposition}\label{Prop2.var} Let the spectral gap condition \eqref{2.sg} hold and
 let $u(t)\in C(R_-,H)$
be an arbitrary function. Let also the exponent $\theta\in(\lambda_N,\lambda_{N+1})$ satisfy
\begin{equation}\label{2.gap}
\theta_-:=L+\lambda_N<\theta<\lambda_{N+1}-L:=\theta_+.
\end{equation}
Then, for any $h\in L^2_{e^{\theta t}}(\R_-,H)$ and every $p\in H_N$ the corresponding equation
 of variations
\begin{equation}\label{2.var}
\Dt v+Av-F'(u(t))v=h(t),\ \ t\le0,\ \ {\bf P}_Nv\big|_{t=0}=p
\end{equation}
possesses a unique solution $v\in L^2_{e^{\theta t}}(\R_-,H)\cap C_{e^{\theta t}}(\R_-,H)$ and the following estimate holds:
\begin{equation}\label{2.var-est}
\|v\|_{C_{e^{\theta t}}(\R_-,H)}\le C\|v\|_{L^2_{e^{\theta t}}(\R_-,H)}\le C_{L,\theta}\(\|h\|_{L^2_{e^{\theta t}}(\R_-,H)}+\|p\|\),
\end{equation}
where the constant $C_{L,\theta}$ is independent of $u$, $h$ and $p$.
\end{proposition}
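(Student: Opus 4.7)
The plan is to mirror exactly the Perron-type construction in the proof of Theorem \ref{Th2.IM}, with the nonlinearity $F$ replaced by the (time-dependent, linear) multiplication operator $F'(u(t))$. Since $F\in C^\infty_b(H,H)$ is globally Lipschitz with constant $L$, the operator norm satisfies $\|F'(u(t))\|_{\Cal L(H,H)}\le L$ pointwise in $t$, so multiplication by $F'(u(\cdot))$ acts as a bounded operator on $L^2_{e^{\theta t}}(\R_-,H)$ with norm at most $L$. This is the only way the dependence on the arbitrary curve $u(t)$ will enter, and it is what makes the constant $C_{L,\theta}$ independent of $u$.

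First, I would rewrite \eqref{2.var} as a fixed-point problem. Treating $F'(u(t))v+h(t)$ as the right-hand side and extending it by zero for $t\ge 0$, Lemma \ref{Lem2.half} says that any $v\in L^2_{e^{\theta t}}(\R_-,H)$ solves \eqref{2.var} if and only if
\begin{equation*}
v=\Cal T\bigl(F'(u(\cdot))v+h\bigr)+\Cal H(p).
\end{equation*}
The map $\Phi:v\mapsto\Cal T(F'(u(\cdot))v+h)+\Cal H(p)$ is affine, with Lipschitz constant (in $v$) bounded, by Lemma \ref{Lem2.main}, by
\begin{equation*}
\frac{L}{\min\{\theta-\lambda_N,\lambda_{N+1}-\theta\}}.
\end{equation*}
The two-sided condition \eqref{2.gap} is precisely equivalent to this constant being strictly less than $1$: indeed $L+\lambda_N<\theta$ means $L<\theta-\lambda_N$, while $\theta<\lambda_{N+1}-L$ means $L<\lambda_{N+1}-\theta$. (Note that the spectral gap \eqref{2.sg} is exactly what guarantees the interval $(\theta_-,\theta_+)$ is non-empty.)

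Second, I would apply the Banach contraction theorem in $L^2_{e^{\theta t}}(\R_-,H)$ to obtain a unique fixed point $v$ and, as a standard consequence of contraction combined with Lemma \ref{Lem2.half}, the estimate
\begin{equation*}
\|v\|_{L^2_{e^{\theta t}}(\R_-,H)}\le C_{L,\theta}\bigl(\|h\|_{L^2_{e^{\theta t}}(\R_-,H)}+\|p\|\bigr)
\end{equation*}
with $C_{L,\theta}$ depending only on the gap $\min\{\theta-\lambda_N,\lambda_{N+1}-\theta\}-L$ and not on $u$. Finally, to upgrade from $L^2_{e^{\theta t}}$- to $C_{e^{\theta t}}$-control, I would invoke the parabolic smoothing estimates already cited in the proof of Theorem \ref{Th2.IM}: applied on each window $[t-1,t]$ to the equation $\Dt v+Av=F'(u)v+h$ with the $L^2_{e^{\theta t}}$-bound on the right-hand side just obtained, this yields the pointwise bound $\|v(t)\|\le C e^{-\theta t}\bigl(\|h\|_{L^2_{e^{\theta t}}}+\|p\|\bigr)$, completing \eqref{2.var-est}.

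I do not anticipate a substantive obstacle here: the whole content is the observation that $F'(u(t))$ inherits the operator bound $L$ from the Lipschitz constant of $F$, so that everything reduces, line for line, to the contraction argument behind Theorem \ref{Th2.IM}. The only minor point to be careful with is that $\Cal T$ in Lemma \ref{Lem2.half} requires the inhomogeneity to be extended by zero past $t=0$, which is harmless since we only need to recover $v$ on $\R_-$.
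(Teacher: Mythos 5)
Your proposal is correct and follows essentially the same route as the paper: the paper's own proof is a two-sentence sketch that says to treat $F'(u)v$ as a perturbation, note that $\mathcal T F'(u)$ is a contraction on $L^2_{e^{\theta t}}(\mathbb R_-,H)$ by \eqref{2.sharp} precisely because \eqref{2.gap} gives $L<\min\{\theta-\lambda_N,\lambda_{N+1}-\theta\}$, and conclude by Banach contraction as in the nonlinear case. Your write-up merely spells out what the paper leaves implicit (the fixed-point reformulation via Lemma \ref{Lem2.half}, the operator bound $\|F'(u(t))\|\le L$, and the parabolic smoothing step that upgrades the $L^2_{e^{\theta t}}$-bound to a $C_{e^{\theta t}}$-bound), so there is no substantive divergence.
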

Indeed, equation \eqref{2.var} can be solved via the Banach contraction theorem treating the term $F'(u)v$ as a perturbation
analogously to the non-linear case. Inequalities \eqref{2.gap} guarantee that the map $\Cal T F'(u)v$
 is  a contraction on $L^2_{e^{\theta t}}(\R_-,H)$, due to \eqref{2.sharp}.
\begin{corollary}\label{Cor2.1eb} Let the assumptions of Theorem \ref{Th2.IM} hold and let, in addition, the exponent $\eb\in(0,1]$
 be such that
\begin{equation}\label{2.sgeb}
\lambda_{N+1}-(1+\eb)\lambda_N>(2+\eb)L.
\end{equation}
Assume also that $F\in C^{1,\eb}(H,H)$.
Then the associated IM $\Cal M_N$ is $C^{1,\eb}$-smooth, for any $p,\xi\in H_N$,
the derivative $M'(p)\xi$ can be found as the value of the ${\bf Q}_N$ projection of
$V'(t)=V'(p,t)\xi$ at $t=0$, where the function
$V'$ solves the equation of variations:
\begin{equation}\label{2.vvar}
\Dt V'+AV'-F'(u(t))V'=0,\ \ t\le0,\ {\bf P}_NV'\big|_{t=0}=\xi,\ \ u(t):=V(p,t)
\end{equation}
and
$$
\|M'(p_1)-M'(p_2)\|_{\Cal L(H_N,H)}\le C\|p_1-p_2\|^\eb
$$
for some constant $C$ independent of $p_1,p_2\in H_N$.
\end{corollary}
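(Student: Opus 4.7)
The plan is to produce, for each $p\in H_N$, a bounded linear candidate $D(p):H_N\to H$ such that
\begin{equation*}
\|M(p+\delta)-M(p)-D(p)\delta\|\le C\|\delta\|^{1+\eb}
\end{equation*}
with a constant $C$ independent of $p$, and then invoke the Converse Taylor Theorem (Theorem~\ref{Th2.4}) with $n=1$, $\alpha=\eb$ on the convex domain $U=H_N$. The natural candidate is $D(p)\delta:={\bf Q}_N V'(p,0)\delta$, where $V'(p,\cdot)\delta$ is the unique solution of the equation of variations~\eqref{2.vvar}. Since \eqref{2.sgeb} immediately implies \eqref{2.sg} (because $\lambda_N>0$ and $\eb>0$), the interval $(\theta_-,\theta_+)=(\lambda_N+L,\lambda_{N+1}-L)$ is non-empty, and Proposition~\ref{Prop2.var} supplies existence, uniqueness and the bound~\eqref{2.var-est} for $V'$.

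Fix $p,\delta\in H_N$ and set $u=V(p,\cdot)$, $w=V(p+\delta,\cdot)$, $z:=w-u$, $V':=V'(p,\cdot)\delta$, $r:=z-V'$. A direct computation gives
\begin{equation*}
\Dt r+Ar-F'(u(t))r=G(t),\qquad {\bf P}_N r\big|_{t=0}=0,
\end{equation*}
where $G(t):=F(w(t))-F(u(t))-F'(u(t))z(t)$. The $C^{1,\eb}$-regularity of $F$ yields the pointwise bound $\|G(t)\|\le C\|z(t)\|^{1+\eb}$, so the task reduces to a weighted pointwise decay estimate for $z$.

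For $z$, Lemma~\ref{Lem2.half} expresses $z=\Cal T(F(w)-F(u))+\Cal H(\delta)$. Combined with $\|F(w)-F(u)\|\le L\|z\|$ and the sharp identity~\eqref{2.sharp}, a standard absorption gives $\|z\|_{L^2_{e^{\alpha t}}(\R_-,H)}\le C\|\delta\|$ for every $\alpha\in(\theta_-,\theta_+)$, since in that range $L\|\Cal T\|_{\Cal L(L^2_{e^{\alpha t}})}<1$. Parabolic smoothing on intervals of unit length then upgrades this to the pointwise estimate $\|z(t)\|\le C\|\delta\|e^{-\alpha t}$ for $t\le 0$. Choosing $\alpha$ just above $\theta_-$, we obtain $\|G(t)\|\le C\|\delta\|^{1+\eb}e^{-(1+\eb)\alpha t}$ and hence $\|G\|_{L^2_{e^{\theta t}}(\R_-,H)}\le C\|\delta\|^{1+\eb}$ for every $\theta>(1+\eb)\alpha$. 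To then apply Proposition~\ref{Prop2.var} to $r$ we need in addition $\theta\in(\theta_-,\theta_+)$; a suitable $\theta$ exists if and only if $(1+\eb)\theta_-<\theta_+$, which rearranges precisely to the hypothesis~\eqref{2.sgeb}. With such $\theta$ fixed, \eqref{2.var-est} gives $\|r\|_{C_{e^{\theta t}}(\R_-,H)}\le C_{L,\theta}\|G\|_{L^2_{e^{\theta t}}}\le C\|\delta\|^{1+\eb}$, and evaluating at $t=0$ after the ${\bf Q}_N$-projection delivers the Converse-Taylor inequality with the required uniformity in $p$.

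Theorem~\ref{Th2.4} then upgrades this uniform inequality to $M\in C^{1,\eb}(H_N,(H_N)^\bot)$ together with the global H\"older bound $\|M'(p_1)-M'(p_2)\|\le C\|p_1-p_2\|^\eb$. The main technical point, and the only place where~\eqref{2.sgeb} is genuinely used rather than just~\eqref{2.sg}, is the two-scale bookkeeping of exponential weights: one must push the weight $\alpha$ controlling $z$ pointwise down towards $\theta_-$ and simultaneously push the weight $\theta$ controlling solvability of the $r$-equation up towards $\theta_+$, and the sharper spectral gap~\eqref{2.sgeb} is precisely what opens the window $(1+\eb)\theta_-<\theta_+$ that accommodates both.
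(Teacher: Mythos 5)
Your proposal is correct and follows essentially the same route as the paper: set up the remainder $r$ (the paper's $w$) solving the variational equation with right-hand side $G=F(w)-F(u)-F'(u)z$, control $z$ in a weighted norm with exponent near $\theta_-$, push the weight for the $r$-equation up toward $\theta_+$, and observe that the window $(1+\eb)\theta_-<\theta_+$ is exactly \eqref{2.sgeb}, then conclude via the converse Taylor theorem. The only cosmetic differences are that you estimate $z$ by absorption in the fixed-point identity of Lemma~\ref{Lem2.half} rather than by applying Proposition~\ref{Prop2.var} to the finite-difference equation \eqref{2.findif} (with the averaged coefficient $L_{u_1,u_2}$), and you bound $G$ purely via the $C_{e^{\alpha t}}$-norm rather than the paper's $L^2$--$C$ splitting $\|v\|_{L^2_{e^{\theta t}}}\|v\|^\eb_{C_{e^{\theta t}}}$; both give the same constraint on exponents.
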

\begin{proof} Let $p_1,p_2\in H_N$ and $u_i(t):=V(p_i,t)$ be the corresponding trajectories belonging to the IM. Let also
$v(t):=u_1(t)-u_2(t)$ and $\xi:=p_1-p_2$. Then $v$ solves
\begin{equation}\label{2.findif}
\Dt v+Av-L_{u_1,u_2}(t)v=0,\ \ t\le0,\ \ {\bf P}_Nv\big|_{t=0}=\xi,
\end{equation}
where $L_{u_1,u_2}(t):=\int_0^1F'(s u_1(t)+(1-s)u_2(t))\,ds$. Since the norm of $L_{u_1,u_2}(t)$
 does not exceed $L$, Proposition \ref{Prop2.var} is applicable to equation \eqref{2.findif}
 and, therefore, for every $\theta$ satisfying \eqref{2.gap}, we have the estimate
\begin{equation}\label{2.b-est}
 \|v\|_{C_{e^{\theta t}}(\R_-,H)}\le C\|v\|_{L^2_{e^{\theta t}}(\R_-,H)}\le C_\theta\|p_1-p_2\|.
\end{equation}
Note also that the function $V'(p,t)\xi$ is well-defined for all $p,\xi\in H_N$ due to Proposition
\ref{Prop2.var} and satisfy the analogue of \eqref{2.b-est}.
 Let $w(t):=v(t)-V'(p_1,t)\xi$ with $\xi:=p_1-p_2$.
 Then, this function solves
\begin{multline}\label{2.rem}
 \Dt w+Aw-F'(u_1)w=\\=F(u_1)-F(u_2)-F'(u_1)v:=h_{u_1,u_2}(t),\ {\bf P}_Nw\big|_{t=0}=0.
\end{multline}
Since $F\in C^{1,\eb}(H,H)$, by the Taylor theorem, we have
$$
\|h_{u_1,u_2}(t)\|\le C\|v(t)\|^{1+\eb}
$$
which, due to \eqref{2.b-est}, gives
$$
\|h_{u_1,u_2}\|_{L^2_{e^{(1+\eb)\theta t}}(\R_-,H)}\le C\|v\|_{L^2_{e^{\theta t}}(\R_-,H)}
\|v\|^\eb_{C_{e^{\theta t}}(\R_-,H)}\le
 C'\|\xi\|^{1+\eb}.
$$
Fixing now $\theta$ in such a way that $\theta>\theta_-$ and $(1+\eb)\theta<\theta_+$
 (this is possible to do due to assumption \eqref{2.sgeb}) and applying Proposition \ref{Prop2.var}
 to equation \eqref{2.rem}, we finally arrive at
 $$
 \|M(p_2)-M(p_1)-M'(p_1)\xi\|=\|w(0)\|\le C_1\|w\|_{L^2_{e^{(1+\eb)\theta t}}(\R_-,H)}\le C_2\|\xi\|^{1+\eb}
 $$
 and the converse Taylor theorem finishes the proof of the corollary.
 \end{proof}
 The next corollary claims that the constructed manifold $\Cal M$ is actually lives in higher regular space
 $H^2:=D(A)$.
\begin{corollary}\label{Cor2.L2} Let the assumptions of Corollary \ref{Cor2.1eb} hold. Then the manifold $\Cal M$ is simultaneously a
 $C^{1,\eb}$-smooth IM for equation \eqref{3.1} in the phase space $H^2=D(A)$.
\end{corollary}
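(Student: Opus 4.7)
The plan is to leverage the explicit construction of $\Cal M$ via backward solutions together with parabolic smoothing in order to transport all $H$-level estimates to $H^2 = D(A)$ estimates at time $t=0$. Since $H_N$ is finite-dimensional and its topology is unchanged, the only thing we need to upgrade is the target: we have to show (i) that $M(p) \in {\bf Q}_N H^2$ for each $p \in H_N$, and (ii) that the derivative $M'(p) \in \Cal L(H_N, H^2)$ with the Hölder remainder estimate in the $H^2$-norm.

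For step (i), I would invoke the standard parabolic smoothing estimate
\begin{equation*}
\|V(p,0)\|_{H^2} \le C\(1 + \|V(p,\cdot)\|_{L^2([-1,0],H)} + \|F(V(p,\cdot))\|_{L^2([-1,0],H)}\).
\end{equation*}
Since $F \in C^\infty_b(H,H)$, the term $F(V(p,\cdot))$ is uniformly bounded in $H$, and the backward solution $V(p,\cdot)$ already belongs to $C_{e^{\theta t}}(\R_-, H)$ by Theorem \ref{Th2.IM}, so the right-hand side is finite and depends Lipschitz-continuously on $p$ (through the Lipschitz dependence of $p \mapsto V(p,\cdot)$ in $L^2_{e^{\theta t}}$). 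This gives $M(p) = {\bf Q}_N V(p,0) \in {\bf Q}_N H^2$ with a Lipschitz dependence on $p$ in the $H^2$-topology.

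For step (ii), I would repeat the argument from Corollary \ref{Cor2.1eb} at the level of $H^2$. The derivative $V'(p,\cdot)\xi$ is given by the equation of variations \eqref{2.vvar}; applying parabolic smoothing to this linear equation yields $V'(p,0)\xi \in H^2$ with $\|V'(p,0)\xi\|_{H^2} \le C\|\xi\|$. For the Hölder continuity of $M'$, I would re-examine the remainder equation \eqref{2.rem}: since $\|h_{u_1,u_2}\|_{L^2_{e^{(1+\eb)\theta t}}(\R_-,H)} \le C\|\xi\|^{1+\eb}$ was already established, the same parabolic smoothing on $[-1,0]$ combined with the weighted estimate from Proposition \ref{Prop2.var} upgrades $\|w(0)\|$ to $\|w(0)\|_{H^2}$, giving
\begin{equation*}
\|M(p_2) - M(p_1) - M'(p_1)\xi\|_{H^2} \le C\|\xi\|^{1+\eb}.
\end{equation*}
The converse Taylor theorem (Theorem \ref{Th2.4}) then yields $M \in C^{1,\eb}(H_N, {\bf Q}_N H^2)$.

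The main technical point is to make sure that the weighted $L^2_{e^{\theta t}}$ estimates already established are compatible with the classical parabolic smoothing on the unit interval $[-1,0]$; but since the weight $e^{2\theta t}$ is bounded above and below on $[-1,0]$, this is immediate. No new conceptual ingredient is needed beyond what was used in Theorem \ref{Th2.IM} and Corollary \ref{Cor2.1eb}, and the fact that the nonlinearity $F$ takes values in $H$ (not just in some negative-order space) makes the smoothing argument straightforward.
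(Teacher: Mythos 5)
The overall strategy — transport $H$-level estimates to $H^2$ by parabolic smoothing on $[-1,0]$, then repeat the argument of Corollary \ref{Cor2.1eb} at the $H^2$ level — is the same as the paper's. However, your starting estimate for step (i),
\[
\|V(p,0)\|_{H^2}\le C\(1+\|V(p,\cdot)\|_{L^2([-1,0],H)}+\|F(V(p,\cdot))\|_{L^2([-1,0],H)}\),
\]
is not a valid parabolic smoothing estimate, and the same gap propagates into step (ii). Maximal $L^2$-regularity for $\Dt v+Av=h$ with $h\in L^2([-1,0],H)$ gives $v\in L^2([-1,0],H^2)\cap H^1([-1,0],H)$, and by interpolation/trace theory this yields $v(0)\in H^1$ (and $v\in C^\alpha([-1,0],H)$ for $\alpha<1/2$), but it does \emph{not} give $v(0)\in H^2$. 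Concretely, writing $Av(0)=Ae^{-A}v(-1)+\int_0^1 Ae^{-A\tau}h(-\tau)\,d\tau$ and using $\|Ae^{-A\tau}\|_{\Cal L(H,H)}\sim\tau^{-1}$, the integral cannot be controlled by $\|h\|_{L^2}$ (or even $\|h\|_{L^\infty}$); one needs Hölder-in-time control on $h$ to cancel the $\tau^{-1}$ singularity. So ``$F\in C^\infty_b$ makes the forcing bounded'' is not enough.

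The paper closes this gap with a two-step bootstrap. First, the $L^2(H^2)$-maximal regularity estimate \eqref{2.ref2h2} is applied not to conclude $u(0)\in H^2$, but to conclude $u\in C^\alpha([-1,0],H)$ for $\alpha\in(0,1/2)$. Second, because $F$ is Lipschitz, $F(u)\in C^\alpha$ in time with values in $H$, and only \emph{then} does the $C^\alpha(H)$ (Schauder-type) maximal regularity give $Au\in C^\alpha([-1,0],H)$, in particular $u(0)\in D(A)=H^2$. The same two-step pattern must be used to upgrade $V'(p,0)\xi$ and the remainder $w(0)$ to $H^2$: apply $L^2$ maximal regularity to the equation of variations \eqref{2.vvar} (resp.\ \eqref{2.rem}) to get $C^\alpha$-in-time control in $H$, observe that $F'(u)V'$ (resp.\ the right-hand side of \eqref{2.rem}) is then Hölder in time, and conclude via Schauder regularity. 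Your remark that the weight $e^{\theta t}$ is harmless on $[-1,0]$ is correct, but it is not the issue; the issue is that you invoke a one-step $L^2\to H^2$ trace estimate that does not hold.
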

\begin{proof} This is an almost immediate corollary of the parabolic smoothing property. Indeed,
let us first check that $\Cal M\in H^2$. To this end, it is enough to check that the backward
solution \eqref{2.back} actually belongs to $C_{e^{\theta t}}(\R_-,H^2)$. First, using the $L^2(H^2)$-maximal
 regularity for the solutions of a linear parabolic equation
\begin{equation}\label{2.parlin}
 \Dt v+Av=h(t),\ \ t\le0,
\end{equation}
 namely, that
\begin{multline}\label{2.ref2h2}
\|v\|_{C^\alpha(-1,0;H)}+ \|\Dt v\|_{L^2(-1,0;H)}+\|Av\|_{L^2(-1,0;H)}\le\\\le
  C_\alpha\(\|h\|_{L^2(-2,0,H)}+\|v\|_{L^2(-2,0,H)}\),
\end{multline}
 where $\alpha\in(0,\frac12)$,
we end up with the estimate
\begin{multline}\label{2.uh2}
\|u\|_{C^\alpha(-1,0;H)}\le C_\alpha\(\|F(u)\|_{L^2(-2,0;H)}+\|u\|_{L^2(-2,0;H)}\)\le\\\le
 C_{\alpha,\theta}(1+\|u\|_{L^2_{e^{\theta t}}(\R_-,H)})\le
  C(1+\|p\|),
\end{multline}
where $\alpha\in(0,\frac12)$. Second, using the $C^\alpha(H)$-maximal regularity for solutions
 of \eqref{2.parlin} and the obvious estimate
 $$
 \|F(u)\|_{C^\alpha(-2,0;H)}\le \|F\|_{C^\alpha(H,H)}(1+\|u\|_{C^\alpha(-2,0;H)}^\alpha),
 $$
 we arrive at
 \begin{multline}
\|\Dt u\|_{C^\alpha(-1,0;H)}+\|Au\|_{C^\alpha(-1,0;H)}\le\\\le
C\(\|F(u)\|_{C^\alpha(-2,0;H)}+\|u\|_{C^\alpha(-2,0;H)}\)\le\\\le C_1\(1+\|u\|_{C^\alpha(-2,0;H)}\)\le
C_2\(1+\|p\|\)
 \end{multline}
 and the fact that $M(p)$ belongs to $H^2$ is proved. The fact that $M$ is $C^{1,\eb}$-smooth as a map from $H_N$ to $H^2$ can be
  verified analogously and the corollary is proved.
\end{proof}
\begin{remark} The analogue of Corollary \ref{Cor2.1eb} holds for higher derivatives as well.
For instance, if we want to have $C^{n,\eb}$-smooth IM, we need to require that
\begin{equation}\label{2.sg-n}
\lambda_{N+1}-(n+\eb)\lambda_N>(n+1+\eb)L.
\end{equation}
To verify this, we just need to define the higher order Taylor jets for the IM $\Cal M$
using second, third, etc.,
equations of variations for \eqref{2.back} and use again Proposition \ref{Prop2.var}. For instance,
the second derivative $V''=V''(p,t)[\xi,\xi]$ solves
\begin{multline}\label{2.s-der}
\Dt V''+AV''-F'(u(t))V''=F''(u(t))[V'(p,t)\xi,V'(p,t)\xi],\\ {\bf P}_N V''\big|_{t=0}=0,\  u(t):=V(p,t).
\end{multline}
According to Proposition \ref{Prop2.var}, in order to be able to solve this equation, we need
 $\theta_+>2\theta_-$ (since $V'\in L^2_{e^{\theta t}}$ with $\theta>\theta_-$ and
the right-hand side $F''(u)[V',V']\in L^2_{e^{2\theta t}}$) which gives \eqref{2.sg-n} for $n=2$.
\par
We believe that sufficient condition \eqref{2.sg-n} for the existence of $C^{n,\eb}$-smooth IM
 is sharp for any $n$ and $\eb$, but we restrict ourselves by recalling below the classical
  counterexample of G. Sell to the existence of $C^2$-smooth IM which demonstrates the sharpness of \eqref{2.sg-n} for $n=2$, see \cite{S-sell}.
\end{remark}
\begin{example}\label{Ex2.sell} Let $H:=l^2$ (space of square summable sequences with the standard inner product) and let
 us consider the following particular case of equation \eqref{3.1}:
 \begin{equation}\label{2.counter}
 \frac d{dt}u_1+u_1=0,\ \ \frac d{dt}u_n+2^{n-1}u_n=u_{n-1}^2, \ n=2,3,\cdots
 \end{equation}
Here $\lambda_n=2^{n-1}$ and we have a set of resonances $2\lambda_n=\lambda_{n+1}$ which prevent the
existence of any finite-dimensional invariant local manifold of dimension greater than zero which is $C^2$-smooth
 and contains zero. Note that the non-linearity here is locally smooth near zero and since we are
  interested in {\it local} invariant manifolds near zero, the behaviour of it outside the small
  neighbourhood of zero is not important (we may always cut-off it outside of the neighbourhood to get
   global Lipschitz continuity). Moreover, since $F'(0)=0$, decreasing the size of the neighbourhood we
    may make the Lipschitz constant $L$ as small as we want. Thus, according to Corollary \ref{Cor2.1eb},
    for any $N\in\Bbb N$, there exists a local invariant manifold $\Cal M_N$ of dimension $N$ with
    the base $H_N$ which is $C^{1,\eb}$-smooth for any $\eb<1$.
    \par
    Let us check that $C^2$-smooth invariant local manifold does not exist. Indeed, let $\Cal M_N$
    be such a manifold of dimension $N$. Then, since the tangent plane $T\Cal M_N(0)$ to this
    manifold at zero
    is invariant with respect to $A$ (due to the fact that $F'(0)=0$), we must have
    $$
    H'_N:=T\Cal M_N(0)=\spann\{e_{n_1},\cdots,e_{n_N}\}
    $$
    for some $n_1<n_2<\cdots<n_N$. Thus, the manifold $\Cal M_N$ can be presented locally near zero as
     a graph of $C^2$-function $M:H_N'\to (H_N')^\perp$ such that $M(0)=M'(0)=0$. In particular, expanding $M$ in
     Taylor series near zero, we have
$$
u_{n_N+1}=(M(u_{n_1},\cdots,u_{n_N}),e_{n_N+1})=cu_{n_N}^2+\cdots
$$
Let us try to compute the constant $c$. Inserting this  in the $(n_N+1)$-th equation and using the
 invariance, we get
\begin{multline}
\Dt u_{n_N+1}+2^{n_N}u_{n_N+1}=2c\Dt u_{n_N}u_{n_N}+2^{n_N}cu_{n_N}^2+\cdots=\\=-2c2^{n_N-1}u_{n_N}^2+2^{n_N}cu_{n_N}^2+\cdots=
0+\cdots=u_{n_N}^2
\end{multline}
which gives $0=1$. Thus, the manifold $\Cal M_N$ cannot be $C^2$-smooth.
\end{example}

\begin{remark} Note that in the case where $A$ is an elliptic operator of order $2k$ in a bounded
 domain $\Omega$ of $\R^d$, we have $\lambda_n\sim Cn^{2k/d}$ due to the Weyl asymptotic. Thus, one may expect
  in general only the gaps of the size
\begin{equation}\label{2.weyl}
 \lambda_{N+1}-\lambda_N\sim CN^{\frac{2k}d-1}\sim C'\lambda_N^{1-\frac d{2k}}
\end{equation}
  which is much weaker than \eqref{2.sg-n} with $n>1$. Sometimes the exponent in the
  right-hand side of \eqref{2.weyl} may be improved due to big multiplicity of eigenvalues (e.g. for
  the Laplace-Beltrami operator on a sphere $S^d$, we have $\lambda_N^{1/2}$ there for all $d$), but this
  exponent is always {\it less than one} in all more or less realistic examples. Thus, the existence of 
  $C^n$-smooth IMs with $n>1$ looks not realistic and could be obtained in general only for
   bifurcation problems where, e.g. $\lambda_1,\cdots,\lambda_N$ are close to zero, $\lambda_{N+1}$ is of order
    one and $L$ is small.
\par
In contrast to this, if the spectral gap conditions \eqref{2.sg} are satisfied for some $N$, we always can
 find a small positive $\eb=\eb_N$ such that \eqref{2.sgeb} will be also satisfied. Thus, if the nonlinearity
 $F$ is smooth enough, we automatically get a $C^{1,\eb}$-smooth IM for some small $\eb$ depending on $N$ and $L$.
\end{remark}
\begin{remark}\label{Rem2.strange} Let $\bar u(t)$ be a trajectory of \eqref{3.1} belonging to the IM,
i.e.
$$
{\bf Q}_N\bar u(t)\equiv M_N({\bf P}_N\bar u (t))
$$
and let $\bar u_N:={\bf P}_N\bar u(t)$. Then, we may write a linearization near the trajectory
 $\bar u(t)$ in two natural ways. First, we may just linearize equation \eqref{3.1} without using
  the fact that $\bar u\in\Cal M_N$. This gives the equation
  \begin{equation}\label{2.lin-inf}
\Dt v+Av-F'(\bar u)v=h(t)
  \end{equation}
  which we have used above to get the existence of the IM, its smoothness and exponential tracking.
  \par
  Alternatively, we may linearize the reduced ODEs \eqref{3.IF}:
  \begin{equation}\label{2.lin-fin}
\Dt v_N+Av_N-F'(\bar u)(v_N+M'_N(\bar u)v_N)=h_N(t).
  \end{equation}
Of course, these two equations are closely related. Namely, if $v_N(t)$ solves \eqref{2.lin-fin},
then the function
\begin{equation}\label{2.lift}
v(t):=v_N(t)+M'_N(\bar u(t))v_N(t)
\end{equation}
solves \eqref{2.lin-inf} with
\begin{equation}\label{2.lift-h}
h(t):=h_N(t)+M'_N(\bar u(t))h_N(t).
\end{equation}
Vice versa, if $h(t)$ satisfies \eqref{2.lift-h} and the solution $v(t)$ of \eqref{2.lin-inf} satisfies
\eqref{2.lift} for some $t$, then it satisfies \eqref{2.lift} for all $t$ and $v_N(t):={\bf P}_N v(t)$
solves \eqref{2.lin-fin}.
\par
This equivalence is a straightforward corollary of the invariance of the manifold $\Cal M_N$ and we leave
 its rigorous proof to the reader.
\end{remark}

\section{Main result}\label{s3}
In this section we develop an alternative approach for constructing $C^n$-smooth IFs which does not require
huge spectral gaps. The key idea is to require instead the existence of {\it many} spectral gaps and to use
 the second spectral gap in order to solve equation \eqref{2.s-der} for the second derivative, the third gap
  to solve the appropriate equation for the third derivative, etc. Of course, this will not allow us to
  construct $C^n$-smooth IM (we know that it may not exist for $n>1$, see Example \ref{Ex2.sell}).
  Instead, for every $p\in\Cal M_{N_2}$ and the corresponding trajectory $u=V(p,t)$,
  we construct the corresponding Taylor jet $J_\xi^n V(p,t)$ of length $n+1$ belonging to the
   space $\Cal P^n(H_{N_n},H)$ for all $t\le0$, where $N_k$ is the dimension of the IM $\Cal M_{N_k}$ built
    up on the $k$th spectral gap. These jets must be constructed in such a way that the
    compatibility conditions are satisfied. Then, the Whitney embedding theorem will give
     us the desired smooth extension of the initial IM. To be more precise, we give the
      following definition of such a smooth extension.

\begin{definition}\label{Def3.ext-IF} Let  equation \eqref{3.1} possess at least two spectral gaps which
 corresponds to the dimensions $K_1$ and $K_2$ and let $\eb>0$ be a small number. Denote the corresponding
  IMs by  $\Cal M_{K_1}$ and $\Cal M_{K_2}$ respectively, the corresponding $C^{1,\eb}$-functions
  generating these manifolds are denoted by $M_{K_1}$ and $M_{K_2}$ respectively. A
  $C^{n,\eb}$-smooth submanifold $\tilde{\Cal M}_{K_2}$
  (not necessarily invariant) of dimension $K_2$ is called a $C^n$-extension of the IM $\Cal M_{K_1}$ if
  the following conditions hold:
  \par
  1) $\tilde{\Cal M}_{K_2}$ is a graph of a $C^{n,\eb}$-smooth function $\tilde M_{K_2}:{\bf P}_{K_2}H\to
   {\bf Q}_{K_2}H$.
  \par
  2) $\tilde M_{K_2}\big|_{{\bf P}_{K_2}\Cal M_{K_1}}={\bf Q}_{K_2}M_{K_1}$ and therefore
  $\Cal M_{K_1}\subset \tilde {\Cal M}_{K_2}$.
\par
3) $\tilde M_{K_2}$ is $\mu$-close in the $C^1_b$-norm to $M_{K_2}$
 for a sufficiently small $\mu$.
\end{definition}
\begin{remark}\label{Rem3.main}
The $C^{n,\eb}$ dynamics on the extended IM $\tilde{\Cal M}_{K_2}$ is naturally defined via
\begin{equation}\label{3.IF-ext}
\Dt u_{K_2}+A u_{K_2}={\bf P}_{K_2} F(u_{K_2}+\tilde M_{K_2}(u_{K_2})),\ \ u_{K_2}\in H_{K_2}
\end{equation}
and $u(t):=u_{K_2}(t)+\tilde M_{K_2}(u_{K_2}(t))$. Obviously, the manifold $\tilde{\Cal M}_{K_2}$ is
 invariant with respect to the dynamical system thus defined. Moreover, due to the
 second condition of Definition \ref{Def3.ext-IF}, the $C^{1,\eb}$-submanifold
 ${\bf P}_{K_2}\Cal M_{K_1}\subset H_{K_2}$ is invariant with respect to equation \eqref{3.IF-ext}
 and the restriction of \eqref{3.IF-ext} coincides with the initial IF \eqref{3.IF} generated
  by the IM $\Cal M_{K_1}$. Thus, system of ODEs \eqref{3.IF-ext} is indeed a
  smooth extension of the IF \eqref{3.IF}.
  \par
  Finally, the 3rd condition of Definition \ref{Def3.ext-IF} guarantees that ${\bf P}_{K_2}\Cal M_{K_1}$
   is a normally hyperbolic stable invariant manifold for \eqref{3.IF-ext}
    (since it is so for the IF generated by the function $M_{K_2}$). This means that
    ${\bf P}_{K_2}\Cal M_{K_1}$
    also  possesses an exponential tracking property. Thus, the limit dynamics generated by the extended IF
     coincides with the one generated by the initial abstract parabolic equation \eqref{3.1}.
\end{remark}
We are now ready to state the main result of the paper.
\begin{theorem}\label{Th3.main} Let the nonlinearity $F:H\to H$ in equation \eqref{3.1} be smooth
and all its derivatives
be globally bounded. Let also   the
 following form of spectral gap conditions be satisfied:
 \begin{equation}\label{3.sginf}
 \limsup_{N\to\infty}(\lambda_{N+1}-\lambda_N)=\infty.
 \end{equation}
 Then, for any $n\in\Bbb N$ and any $\mu>0$, equation \eqref{3.1} possesses a $C^{n,\eb}$-smooth
 extension $\tilde{\Cal M}_{N_n}$ of the initial IM $\Cal M_{N_1}$ (where $N_1$ is the first $N$ which satisfies
 the spectral gap condition \eqref{2.sg} and $\eb>0$ is small enough) such that $\tilde{\Cal M}_{N_n}$ is $\mu$-close to
  the IM $\Cal M_{N_n}$ in the $C^{1}_b$-norm.
\end{theorem}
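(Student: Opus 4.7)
The plan is to apply the Whitney extension theorem (Theorem~\ref{Th2.5}) on the finite-dimensional ambient space $H_{N_n}$, with base set $V := {\bf P}_{N_n}\Cal M_{N_1}$, having prescribed for each $p \in V$ a Taylor polynomial $P(\xi, p) \in \Cal P^n(H_{N_n}, H)$ of order $n$. Under the hypothesis~\eqref{3.sginf} the tower of embedded IMs $\Cal M_{N_1} \subset \Cal M_{N_2} \subset \cdots \subset \Cal M_{N_n}$ is available, and the guiding principle is that the $k$-th spectral gap will be used to solve the $k$-th equation of variations, in the sense indicated at the start of Section~\ref{s3} and foreshadowed by~\eqref{2.s-der}: the gap at $N_1$ produces the first-order jet $P_1$, the gap at $N_2$ produces $P_2$, and so on, each additional gap buying one further order of differentiability.

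Concretely, I would select the tower $N_1 < N_2 < \cdots < N_n$ by invoking~\eqref{3.sginf} to realize a higher-order gap of the form~\eqref{2.sg-n} at each level $N_k$, yielding an exponent $\theta_k \in (\lambda_{N_k}+L, \lambda_{N_k+1}-L)$ with $k\theta_k$ still inside the solvability window of Proposition~\ref{Prop2.var}. For each $p \in V$, let $u(\cdot) = V(p, \cdot) \in \Cal M_{N_1}$ be the backward trajectory supplied by Theorem~\ref{Th2.IM}, set $P_0(p) := {\bf Q}_{N_n} M_{N_1}({\bf P}_{N_1} p)$, and inductively define $P_k(\xi, p) := {\bf Q}_{N_n} V^{(k)}(p, 0)[\{\xi\}^k]$ for $k = 1, \ldots, n$, where $V^{(k)}$ is the solution of the $k$-th variational equation
\[
\Dt V^{(k)} + A V^{(k)} - F'(u) V^{(k)} = G_k\bigl(u; V^{(1)}, \ldots, V^{(k-1)}\bigr),
\]
with the initial spectral projection equal to $\xi$ for $k = 1$ and to $0$ for $k \ge 2$; here $G_k$ is the Fa\`a di Bruno-type polynomial contracting $F^{(j)}(u)$ against $V^{(1)}, \ldots, V^{(k-1)}$ (the case $k = 2$ being precisely~\eqref{2.s-der}). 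Proposition~\ref{Prop2.var}, applied at level $N_k$ in the weighted space $L^2_{e^{k\theta_k t}}(\R_-, H)$, guarantees that each $V^{(k)}$ is well-defined and depends symmetrically and polynomially of degree $k$ on $\xi$, so that $P(\xi, p) := \sum_{k=0}^n \tfrac{1}{k!} P_k(\xi, p) \in \Cal P^n(H_{N_n}, H)$.

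The decisive step, and the main obstacle, is to verify the Whitney compatibility conditions~\eqref{2.13} for the family $\{P(\cdot, p)\}_{p \in V}$. I would compare, for $p_1, p_1 + \delta \in V$, the two backward trajectories $V(p_1, \cdot)$ and $V(p_1 + \delta, \cdot)$ together with their variational flows of all orders up to $n$, expanding the latter along $\delta$ and estimating each remainder inductively by Proposition~\ref{Prop2.var} in $L^2_{e^{m\theta_m t}}(\R_-, H)$ for $m \le n$; passing to $t = 0$ via the parabolic smoothing estimate~\eqref{2.ref2h2} and invoking the converse Taylor theorem (Theorem~\ref{Th2.4}) reassembles the bounds into the required $O((\|\xi\| + \|\delta\|)^{n+\eb})$-estimate. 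This is the higher-order analogue of the argument behind Corollary~\ref{Cor2.1eb} and is carried out in Section~\ref{s4}. Once compatibility is secured, Theorem~\ref{Th2.5} delivers $\tilde M_{N_n}^W \in C^{n,\eb}(H_{N_n}, {\bf Q}_{N_n} H)$ realizing the prescribed jets on $V$; since $P_0$ and $P_1$ on $V$ coincide with the zeroth- and first-order Taylor data of $M_{N_n}$ (both computed by the same first variational equation, as in Corollary~\ref{Cor2.1eb}), the difference $\tilde M_{N_n}^W - M_{N_n}$ vanishes to first order on $V$, and a standard cutoff patching $\tilde M_{N_n}^W$ near $V$ to a $C^{n,\eb}$-mollification of $M_{N_n}$ away from $V$ (at $\mu$-scale) furnishes the final $\tilde M_{N_n}$ that is globally $\mu$-close to $M_{N_n}$ in the $C^1_b$-norm.
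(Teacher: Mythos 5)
Your high-level strategy is the paper's: use~\eqref{3.sginf} to produce a tower of IMs, build order-$n$ Taylor jets on ${\bf P}_{N_n}\Cal M_{N_1}$ via weighted backward variational equations, verify the Whitney compatibility conditions~\eqref{2.13}, extend by Theorem~\ref{Th2.5}, and patch near~$V$ with a mollification of~$M_{N_n}$ to get the $C^1_b$-closeness. But your concrete prescription of the jets does not work, and fixing it is precisely the heart of the paper's argument. You take $G_k$ to be the \emph{standard} Fa\`a di Bruno right-hand side built from $V^{(1)},\dots,V^{(k-1)}$, i.e.\ for $k=2$ the term $F''(u)[V^{(1)},V^{(1)}]$, solved with a single exponent hierarchy. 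To solve this with initial projection ${\bf P}_{N_k}$ you need the growth exponent of the right-hand side to sit in the $N_k$-window $(\lambda_{N_k}+L,\,\lambda_{N_k+1}-L)$. But if $V^{(1)}$ lives at exponent $\theta$ then $F''[V^{(1)},V^{(1)}]$ lives at exponent $2\theta$, and for ${\bf P}_{N_k}$ initial data with $\theta$ already in the $N_k$-window this forces $\lambda_{N_k+1}>2\lambda_{N_k}+3L$ --- the quadratic gap~\eqref{2.sg2}. Your remark that~\eqref{3.sginf} lets you ``realize a higher-order gap of the form~\eqref{2.sg-n} at each level'' is simply false: $\limsup(\lambda_{N+1}-\lambda_N)=\infty$ gives infinitely many gaps of size $>2L$, never gaps of the exponential form~\eqref{2.sg-n}. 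That is exactly the situation the theorem is designed to handle.

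The device you are missing is the non-symmetric right-hand side~\eqref{3.Fn}, which deliberately mixes jets from \emph{two different levels}. At order $k=2$ it reads $2F''[V',W']-F''[V',V']$ rather than $F''[W',W']$: here $V'$ is the first variation at level $N_1$ (growth $e^{\theta_1|t|}$, with $\theta_1$ small) and $W'$ the first variation at level $N_{n+1}$ (growth $e^{\theta_{n+1}|t|}$). Because $W'$ enters only \emph{linearly}, the combined growth exponent is $\theta_1+\theta_{n+1}$, not $2\theta_{n+1}$, and the hypothesis~\eqref{4.gapnp1} (which needs only~\eqref{3.sginf}, not~\eqref{2.sg-n}) guarantees this exponent still fits into the $N_{n+1}$-window; the same pattern holds at every order, always with exactly one $W$-jet factor. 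The combination $kF^{(k)}[\{j^nV\}^{k-1},j^nW]-(k-1)F^{(k)}[\{j^nV\}^k]$ is also precisely what is needed for the cancellations in Section~\ref{s4}: when you expand in $\delta$ and compare $F^{[m+1]}(p_1,\xi)$ against $F^{[m+1]}(p,\xi+\delta)$, the binomial identities $lC_k^l=kC_{k-1}^{l-1}$ and $-(l-1)C_k^l=kC_{k-1}^l-(k-1)C_k^l$ make the second-and-higher-derivative contributions cancel, leaving only a first-derivative term that Proposition~\ref{Prop2.var} controls. Your proposed $G_k$ does not produce these cancellations, so even in the regimes where you could solve the variational equation, the compatibility conditions would fail and Whitney's theorem would not apply. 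The patching and closeness argument at the end of your proposal is fine and matches the paper's~\eqref{3.small}--\eqref{3.MM}.
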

\begin{proof}[Proof for $n=2$] Let $N_1$ be the first $N$ for which the spectral gap condition
 \eqref{2.sg} is satisfied with $L:=\|F'\|_{C_b(H,\Cal L(H,H))}$ and let the
 corresponding $\Cal M_1$ be $C^{1,\eb}$-smooth IM which exists due to Theorem \ref{Th2.IM} and
 Corollary \ref{Cor2.1eb}. Recall that for any $p\in H$, we have a solution $V(p,t)$  of
 problem \eqref{2.back} (where $p$ is replaced by ${\bf  P}_{N_1}p$) and its Frechet derivative
 $V'_\xi(t):=V'(p,t)\xi$ in $p$ satisfies equation of variations
\eqref{2.vvar} and belongs to the space $L^2_{e^{\theta_1 t}}(\R_-,H)$ for any $\theta_1$
satisfying \eqref{2.gap}. Moreover, for any other $p_1\in H$, we have the estimate
\begin{equation}\label{3.c1}
\|V(p_1,t)-V(p,t)-V'_\xi(t)\|_{L^2_{e^{\theta_1(1+\eb) t}}(\R-,H)}\le C\|{\bf P}_{N_1}(p-p_1)\|^{1+\eb},
\end{equation}
where $\eb>0$, $\xi:=p_1-p$ and $C$ is independent of $p$ and $p_1$.
\par
Let now $N_2>N_1$ be the first $N$ which satisfies
\begin{equation}\label{3.sg2}
\lambda_{N_2+1}-\lambda_{N_2}-\lambda_{N_1}>3L
\end{equation}
(such $N$ exists due to condition \eqref{3.sginf}). Then, we have the
corresponding $C^{1,\eb}$-smooth IM $\Cal M_{N_2}$. Let us denote by $W(p,t)$, $p\in H$,
the corresponding solution of \eqref{2.back} (where $N$ is replaced by $N_2$ and $p$ is replaced by ${\bf P}_{N_2}p$). This solution belongs
to $L^2_{e^{\theta_2 t}}(\R_-,H)$ with $\theta_2$ satisfying \eqref{2.gap} (with $N$ replaced by $N_2$).
Moreover, analogously to \eqref{3.c1}, we have
\begin{equation}\label{3.c12}
\|W(p_1,t)-W(p,t)-W'_\xi(t)\|_{L^2_{e^{\theta_2(1+\eb) t}}(\R_-,H)}\le C\|{\bf P}_{N_2}(p-p_1)\|^{1+\eb},
\end{equation}
where $W'_\xi(t)=W'(p,t)\xi$ solves \eqref{2.vvar} with $N$ replaced by $N_2$.
We also know that $V(p,t)=W(p,t)$ if $p\in\Cal M_{N_1}$ and, therefore, due to \eqref{3.c1} and \eqref{3.c12},
\begin{multline}\label{3.dif-der}
\|V'(p,\cdot)\xi-W'(p,\cdot)\xi\|_{L^2_{e^{\theta_2(1+\eb) t}}(\R_-,H)}\le\\\le
 C\|{\bf P}_{N_2}\xi\|^{1+\eb},\ \xi=p_1-p,\ \ p,p_1\in\Cal M_{N_1}.
\end{multline}
Let us define for every $p\in \Cal M_{N_1}$ and every $\xi\in H$ the "second derivative"
$W''_\xi=W''(p,t)[\xi,\xi]$
of the trajectory $u(t)=W(p,t)=V(p,t)$ as a solution of the following problem
\begin{multline}\label{3.2der}
\Dt W''_\xi+AW''_\xi-F'(V(p,t))W''_\xi=\\=2F''(V(p,t))[V'_\xi,W'_\xi]-F''(V(p,t))[V'_\xi,V'_\xi],\ \
{\bf P}_{N_2}W''_\xi\big|_{t=0}=0.
\end{multline}
Note that the right-hand side of this equation belongs to the weighted space  $L^2_{e^{(\theta_1+\theta_2)t}}(\R_-,H)$,
where the exponents $\theta_1$ and $\theta_2$ satisfy assumption \eqref{2.gap} with
 $N=N_1$ and $N=N_2$ respectively. Moreover, due to assumption \eqref{3.sg2}, it is possible to
 fix $\theta_1$ and $\theta_2$ in such a way that the exponent $\theta_1+\theta_2$ still satisfies
  \eqref{2.sg} with $N=N_2$. Thus, by Proposition \ref{Prop2.var}, there exists a unique solution of
   \eqref{3.2der} belonging to the space $L^2_{e^{(\theta_1+\theta_2)t}}(\R_-,H)$ and the function
    $W''_\xi$ is well-defined and satisfies
$$
\|W''_\xi\|_{C_{e^{(\theta_1+\theta_2)t}}(\R_-,H)}\le
C\|W''_\xi\|_{L^2_{e^{(\theta_1+\theta_2)t}}(\R_-,H)}\le C^2\|\xi\|^2,
$$
where $C$ is independent of $p$.
\par
Let us define the desired quadratic polynomial $\xi\to J_\xi^2 W(p,t)$, $p\in\Cal M_{N_1}$ as follows:
\begin{equation}\label{3.2-jet}
J_\xi^2 W(p,t):=V(p,t)+W'(p,t)\xi+\frac12W''(p,t)[\xi,\xi],\ \ \xi\in H.
\end{equation}
We need to verify the compatibility conditions for these "Taylor jets" on $p\in\Cal M_{N_1}$. It
is straightforward to check using $F\in C^{2,\eb}$, $V,W\in C^{1,\eb}$ and Proposition
\ref{Prop2.var} that
$$
\|W''(p_1,\cdot)[\xi,\xi]-W''(p,\cdot)[\xi,\xi]\|_{L^2_{e^{(\theta_1+\theta_2+\eb)t}}(\R_-,H)}
\le C\|\xi\|^2\|p-p_1\|^\eb
$$
for $p,p_1\in\Cal M_{N_1}$. This gives us the desired compatibility condition for
the second derivative, see \eqref{2.13} for $n=l=2$.
\par
Let us now verify the compatibility conditions for the first derivative ($l=1$, $n=2$ in \eqref{2.13}).
To this end, we need to expand  the difference $w(t):=W'(p_1,t)\xi-W'(p,t)\xi$, $p,p_1\in\Cal M_{N_1}$ in
 terms of $\delta=p-p_1$. By the definition of $W'$, this function satisfies the equation
 \begin{multline}\label{3.2der-bad}
\Dt w+Aw-F'(V(p,t))w=(F'(V(p_1,t))-F'(V(p,t)))W'(p_1,t)\xi=\\=F''(V(p,t))[V'(p,t)\delta,W'(p,t)\xi]+h(t),\ \ {\bf P}_{N_2}w\big|_{t=0}=0,
\end{multline}
 where the reminder $h$ satisfies
 $$
 \|h\|_{L^2_{e^{(\theta_1+\theta_2+\eb)t}}(\R_-,H)}\le C\|\delta\|^{1+\eb}\|\xi\|
 $$
 for sufficiently small positive $\eb$ (this also follows from the fact that
 $F$ is smooth and $V,W\in C^{1,\eb}$). Thus, the reminder $h$ in the right-hand side of \eqref{3.2der-bad} is of
 higher order in $\delta$ and, by this reason, is not essential, so we need to study the bilinear form (w.r.t. $\delta,\xi$)
  in the right-hand side. Note that, in contrast to the case where the IM is $C^2$, this form is even not symmetric,
  so it should be corrected. Namely, we write the identity
\begin{multline}
F''(V(p,t))[V'(p,t)\delta,W'(p,t)\xi]=\\=
\left\{F''(V(p,t))[V'(p,t)\delta,W'(p,t)\xi]+F''(V(p,t))[V'(p,t)\xi,W'(p,t)\delta]-\right.\\-\left.
F''(V(p,t))[V'(p,t)\delta,V'(p,t)\xi]\right\}-\\-
F''(V(p,t))[V'(p,t)\xi,W'(p,t)\delta-V'(p,t)\delta]
\end{multline}
and note that the first term in the right-hand side is nothing more than the symmetric bilinear form
 which corresponds to the quadratic form
 $$
 2F''(V(p,t))[V'(p,t)\xi,W'(p,t)\xi]-F''(V(p,t))[V'(p,t)\xi,V'(p,t)\xi]
 $$
 used in \eqref{3.2der} to define $W''$ and the second term is of order $\|\delta\|^{1+\eb}\|\xi\|$ due
 to estimate \eqref{3.dif-der} (where $\xi$ is replaced by $\delta$) and the growth rate of this
  term does not exceed $e^{-(\theta_1+\theta_2+\eb)t}$ as $t\to-\infty$. Thus,
  by Proposition \ref{Prop2.var}, we have
$$
\|w-W''(p,\cdot)[\delta,\xi]\|_{L^2_{e^{(\theta_1+\theta_2+\eb)t}}(\R_-,H)}\le C\|\delta\|^{1+\eb}\|\xi\|
$$
and the compatibility condition for $l=1$ is verified.
\par
Finally, let us check the zero order compatibility condition ($l=0$, $n=2$ in \eqref{2.13}). Let
$$
R(t):=V(p_1,t)-V(p,t)-W'(p,t)\delta-\frac1{2!}W''(p,t)[\delta,\delta].
$$
Then, as elementary computations show, this function satisfies the equation
\begin{multline}\label{3.T}
\Dt R+A R-F'(V(p,t))R=\\=\left\{F(V(p_1,t))-F(V(p,t))-F'(V(p,t))(V(p_1,t)-V(p,t))\right\}-\\-
\frac1{2!}\(2F''(V(p,t))[V'(p,t)\delta,W'(p,t)\delta]-F''(V(p,t))[V'(p,t)\delta,V'(p,t)\delta]\),\\
{\bf P}_{N_2}\big|_{t=0}R=0.
\end{multline}
Since $F\in C^{2,\eb}$ and $V\in C^{1,\eb}$, the first term in the right-hand side equals to
\begin{equation}\label{3.good}
\frac1{2!}F''(V(p,t))[V'(p,t)\delta,V'(p,t)\delta]
\end{equation}
up to the controllable in $L^2_{e^{(\theta_1+\theta_2+\eb)t}}(\R_-,H)$-norm remainder of
order $\|\delta\|^{2+\eb}$. The second term can be simplified using \eqref{3.dif-der} and
also equals to \eqref{3.good} up to higher order terms. Thus, the right-hand side of \eqref{3.T} vanishes
 up to terms of order $\|\delta\|^{2+\eb}$ and Proposition \ref{Prop2.var} gives us that
\begin{equation}\label{3.0comp}
\|R\|_{L^2_{e^{(\theta_1+\theta_2+\eb)t}}(\R_-,H)}\le C\|\delta\|^{2+\eb}
\end{equation}
 for some positive $\eb$. This finishes the verification of the compatibility conditions.
 \par
 We are now ready to use Whitney extension theorem. To this end, we first recall that the IM
 $\Cal M_{N_2}$ is a graph of the $C^{1,\eb}$-function $M_{N_2}:{\bf P}_{N_2}H\to {\bf Q}_{N_2}H$
 which is defined via $M_{N_2}(p):={\bf Q}_{N_2}W(p,0)$, $p\in {\bf P}_{N_2}H=H_{N_2}$ (all functions $V,W,W',W''$
 defined above depend only on ${\bf P}_{N_2}$-component of $p\in H$, so without loss of generality
  we may assume that $p,\xi,\delta\in H_{N_2}$ (we took them from $H$ in order to simplify
   the notations only). Thus, projecting the constructed Taylor jets to $t=0$ and ${\bf Q}_{N_2}H$, we get
   the $C^{1,\eb}$-function $M_{N_2}(p)$ restricted to the invariant set $p\in {\bf P}_{N_2}\Cal M_{N_1}$
    and a family of quadratic polynomials
    $$
    J^2_\xi M_{N_2}(p):={\bf Q}_{N_2}J^2_\xi W(p,0)
    $$
    which satisfy the compatibility conditions on $p\in{\bf P}_{N_2}\Cal M_{N_1}$. Therefore, since
     $H_{N_2}$ is finite-dimensional, Whitney extension theorem gives the existence of
     a $C^{2,\eb}$-function $\widehat M_{N_2}:{\bf P}_{N_2}H\to{\bf Q}_{N_2}H$ such that
     $$
     J^2_\xi \widehat M_{N_2}(p)=J^2_\xi M_{N_2}(p),\ \ p\in{\bf P}_{N_2}\Cal M_{N_1}.
     $$
     Thus, the desired $C^{2+\eb}$-extension of the IM $\Cal M_{N_1}$ is "almost" constructed.
     It only remains to take care about the closeness in the $C^1$-norm. To this end, for any small $\nu>0$, we
      introduce a cut-off function $\rho_\nu\in C^\infty(H_{N_2},\R)$ such that $\rho(p)\equiv0$ if
      $p$ belongs to the $\nu$-neighbourhood $\Cal O_\nu$ of ${\bf P}_{N_2}\Cal M_{N_1}$ and
      $\rho(p)\equiv1$ if
  $p\notin \Cal O_{2\nu}$. Moreover, since ${\bf P}_{N_2}\Cal M_{N_1}$ is $C^{1,\eb}$-smooth, we may require
   also that
\begin{equation}\label{3.varphi}
|\nabla_p\rho(p)|\le C\nu^{-1},
\end{equation}
      where the constant $C$ is independent of $\nu$. Finally, we define
      \begin{equation}\label{3.small}
      \tilde M_{N_2}(p):=(1-\rho_\nu(p))\widehat M_{N_2}(p)+\rho_\nu(p) (\Bbb S_{\nu^2} M_{N_2})(p),
      \end{equation}
      where $\Bbb S_\mu$ is a standard mollifying operator:
$$
(\Bbb S_\mu f)(p):=\int_{\R^{N_2}}\beta_\mu(p-q)f(q)\,dq
$$
and the kernel $\beta_\mu(p)=\frac1{\mu^{N_2}}\beta_1(p/\mu)$ and $\beta_1(p)$ is a smooth, non-negative
function with compact support satisfying $\int_{\R^{N_2}}\beta_1(p)\,dp=1$.
\par
       We claim that $\tilde M_{N_2}$ is a
      desired extension. Indeed, $\tilde M_{N_2}(p)\equiv\widehat M_{N_2}(p)$ in $\Cal O_\nu$ and therefore
      $\tilde M_{N_2}$ and $M_{N_2}$ coincide on ${\bf P}_{N_2}\Cal M_{N_1}$. Obviously, $\tilde M_{N_2}$
      is $C^{2,\eb}$-smooth. To verify closeness, we note that
\begin{multline}\label{3.MM}
      \widetilde M_{N_2}(p)-M_{N_2}(p)=(1-\rho_\nu(p))(\widehat M_{N_2}(p)-M_{N_2}(p))+\\+
      \rho_\nu(p)((\Bbb S_{\nu^2}M_{N_2})(p)-M_{N_2}(p))
\end{multline}
      Using the fact that $M_{N_2}\in C^{1,\eb}$ together with the standard estimates for the
       mollifying operator, we get
       $$
       \|(\Bbb S_{\nu^2}M_{N_2})(p)-M_{N_2}(p)\|\le C\nu^2,\ \
       \|\nabla_p(\Bbb S_{\nu^2}M_{N_2})(p)-\nabla_pM_{N_2}(p)\|\le C\nu^{2\eb}
       $$
which together with \eqref{3.varphi} shows that the $C^1$-norm of the
 second term in the right-hand side of \eqref{3.MM} is of order $\nu^{2\eb}$. To estimate the first
  term, we use that both functions $\widehat M_{N_2}(p)$ and $M_{N_2}(p)$ are at least $C^{1,\eb}$-smooth
  and
  $$
  \widehat M_{N_2}(p)=M_{N_2}(p),\ \ \nabla_p\widehat M_{N_2}(p)=\nabla_pM_{N_2}(p),\ \ p\in{\bf P}_{N_2}\Cal M_{N_1}.
  $$
  By this reason,
  $$
  \|\widehat M_{N_2}(p)-M_{N_2}(p)\|\le C\nu^{1+\eb},\ \|\nabla_p\widehat M_{N_2}(p)-\nabla_pM_{N_2}(p)\|\le C\nu^\eb
  $$
  for all $p\in\Cal O_{2\nu}$. Thus, using \eqref{3.varphi} again, we see that
  $$
\|\widetilde M_{N_2}(\cdot)-M_{N_2}(\cdot)\|_{C^1_b(H_{N_2},H)}\le C\nu^\eb.
$$
This finishes the proof of the theorem for the case $n=2$.
  \end{proof}
\begin{proof}[Proof for general $n\in\Bbb N$] We will proceed by induction with respect to $n$.
Assume that for some $n\in\Bbb N$, we have already constructed the $C^{1,\eb}$-smooth inertial
manifold $\Cal M_{N_n}$ which is a graph of a map $M_{N_{n}}:{\bf P}_{N_n}H\to {\bf Q}_{N_n}H$
and this map is constructed via the solution $V(p,t)$, $t\le0$, $p\in H$ of the backward
problem \eqref{2.back} where $N$ is replaced by $N_n$. Recall that this manifold is constructed using the
 $n$th spectral gap. Assume also that, for every $p\in{\bf P}_{N_n}\Cal M_{N_1}$, we have
  already constructed the $n$th Taylor jet $J^n_\xi V(p,t)$ such that the compatibility conditions
  up to order $n$ are satisfied. In contrast to the proof for the case $n=2$, it is convenient for
   us to write these conditions in the form of \eqref{2.12}:
\begin{multline}\label{3.compd}
   \|J_\xi^nV(p_1,\cdot)-
   J^n_{\xi+\delta}V(p,\cdot)\|_{L^2_{e^{(\theta_n+(n-1)\theta_{n-1}+\eb)t}}(\R_-,H)}\le\\\le
   C(\|\delta\|+\|\xi\|)^{n+\eb}.
\end{multline}
Here $\xi\in H$ is arbitrary, $\delta:=p_1-p$, $\eb>0$ and $\theta_1<\theta_2\cdots<\theta_n$ are the exponents
 which satisfy conditions \eqref{2.gap} for $N=N_1,\cdots,N_n$. In order to simplify  notations,
 we will write below
\begin{equation}\label{3.compd1}
J_\xi^nV(p_1)-
   J^n_{\xi+\delta}V(p)=O_{\theta_n+(n-1)\theta_{n-1}+\eb}\((\|\delta\|+\|\xi\|)^{n+\eb}\)
\end{equation}
instead of \eqref{3.compd} and also in similar situations. Rewriting \eqref{3.compd1} in terms of
 truncated jets, we have
\begin{equation}\label{3.compt}
j_\xi^nV(p_1)+j^n_\delta V(p)-
   j^n_{\xi+\delta}V(p)=O_{n\theta_n+\eb}\((\|\delta\|+\|\xi\|)^{n+\eb}\),
\end{equation}
where we have  used that $\theta_{n-1}<\theta_n$. We also need the induction assumption that \eqref{3.compt}
holds for every $m\le n$, namely,
 \begin{equation}\label{3.compd3}
J_\xi^mV(p_1)-
   J^m_{\xi+\delta}V(p)=O_{m\theta_n+\eb}\((\|\delta\|+\|\xi\|)^{m+\eb}\).
\end{equation}
Let us now consider the $(n+1)$th spectral gap at $N=N_{n+1}$ which is the first $N$ satisfying
\begin{equation}\label{4.gapnp1}
\lambda_{N_{n+1}}+L+n(\lambda_{N_n+1}-L)<\lambda_{N_{n+1}+1}-L.
\end{equation}
Let $\Cal M_{N_{n+1}}$ be the corresponding IM which is generated by the backward solution $W(p,t)$ of
 problem \eqref{2.back} with $N$ replaced by $N_{n+1}$.
We need to define the $(n+1)$th Taylor jet $J_\xi^{n+1}W(p,t)$ for the function $W(p,t)$:
\begin{equation}\label{3.Wjet}
J_\xi^{n+1} W(p,t)=W(p,t)+\sum_{k=1}^{n+1}\frac1{k!}W^{(k)}(p,t)[\{\xi\}^k],
\end{equation}
$\xi\in H$ and  $p\in {\bf P}_{N_{n+1}}\Cal M_{N_1}$ and to verify the compatibility
conditions of order $n+1$. Keeping in mind already considered case $n=1$ and $n=2$, we introduce
 the required jet \eqref{3.Wjet} as a backward solution of the following equation:
\begin{multline}\label{3.jeteq}
\Dt J^{n+1}_\xi W(p)+AJ^{n+1}_\xi W(p)\!=\!F^{[n+1]}(p,\xi),\\  {\bf P}_{N_{n+1}}J^{n+1}_\xi(p)\big|_{t=0}={\bf P}_{N_{n+1}}(p+\xi),
\end{multline}
where
\begin{multline}\label{3.Fn}
F^{[n+1]}(p,\xi,t):=F(W(p,t))+F'(W(p,t))j_\xi^{n+1}W(p,t)+\\+\sum_{k=2}^{n+1}
\frac1{k!}\(kF^{(k)}(W(p,t))[\{j_\xi^n V(p,t)\}^{k-1},j_\xi^n W(p,t)]-\right.\\
\left.-(k-1)F^{(k)}(W(p,t))[\{j_\xi^n V(p,t)\}^k]\).
\end{multline}
Symbol $"[n+1]"$ means that we have dropped out all terms of order greater than $n+1$ from the
right-hand side, so $F^{[n+1]}$ is a polynomial of order $n+1$ in $\xi\in H$. Alternatively,
the dropping out procedure means that we replace
\begin{equation}\label{3.cor}
\{j_\xi^n V(p)\}^k\rightarrow
 \sum_{\substack{n_1+\cdots+n_k\le n+1\\ n_i\in\Bbb N}}B_{n_1,\cdots,n_k}
 \{j_\xi^{n_1} V(p),\cdots,j_\xi^{n_k} V(p)\},
\end{equation}
where the numbers $B_{n_1,\cdots,n_k}\in\R$ are chosen in such a way
that polynomials in the left and right-hand side of \eqref{3.cor} coincide
 up to order $\{\xi\}^{n+1}$ inclusively and the term $[\{j_\xi^n V(p,t)\}^{k-1},j_\xi^n W(p,t)]$ is
 treated analogously. The explicit expressions for these coefficients can be found using the
  formulas for higher order chain rule (Faa di Bruno type formulas, see e.g. \cite{faa,hajo}),
  but these expressions are lengthy and not essential for what follows, so we omit them.
 \par
 Note also that the
truncated jets $j_\xi^n V(p,t)$ are taken from the induction assumption. We seek for the solution
 of equation \eqref{3.jeteq} belonging to $L^2_{e^{n\theta_n+\theta_{n+1}}}(\R_-,H)$ for some $\theta_{n+1}$
 satisfying \eqref{2.gap} with $N$ replaced by $N_{n+1}$. Expanding \eqref{3.Fn} in series with respect to $\xi$,
 we get the recurrent equations for finding the
  "derivatives" $W^{(k)}_\xi(p,t):=W^{(k)}(p,t)[\{\xi\}^k]$:
\begin{multline}\label{3.rec}
\Dt W^{(k)}_\xi+AW^{(k)}_\xi-F'(W(p))W^{(k)}_\xi=\\=
\Phi(j_\xi^{k-1}W,j^{k-1}_\xi V),\  {\bf P}_{N_{n+1}}W^{(k)}_\xi\big|_{t=0}=0
\end{multline}
for $k\ge2$, where $\Phi$ is polynomial of order $k$ in $\xi$ which does not
contain $W^{(l)}_\xi$ with $l\ge k$. Thus, the functions $W^{(k)}_\xi$ can be, indeed, found recursively.
Moreover, the spectral gap assumption \eqref{4.gapnp1} guarantees that we can find $\theta_{n+1}$
satisfying \eqref{2.gap} with $N=N_{n+1}$ such that $\theta_{n+1}+n\theta_n$ also satisfies
 this condition. Therefore, Proposition \ref{Prop2.var} guarantees the existence and uniqueness of the
  homogeneous polynomials $W^{(k)}_\xi(p)$ satisfying
  \begin{equation}\label{3.W}
\|W^{(k)}_\xi(p)\|_{L^2_{e^{(\theta_{n+1}+k\theta_n)t}}(\R_-,H)}\le C\|\xi\|^k,
  \end{equation}
  for $k=1,\cdots,n+1$.
\par
To complete the proof of the theorem, we only need to verify that the jet $J_\xi W(p,t)$ satisfies
the compatibility conditions of order $n+1$. If this is verified, the rest of the proof coincides
 with the one given above for the case $n=2$. We postpone this verification till the next section.
 Thus, the theorem is proved by modulo of compatibility conditions.
\end{proof}

\begin{corollary}\label{Cor3.track} Let the assumptions of Theorem \ref{Th3.main} hold with $\mu>0$
 being small enough. Then the invariant manifold ${\bf P}_{N_{n}}\Cal M_{N_1}$ of the extended IF
  \eqref{3.IF-ext} possesses an exponential tracking property in $H_{N_{n}}$, i.e. for every
  solution $u_{N_{n}}(t)$ of \eqref{3.IF-ext} there exists the corresponding solution
  $\bar u_{N_{n}}$ belonging to this manifold such that
  \begin{equation}\label{3.track-red}
\|u_{N_{n}}(t)-\bar u_{N_{n}}(t)\|\le Ce^{-\theta_1 t}
  \end{equation}
  for some positive $C$ and $\theta_1$.
\end{corollary}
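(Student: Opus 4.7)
The plan is to reduce the corollary to the classical robustness theorem for normally hyperbolic invariant manifolds. First I would verify the exponential tracking for the \emph{true} IF \eqref{3.IF} on $H_{N_n}$ (the one built from $M_{N_n}$, not from $\tilde M_{N_n}$): any solution $u_{N_n}(t)$ of the true IF lifts via $M_{N_n}$ to a trajectory $u(t):=u_{N_n}(t)+M_{N_n}(u_{N_n}(t))$ of the original parabolic equation \eqref{3.1} living on $\Cal M_{N_n}$. Since $\Cal M_{N_1}\subset\Cal M_{N_n}$ is itself an IM for \eqref{3.1}, the exponential tracking in Definition \ref{Def2.IM} supplies $\bar u(t)\in\Cal M_{N_1}$ with $\|u(t)-\bar u(t)\|\le Ce^{-\theta_1 t}$; applying $\mathbf{P}_{N_n}$ and using the inclusion $\Cal M_{N_1}\subset\Cal M_{N_n}$ gives \eqref{3.track-red} for the true IF.

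Second, I would compare the true and the extended IF. By the third condition of Definition \ref{Def3.ext-IF} the two vector fields on $H_{N_n}$ differ by
\[
\mathbf{P}_{N_n}\bigl(F(u_{N_n}+\tilde M_{N_n}(u_{N_n}))-F(u_{N_n}+M_{N_n}(u_{N_n}))\bigr),
\]
whose $C^1_b$-norm is of order $\mu$ thanks to the global bounds on $F'$ and $F''$. Moreover, $\mathbf{P}_{N_n}\Cal M_{N_1}$ is uniformly normally hyperbolic as an invariant submanifold of the true IF: tangent directions sit essentially inside $H_{N_1}$ and are governed by rates bounded by $\lambda_{N_1}+L$, whereas the transverse directions live in $H_{N_n}\ominus H_{N_1}$ and contract at rate at least $\lambda_{N_1+1}-L$, the gap $\lambda_{N_1+1}-\lambda_{N_1}-2L>0$ being precisely \eqref{2.sg} at $N=N_1$. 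By the matching condition in the second item of Definition \ref{Def3.ext-IF}, $\mathbf{P}_{N_n}\Cal M_{N_1}$ is already invariant under \eqref{3.IF-ext}, and the restriction of \eqref{3.IF-ext} to it coincides with the $N_1$-dimensional IF generated by $\Cal M_{N_1}$.

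Third, I would invoke the Fenichel/Hirsch--Pugh--Shub robustness theorem (as in \cite{bates,Fen72,katok,RT96}): for $\mu$ sufficiently small, the extended IF \eqref{3.IF-ext} possesses a unique normally hyperbolic invariant manifold $C^1_b$-close to $\mathbf{P}_{N_n}\Cal M_{N_1}$ that attracts orbits exponentially at any rate strictly less than $\lambda_{N_1+1}-L$. Uniqueness, combined with the already established invariance of $\mathbf{P}_{N_n}\Cal M_{N_1}$, forces the two manifolds to coincide, and \eqref{3.track-red} follows with $\theta_1$ slightly below $\lambda_{N_1+1}-L$.

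The main obstacle is the \emph{non-compactness} of $\mathbf{P}_{N_n}\Cal M_{N_1}$, which forces me to rely on a global, non-compact version of the robustness theorem and to maintain uniform bounds on both the hyperbolic splitting and the perturbation. Uniformity of normal hyperbolicity is supplied by the global Lipschitz constant $L$ together with the spectral structure of $A$, while the global $C^1_b$-smallness of the perturbation is guaranteed by Theorem \ref{Th3.main}. If one prefers to avoid Fenichel theory altogether, the natural alternative is a direct Perron-type argument modelled on the proof of exponential tracking in Theorem \ref{Th2.IM}: seek $\bar u_{N_n}(t)=\phi(t)u_{N_n}(t)+v(t)$ with the cut-off $\phi$ from that proof and solve the resulting fixed-point equation for $v$ in a weighted space $L^2_{e^{\theta_1 t}}(\R,H_{N_n})$, exploiting the fact that the spectral-gap structure of $-A$ restricted to $H_{N_n}\ominus H_{N_1}$ is preserved up to an $O(\mu)$ perturbation.
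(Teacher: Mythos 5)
Your proposal is correct, but it takes a different primary route from the paper. Your main argument packages the corollary as an application of the Fenichel/Hirsch--Pugh--Shub persistence theorem for normally hyperbolic invariant manifolds: establish that $\mathbf{P}_{N_n}\Cal M_{N_1}$ is uniformly normally hyperbolic for the true IF (using the spectral gap $\lambda_{N_1+1}-\lambda_{N_1}>2L$), note that it remains invariant for the extended IF by the matching condition of Definition~\ref{Def3.ext-IF}, and conclude via uniqueness in the robustness theorem that the perturbed attracting NHIM coincides with $\mathbf{P}_{N_n}\Cal M_{N_1}$, so the exponential tracking built into the normal hyperbolicity theory applies. The paper explicitly acknowledges that this is the ``standard corollary'' of normal hyperbolicity, but then deliberately avoids it and instead sketches a self-contained Perron-type argument, re-using the machinery already developed in Theorem~\ref{Th2.IM}: it constructs $\bar{\Cal M}_{N_1}$ by solving the backward problem~\eqref{3.back} in $L^2_{e^{\theta t}}(\R_-,H_{N_n})$ via the contraction theorem (exploiting that the extended vector field is $C\mu$-close to the one built from $M_{N_n}$, together with Remark~\ref{Rem2.strange}), identifies $\bar{\Cal M}_{N_1}=\mathbf{P}_{N_n}\Cal M_{N_1}$ because the two sets are characterized by the same class of backward trajectories, and then obtains tracking by running the cut-off fixed-point argument of~\eqref{2.v}--\eqref{2.egg} for the extended IF. Your proposal anticipates this alternative in its last paragraph, and it is indeed the one the authors chose. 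The trade-off is essentially the one you identify: the Fenichel route is shorter and conceptually cleaner, but for a non-compact manifold one must invoke a global, uniform version of the persistence theorem (e.g.\ Bates--Lu--Zeng), where the uniform hyperbolicity bounds have to be supplied by hand from the spectral structure of $A$ and the global Lipschitz bound on $F$; the paper's Perron route is longer but stays entirely inside the weighted-space framework already set up, and produces the uniform estimates as a byproduct rather than as a hypothesis. One minor observation: your opening step, deducing tracking for the \emph{unperturbed} IF by lifting to $\Cal M_{N_n}$, applying the infinite-dimensional tracking of $\Cal M_{N_1}$, and projecting by $\mathbf{P}_{N_n}$, is a nice reduction but is not actually needed once you invoke the robustness theorem, which delivers tracking for the perturbed system directly.
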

\begin{proof} As we have already mentioned, this is the standard corollary of the fact that $\Cal M_{N_1}$ is
normally hyperbolic and, therefore, persists under small $C^1$-perturbations, see \cite{bates,Fen72,Hirsh,katok} and references therein. Nevertheless, for
 the convenience of the reader, we sketch below a direct proof without formal refereing to
  normal hyperbolicity.
\par
We first construct an invariant manifold $\bar{\Cal  M}_{N_1}$ with the base $H_{N_1}$ in $H_{N_{n}}$ for the extended IF.
 We do this exactly as in the proof of Theorem \ref{Th2.IM} by solving the backward problem
 \begin{equation}\label{3.back}
\Dt u_{N_{n}}+Au_{N_{n}}-{\bf P}_{N_{n}}F(u_{N_{n}}+\tilde M_{N_{n}}(u_{N_{n}}))=0,\
{\bf P}_{N_1}u_{N_{n}}=p
 \end{equation}
 in the space $L^2_{e^{\theta t}}(\R_-,H_{N_{n}})$ with $\theta=(\lambda_{N_1}+\lambda_{N_1+1})/2$.
  This equation is $(C\mu)$-closed to
 \begin{equation}\label{3.back1}
\Dt \bar u_{N_{n}}+A\bar u_{N_{n}}-{\bf P}_{N_{n}}
F(\bar u_{N_{n}}+M_{N_{n}}(\bar u_{N_{n}}))=0,\
{\bf P}_{N_1}\bar u_{N_{n}}=p
 \end{equation}
 in the $C^1$-norm (since $\tilde M_{N_{n}}$ is
  $\mu$-closed to $M_{N_{n}}$ due to Theorem \ref{Th3.main}). Thus, using Remark \ref{Rem2.strange}
  and the Banach contraction theorem, we can construct a unique solution $u_{N_{n}}(t)$ of \eqref{3.back}
  in the $(C\mu)$-neighbourhood of the corresponding solution $\bar u_{N_{n}}$ of problem
   \eqref{3.back1} and vice versa. This gives us the existence of the manifold $\bar{\Cal M}_{N_1}$ which
   is generated by all backward solutions of \eqref{3.back1} belonging to the space
   $L^2_{e^{\theta t}}(\R_-,H_{N_{n}})$. Since the solutions belonging to the invariant
    manifold ${\bf P}_{N_{n}}\Cal M_{N_1}$
    satisfy exactly the same property, we conclude that
    $\bar{\Cal M}_{N_1}={\bf P}_{N_{n}}\Cal M_{N_1}$.
\par
It remains to verify that the manifold $\bar{\Cal M}_{N_1}$ possesses an exponential tracking property. This can
be done also as in the proof of Theorem \ref{Th2.IM} by considering the analogue of equation \eqref{2.v}
 for system \eqref{3.IF-ext} and using again that $\tilde M_{N_{n}}$ is close to $M_{N_{n}}$ in
 the $C^1$-norm. This finishes the proof of the corollary.
\end{proof}

\begin{corollary}\label{Rem3.H2} Arguing as in Corollary \ref{Cor2.L2}, we check that the
extended IM $\widetilde{\Cal M}_{N_n}$ is also $C^{n,\eb}$-submanifold of $H^2:=D(A)$.
\end{corollary}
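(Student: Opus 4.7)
The plan is to repeat the argument of Corollary \ref{Cor2.L2} at each level of the inductive construction that produced $\widetilde{\Cal M}_{N_n}$. First, I would show that each of the building blocks of the Taylor jet $J^n_\xi W(p,0)$ actually takes values in $H^2 = D(A)$, not merely in $H$, and that the compatibility estimates \eqref{3.compd} (or their untruncated form \eqref{2.13}) remain valid when $H$ is replaced by $H^2$. The backward solutions $V(p,\cdot)$, $W(p,\cdot)$ and the derivative functions $W^{(k)}_\xi(p,\cdot)$ satisfy linear parabolic equations of the form $\Dt u + Au = g(t)$ on $(-\infty,0]$ with right-hand sides already controlled in weighted $L^2$-norms $L^2_{e^{\mu t}}(\R_-,H)$ (and Dirichlet-type constraints on ${\bf P}_{N_{n+1}}u|_{t=0}$). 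Applying the $L^2(H^2)$-maximal regularity estimate \eqref{2.ref2h2} on the interval $(-2,0)$, together with the bootstrap via the $C^\alpha$-maximal regularity already used in Corollary \ref{Cor2.L2}, one upgrades each such solution from $C_{e^{\mu t}}(\R_-,H)$ to $C_{e^{\mu t}}(\R_-,H^2)$, with an inequality of the same form but in the stronger norm.

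Next, I would evaluate the jets at $t=0$. Each coefficient $W^{(k)}(p,0)[\{\xi\}^k]$ then lies in $H^2$ and depends polynomially on $\xi$ with $H^2$-valued coefficients bounded uniformly in $p\in{\bf P}_{N_{n+1}}\Cal M_{N_1}$. Repeating the compatibility calculation of Section \ref{s4} (i.e.\ the verification promised after equation \eqref{3.W}) in the $H^2$-norm — which works identically since the maximal regularity estimate is the only place the space $H$ enters in a nontrivial way — produces the $H^2$-valued analogue of \eqref{3.compd} for the jets $J^n_\xi M_{N_n}(p) = {\bf Q}_{N_n}J^n_\xi W(p,0)$. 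The Whitney extension theorem applies with target space $Y = H^2$ (recall that no restriction is imposed on $\dim Y$), so we obtain a $C^{n,\eb}$-function $\widehat M_{N_n}: H_{N_n} \to H^2$ whose $n$-jet on ${\bf P}_{N_n}\Cal M_{N_1}$ agrees with the constructed one.

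Finally, the cut-off/mollification procedure \eqref{3.small} that converted $\widehat M_{N_n}$ into $\widetilde M_{N_n}$ is performed by a convolution in the finite-dimensional base $H_{N_n}$ together with multiplication by a scalar cut-off, so it commutes with the target space and preserves $H^2$-valuedness and the $C^{n,\eb}$-regularity. Closeness to $M_{N_n}$ in the $C^1_b$-norm with values in $H^2$ follows from the analogous estimates of Corollary \ref{Cor2.L2} applied to $M_{N_n}$ itself, combined with the mollification bounds already used in the proof of Theorem \ref{Th3.main}. The main obstacle is essentially bookkeeping: checking that the chain of maximal regularity estimates can be iterated through the recursive system \eqref{3.rec} without degrading the weighted exponents that enter the compatibility condition; once the parabolic smoothing is inserted at each stage (just as was done for $M_{N_1}$ in Corollary \ref{Cor2.L2}), the argument goes through verbatim.
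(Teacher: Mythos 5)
Your argument is correct and follows the route the paper intends: repeat the $L^2(H^2)$- and $C^\alpha(H)$-maximal regularity bootstrap of Corollary \ref{Cor2.L2} for each backward solution and each jet coefficient $W^{(k)}_\xi$, verify the compatibility bounds in the $H^2$-norm, apply the Whitney extension theorem with target $Y=H^2$ (permissible, since only $\dim X<\infty$ is required), and observe that the scalar cut-off and finite-dimensional mollification in \eqref{3.small} preserve $H^2$-valuedness and $C^{n,\eb}$-regularity. The paper gives no proof beyond the pointer to Corollary \ref{Cor2.L2}, and your expansion of that hint is exactly the argument it has in mind.
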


\section{Verifying the compatibility conditions}\label{s4}
The aim of this section is to show that the  jets $J_\xi^{n+1} W(p,t)$, $p\in{\bf P}_{N_{n+1}}H$, constructed
 via \eqref{3.jeteq}, satisfy the
 compatibility conditions up to order $n~+~1$ and, thus, to complete the proof of Theorem \ref{Th3.main}.
 We will proceed by induction with respect to the order $m\le n+1$.
 \par
 Indeed, the first order compatibility conditions are trivially satisfied since the functions
 $W(p,t)$ are $C^{1,\eb}$-smooth. Assume that the $m$th order conditions
 are satisfied for some $m\le n+1$ and for all $m_1\le m$
\begin{equation}\label{4.jms}
  J_\xi^{m_1} W(p_1)-J^{m_1}_{\delta+\xi}W(p)=
  O_{\theta_{n+1}+(m_1-1)\theta_n}\((\|\delta\|+\|\xi\|)^{m_1+\eb}\),
\end{equation}
for all $\xi\in H$, $p_1,p\in{\bf P}_{N_{n+1}}\Cal M_{N_1}$, $\eb>0$, $\delta:=p_1-p$ and some constant $C$ which
 is independent of $p,p_1$.   Using the fact that $V(p,t)=W(p,t)$ for all
 $p\in{\bf P}_{N_{n+1}}\Cal M_{N_1}$ together with the analogue of \eqref{4.jms} for the already
  constructed jets $J_\xi^m V(p,t)$, we end up with
\begin{multline}\label{4.indd}
  V(p_1)=W(p_1)=V(p)+j^{m_1}_\delta V(p)+O_{m_1\theta_n+\eb}(\|\delta\|^{m_1+\eb})=\\
  =W(p)+j^{m_1}_\delta W(p)+O_{\theta_{n+1}+(m_1-1)\theta_n+\eb}(\|\delta\|^{m_1+\eb})
\end{multline}
  for all $p_1,p\in {\bf P}_{N_{n+1}}\Cal M_{N_1}$, $\delta:=p_1-p$ and, therefore
   $v(t):=V(p_1,t)-V(p,t)$ satisfies
\begin{multline}\label{4.VW}
v=j^{m_1}_\delta V(p)+O_{m_1\theta_n+\eb}(\|\delta\|^{m_1+\eb})=\\=j^{m_1}_\delta W(p)+
O_{\theta_{n+1}+(m_1-1)\theta_n+\eb}(\|\delta\|^{m_1+\eb}),\\
j^{m_1}_\delta V(p)-j^{m_1}_\delta W(p)=O_{\theta_{n+1}+(m_1-1)\theta_n+\eb}(\|\delta\|^{m_1+\eb}).
\end{multline}
We now turn to the $(m+1)$th-jets and start with the following lemma which gives the
compatibility conditions in the particular case $\xi=0$.

\begin{lemma} Let the above assumptions hold. Then
\begin{equation}\label{4.W0}
v=W(p_1)-W(p)=j^{m+1}_\delta W(p)+O_{\theta_{n+1}+m\theta_n+\eb}(\|\delta\|^{m+1+\eb}),
\end{equation}
for all $p_1,p\in {\bf P}_{N_{n+1}}\Cal M_{N_1}$ and $\delta:=p_1-p$. Moreover,
\begin{equation}\label{4.F0}
F(V(p_1))=F^{[m+1]}(p,\delta)+O_{\theta_{n+1}+m\theta_n+\eb}(\|\delta\|^{m+1+\eb})
\end{equation}
for some $\eb>0$.
\end{lemma}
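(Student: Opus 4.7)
The plan is to establish \eqref{4.W0} first via Proposition \ref{Prop2.var} applied to the remainder $r(t) := W(p_1,t) - J^{m+1}_\delta W(p,t)$, and then to obtain \eqref{4.F0} as a by-product.

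First I would write down the PDE for $r$. Since ${\bf P}_{N_{n+1}}r(0)=0$, and since by the recursive construction \eqref{3.rec} together with the backward equation for $W(p_1)$ one has
\[
\Dt r + Ar = F(W(p_1)) - F^{[m+1]}(p,\delta),
\]
adding and subtracting $F'(W(p))\,r$ puts this in the equation-of-variations form required by Proposition \ref{Prop2.var}. Next I would apply Theorem \ref{Th2.3} to expand $F$ about $W(p)$ in the direction $v := V(p_1)-V(p) = W(p_1)-W(p)$ (using $V=W$ on $\Cal M_{N_1}$):
\[
F(W(p_1)) = F(W(p)) + \sum_{k=1}^{m+1}\frac{1}{k!}F^{(k)}(W(p))[v]^k + R_{m+1},
\]
with $\|R_{m+1}(t)\|\le C\|v(t)\|^{m+1+\eb}$ since $F\in C^\infty_b(H,H)$.

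The key algebraic step uses the inductive splittings $v=j^m_\delta V(p)+e_V=j^m_\delta W(p)+e_W$ from \eqref{4.VW} together with the polarisation-type identity
\[
F^{(k)}[v]^k - kF^{(k)}[\{j^m_\delta V\}^{k-1},j^m_\delta W] + (k-1)F^{(k)}[\{j^m_\delta V\}^k] = kF^{(k)}[\{j^m_\delta V\}^{k-1},e_W] + \sum_{j=2}^k \binom{k}{j}F^{(k)}[\{j^m_\delta V\}^{k-j},(e_V)^j].
\]
The combination defining $F^{[m+1]}$ in \eqref{3.Fn} is tailored precisely so that the left-hand side is exactly the $k$-th contribution to $F(W(p_1))-F^{[m+1]}(p,\delta)$ for $k\ge 2$. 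Combined with $v - j^{m+1}_\delta W(p)=r$, which cancels the $F'(W(p))\,r$ term, this recasts the PDE as
\[
\Dt r + Ar - F'(W(p))\,r = h(t), \qquad h(t) := \sum_{k=2}^{m+1}\frac{1}{k!}E_k + R_{m+1},
\]
with $E_k$ given by the right-hand side of the identity, and crucially $h$ independent of $r$.

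The core estimate is $\|h\|_{L^2_{e^{(\theta_{n+1}+m\theta_n+\eb)t}}} \le C\|\delta\|^{m+1+\eb}$. For the leading $k=2$ piece $F''(W(p))[j^m_\delta V,e_W]$, the $C_{e^{\theta_n t}}$-bound on $j^m_\delta V$ (from the first-order theory for $V$) and the $L^2_{e^{(\theta_{n+1}+(m-1)\theta_n+\eb)t}}$-bound on $e_W$ multiply via an $L^\infty\cdot L^2$ weighted Hölder inequality to give exactly the target weight and size $\|\delta\|^{m+1+\eb}$. For $k\ge 3$, the extra factors in $\{j^m_\delta V\}^{k-1}$ provide additional powers of $\|\delta\|$ which, combined with the spectral gap $\theta_{n+1}>\theta_n$ (possibly after shrinking $\eb$), absorb the higher-weight monomials into the target space; the residue $R_{m+1}$ is handled using $\|v\|_{C_{e^{\theta_n t}}}\le C\|\delta\|$ together with the same gap. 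Condition \eqref{4.gapnp1} ensures $\theta_{n+1}+m\theta_n$ lies in the spectral-gap interval \eqref{2.gap} required for Proposition \ref{Prop2.var} with $N=N_{n+1}$, whose application then yields \eqref{4.W0}. Finally \eqref{4.F0} follows by inserting \eqref{4.W0} into $F(V(p_1))-F^{[m+1]}(p,\delta) = F'(W(p))\,r + h$ and using the uniform bound $\|F'\|\le L$. The main obstacle is the weighted bookkeeping for the cross-terms with $k\ge 3$: one must carefully distribute the factors of each monomial between $C_{e^{\omega t}}$ and $L^2_{e^{\omega t}}$ norms so that the products close at the target weight, with the cancellation structure built into $F^{[m+1]}$ ensuring that only residues of the desired order in $e_V$ and $e_W$ remain.
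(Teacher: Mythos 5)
Your overall architecture matches the paper's: write the equation for the remainder $R=v-j^{m+1}_\delta W(p)$, show the forcing is $F'(V(p))R$ plus a term that is $O_{\theta_{n+1}+m\theta_n+\eb}(\|\delta\|^{m+1+\eb})$, then invoke Proposition~\ref{Prop2.var} using the gap~\eqref{4.gapnp1}, and read~\eqref{4.F0} off as a by-product. The polarisation-type identity you write is also correct as an algebraic fact and does capture the cancellation structure. However, there is a genuine gap in the weighted bookkeeping that makes the estimate of $h$ fail as written.

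The problem is the claim that $j^m_\delta V(p,\cdot)$ is bounded in $C_{e^{\theta_n t}}(\R_-,H)$ with norm $O(\|\delta\|)$ ``from the first-order theory for $V$''. This is true only for $j^1_\delta V=V'\delta$. For $m\ge 2$ the jet $j^m_\delta V$ contains the terms $V^{(k)}(p)[\{\delta\}^k]$, and by the inductive construction these live only in $L^2$ with a \emph{larger} exponent (roughly $k\theta_n$, see~\eqref{3.W} and its analogue for the $n$-th construction), i.e.\ they grow faster than $e^{-\theta_n t}$ as $t\to-\infty$ and hence are \emph{not} in $C_{e^{\theta_n t}}$. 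Consequently the $L^\infty\cdot L^2$ H\"older product for the $k=2$ term $F''[j^m_\delta V,e_W]$ already does not close at the target weight $\theta_{n+1}+m\theta_n$, and for $k\ge 3$ the situation only worsens, since the untruncated $\{j^m_\delta V\}^{k-1}$ has monomials of degree up to $m(k-1)$ in $\delta$ with correspondingly worse exponents. ``Extra powers of $\|\delta\|$'' cannot repair this: a function with the wrong growth rate fails to lie in $L^2_{e^{\omega t}}(\R_-)$ no matter how small its amplitude.

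What you are missing is the truncation device~\eqref{3.cor}, which the paper builds into the very definition of $F^{[m+1]}$ and which you dropped when forming your identity. In~\eqref{3.Fn} every occurrence of $\{j^m_\delta V\}^{k-1}$ and $[\{j^m_\delta V\}^{k-1},j^m_\delta W]$ is to be read as its degree-$(m+1)$ truncation, i.e.\ as a sum of monomials $\{j^{n_1}_\delta V,\dots,j^{n_k}_\delta V\}$ with $n_1+\cdots+n_k\le m+1$. It is precisely this constraint on the total degree that keeps the exponent of each monomial bounded by $\theta_{n+1}+m\theta_n+\eb$ (the paper computes $(n_1+\cdots+n_{k-1})\theta_n+\theta_{n+1}+(n_k-1)\theta_n+\eb\le\theta_{n+1}+m\theta_n+\eb$), and again it is~\eqref{4.VW} that lets one replace truncated jets by $v$ at the cost of controllable errors. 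Without the truncation your identity is manipulating a different object than $F^{[m+1]}$, and the associated remainder $h$ does not land in the required weighted space. So the plan is salvageable, but the polarisation identity must be carried out at the level of the truncated expressions, and the per-monomial weight estimate must replace the blanket $C_{e^{\theta_n t}}$-bound.
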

\begin{proof} Let $R:=v-j_\delta^{m+1}W(p)$. Then, by the definition \eqref{3.jeteq}, this function solves
\begin{equation}
\Dt R+A R=F(V(p_1))-F^{[m+1]}(p,\delta),\ \ {\bf P}_{N_{n+1}}R\big|_{t=0}=0.
\end{equation}
Let us study the term $F^{[m+1]}(p,\delta)$ at the right-hand side (which is defined by \eqref{3.Fn}).
 Using  \eqref{4.VW} and the trick \eqref{3.cor},
we may replace $j_\delta^{m} V(p)$ and $j_\delta^{m} W(p)$ by $v$ in all terms in \eqref{3.Fn} which contain
the second and higher derivatives of $F$ (the error
will be of order $\|\delta\|^{m+1+\eb}$). Actually, we cannot do this in the term with the
first derivative at the moment since this requires \eqref{4.VW} for $W$ of order $m+1$ which
 we are now verifying. This, gives
\begin{multline}\label{4.F2m}
F^{[m+1]}(p,\delta)=F(V(p))+\\+F'(V(p))j^{m+1}_\delta W(p)+\sum_{k=2}^{m+1}\frac1{k!}F^{(k)}(V(p))[\{v\}^k]
+O_{\theta_{n+1}+m\theta_n+\eb}(\|\delta\|^{m+1+\eb}).
\end{multline}
 Indeed,  let us consider the terms in \eqref{3.Fn} containing $j^m_\delta W$ only (the terms without it are analogous, but simpler). Using the analogue of \eqref{3.cor}:
 \begin{multline}
 [\{j^m_\delta V (p)\}^{k-1}, j^m_\delta W(p)]\rightarrow\\\rightarrow\sum_{\substack{n_1+\cdots +n_k\le m+1\\n_i\in\Bbb N}}B'_{n_1,\cdots,n_k}\{j^{n_1}_\delta V(p),\cdots,j^{n_{k-1}}_\delta V(p), j_\delta^{n_k} W(p)\},
 \end{multline}
 the growth exponent of the remainder does not exceed
 $$
 \(n_1+\cdots+n_{k-1}\)\theta_n+\theta_{n+1}+(n_k-1)\theta_n+\eb\le
 \theta_{n+1}+m\theta_n+\eb,
 $$
 where we have implicitly used our induction assumptions \eqref{4.VW} and decreased the exponent $\eb$ if necessary.
 \par
 Using now the Taylor theorem for $F\in C^{m+1,\eb}$ together with estimate \eqref{2.b-est} for $v$,
  we infer that
 $$
 F(V(p_1))-F^{[m+1]}(p,\delta)=F'(V(p))R+O_{\theta_{n+1}+m\theta_n+\eb}\(\|\delta\|^{m+1+\eb}\)
 $$
 and, therefore, the function $R$ solves
 \begin{equation}\label{4.R0}
\Dt R+AR-F'(V(p))R=O_{\theta_{n+1}+m\theta_n+\eb}(\|\delta\|^{m+1+\eb}), \
\ {\bf P}_{N_{n+1}}R\big|_{t=0}=0.
 \end{equation}
Since by the induction assumption $\theta_n<\lambda_{N_n+1}-L$, assumption \eqref{4.gapnp1} guarantees
 the existence of $\theta_{n+1}$  and $\eb>0$ such that $\theta_{n+1}+m\theta_n+\eb$ satisfies \eqref{2.gap}
 with $N$ replaced by $N_{n+1}$. Thus, Proposition \ref{Prop2.var} gives the estimate
 $$
 \|R\|_{L^2_{e^{(\theta_{n+1}+m\theta_n+\eb)t}}(\R_-,H)}\le C\|\delta\|^{m+1+\eb}
 $$
 and \eqref{4.W0} is proved. Estimate \eqref{4.F0} is now a straightforward corollary of
 \eqref{4.F2m} and the Taylor theorem
  (since we are now allowed to replace $j_\delta^{m+1}W$ by $v$). Thus, the Lemma is proved.
\end{proof}
We now turn to the general case $\xi\ne0$. To this end we need the following key lemma.

\begin{lemma}\label{Lem4.key} Let the above assumptions hold. Then, the following formula is satisfied:
\begin{multline}\label{4.key}
F^{[m+1]}(p_1,\xi)-F^{[m+1]}(p,\xi+\delta)=\\F'(V(p))\(j_\delta^{m+1}W(p)+j^{m+1}_\xi W(p_1)-
j_{\xi+\delta}^{m+1}W(p)\)+\\+O_{\theta_{n+1}+m\theta_n+\eb}\((\|\delta\|+
\|\xi\|)^{m+1+\eb}\),
\end{multline}
where $\xi\in H$, $p_1,p\in{\bf P}_{N_{n+1}}\Cal M_{N_1}$ and $\delta=p_1-p$.
\end{lemma}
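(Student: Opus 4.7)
The plan is to verify \eqref{4.key} by expanding every factor in $F^{[m+1]}(p_1,\xi)$ around the base point $V(p)$ and then invoking the induction hypotheses (compatibility of the $V$-jets up to order $m+1$ from \eqref{3.compd3} and of the $W$-jets up to order $m$ from \eqref{4.jms}) to rewrite each piece in terms of objects at $p$ with argument $\xi+\delta$. The guiding intuition is that the definition \eqref{3.Fn} of $F^{[m+1]}$ is tailored to be the truncated chain-rule expansion of $F\circ W$, so \eqref{4.key} is essentially the $(m+1)$-order chain-rule version of the already-proved compatibility for the $W$-jets themselves, with $F'(V(p))$ playing the role of the linearization.

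Concretely, I would proceed as follows. First, using $v:=V(p_1)-V(p)=j_\delta^{m+1}V(p)+O_{\theta_{n+1}+m\theta_n+\eb}(\|\delta\|^{m+1+\eb})$ (which follows from \eqref{4.W0} together with $V=W$ on $\Cal M_{N_1}$), Taylor-expand each factor $F^{(k)}(V(p_1))$ appearing in the definition \eqref{3.Fn} around $V(p)$ up to order $m+1-k$. The corresponding remainder lies in the weighted space $L^2_{e^{(\theta_{n+1}+m\theta_n+\eb)t}}(\R_-,H)$ by the smoothness of $F$ and the a priori bounds \eqref{3.W} on the component jets. Second, use the compatibility conditions \eqref{4.jms} at order $m$ to substitute
\[
j^m_\xi V(p_1)=j^m_{\xi+\delta}V(p)-j^m_\delta V(p)+O_{m\theta_n+\eb}\bigl((\|\xi\|+\|\delta\|)^{m+\eb}\bigr),
\]
and similarly for $j^m_\xi W(p_1)$, inside all higher-derivative terms ($k\ge 2$). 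Because these arguments are always multiplied by $F^{(k)}$ with $k\ge 2$, one extra factor of $\|\xi\|+\|\delta\|$ is always available to absorb the $O$-loss, so compatibility of the $W$-jets only up to order $m$ suffices here.

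For the first-derivative piece $F'(V(p_1))[j^{m+1}_\xi W(p_1)]$ I would Taylor-expand only the coefficient $F'(V(p_1))$ around $V(p)$ while leaving $j^{m+1}_\xi W(p_1)$ untouched, since its compatibility is precisely what we are proving. After applying the truncation rule \eqref{3.cor} to discard monomials of total degree $>m+1$ in $(\xi,\delta)$, every polynomial contribution in the expansion of $F^{[m+1]}(p_1,\xi)$ matches the analogously re-expanded $F^{[m+1]}(p,\xi+\delta)$, with one exception: the leading piece $F'(V(p))[j^{m+1}_\xi W(p_1)]$ versus $F'(V(p))[j^{m+1}_{\xi+\delta}W(p)]$. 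The residual correction $F'(V(p))[j^{m+1}_\delta W(p)]$ in the right-hand side of \eqref{4.key} is then produced by feeding \eqref{4.W0} into the zeroth-order piece $F(V(p_1))-F(V(p))$, exactly as in the $\xi=0$ case \eqref{4.F0}. Collecting these three first-order contributions yields the claimed formula.

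The main obstacle is the algebraic bookkeeping: one must verify that the coefficients produced by a Faà-di-Bruno-style reorganization of the Taylor-expanded $F^{(k)}(V(p_1))$ factors, combined with the truncation \eqref{3.cor}, reproduce term-by-term the corresponding decomposition of $F^{[m+1]}(p,\xi+\delta)$. The non-symmetry of \eqref{3.Fn} (mixing $V$-jets with a single $W$-jet) makes this matching slightly delicate, but it is exactly the structure that allows the $W'$-type factor to survive as a single $F'$-term. A secondary bookkeeping issue is tracking the weighted-norm exponent: a product of $k$ factors $j_\cdot V$ has growth $e^{-k\theta_n t}$ while a single $W$-jet factor upgrades the exponent to $\theta_{n+1}+(k-1)\theta_n$, and the gap condition \eqref{4.gapnp1} is exactly what guarantees that $\theta_{n+1}+m\theta_n+\eb$ still satisfies \eqref{2.gap} with $N=N_{n+1}$, so that Proposition \ref{Prop2.var} will apply in the subsequent step of the induction.
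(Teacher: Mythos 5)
Your approach is essentially the paper's: Taylor-expand the coefficients $F^{(k)}(V(p_1))$ around $V(p)$, use the order-$m$ compatibility from \eqref{4.jms} and \eqref{3.compd3} inside the $k\ge 2$ terms (where an extra multiplicative factor of $\|\xi\|+\|\delta\|$ absorbs the $O$-loss), expand only the coefficient in the $F'(V(p_1))j_\xi^{m+1}W(p_1)$ term, and let \eqref{4.W0} and \eqref{4.F0} generate the $F'(V(p))j^{m+1}_\delta W(p)$ correction. The weighted-norm bookkeeping you describe is also correct, and the role of \eqref{4.gapnp1} in keeping $\theta_{n+1}+m\theta_n+\eb$ inside the admissible range is exactly right.

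However, you defer what you call ``the main obstacle'' and simply assert that after truncation the two expansions cancel term-by-term; that assertion is the actual content of the lemma and cannot be left as routine. The definition \eqref{3.Fn} is not a symmetric polarization: each degree-$k$ monomial contains at most one $W$-jet factor, with weights $k$ and $-(k-1)$ that are specifically tuned. Concretely, after Taylor-expanding $F^{(l)}(V(p_1))$ around $V(p)$ and reindexing, the monomial $F^{(k)}(V(p))[\{j^m_\delta V(p)\}^{k-l}, j^m_\xi W(p_1), \{j^m_\xi V(p_1)\}^{l-1}]$ appears in the re-expansion of $F^{[m+1]}(p_1,\xi)$ with coefficient $lC_k^l$, while the binomial expansion \eqref{2.1} of $F^{[m+1]}(p,\xi+\delta)$ (after splitting $j^m_{\xi+\delta}$ via the induction hypothesis) gives coefficient $kC_{k-1}^{l-1}$; likewise $F^{(k)}(V(p))[\{j^m_\delta V(p)\}^{k-l}, \{j^m_\xi V(p_1)\}^l]$ appears with coefficients $-(l-1)C_k^l$ and $kC_{k-1}^l-(k-1)C_k^l$ respectively. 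The cancellation rests on the identities $lC_k^l=kC_{k-1}^{l-1}$ and $-(l-1)C_k^l=kC_{k-1}^l-(k-1)C_k^l$, and a different choice of weights in \eqref{3.Fn} would break it while the informal ``everything must match'' argument would still sound plausible. You must also explicitly isolate the $l=0$ terms of the binomial expansion and show, via \eqref{4.F0}, that they reconstitute $F^{[m+1]}(p,\delta)-F'(V(p))j^{m+1}_\delta W(p)$; this is where the $j^{m+1}_\delta W(p)$ term in \eqref{4.key} actually originates, and your proposal only gestures at it. Without the coefficient-by-coefficient verification the crux of the lemma remains unestablished.
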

\begin{proof} Indeed, according to the definition \eqref{3.Fn} and formula \eqref{4.F0}, we have
\begin{multline}
F^{[m+1]}(p_1,\xi)=F^{[m+1]}(p,\delta)+F'(V(p_1))j_\xi^{m+1}W(p_1)+\\+
\sum_{l=2}^{m+1}\frac1{l!}\(lF^{(l)}(V(p_1))[j^m_\xi W(p_1),\{j_\xi^{m}V(p_1)\}^{l-1}]-\right.\\\left.
(l-1)F^{(l)}(V(p_1))[\{j_\xi^{m}V(p_1)\}^{l}]\)+
O_{\theta_{n+1}+m\theta_n+\eb}\((\|\xi\|+\|\delta\|)^{m+1+\eb}\).
\end{multline}
We recall that, according to our agreement and formulas \eqref{3.cor},  the right-hand side does
 not contain the terms of order larger than $m+1$. Expanding now the derivatives $F^{(l)}(V(p_1))$
 into Taylor series around $V(p)$ and using \eqref{4.VW}, we get
 \begin{multline}
F^{[m+1]}(p_1,\xi)=F^{[m+1]}(p,\delta)+F'(V(p))(j_\xi^{m+1}W(p_1)-j^m_\xi W(p_1))+\\+
\sum_{l=1}^{m+1}\sum_{k=l}^{m+1}\frac1{l!(k-l)!}\(lF^{(k)}(V(p))
[\{j^m_\delta V(p)\}^{k-l},j^m_\xi W(p_1),\{j^m_\xi V(p_1)\}^{l-1}]-\right.\\\left.(l-1)F^{(k)}(V(p))
[\{j^m_\delta V(p)\}^{k-l},\{j^m_\xi V(p_1)\}^{l}]\)+\\+O_{\theta_{n+1}+m\theta_n+\eb}\((\|\xi\|+\|\delta\|)^{m+1+\eb}\).
 \end{multline}
Finally, changing the order of summation, we arrive at
\begin{multline}\label{4.rep}
F^{[m+1]}(p_1,\xi)=F^{[m+1]}(p,\delta)+F'(V(p))j_\xi^{m+1}W(p_1)+\\+
\sum_{k=2}^{m+1}\frac1{k!}\sum_{l=1}^k C^l_k\(lF^{(k)}(V(p))
[\{j^m_\delta V(p)\}^{k-l},j^m_\xi W(p_1),\{j^m_\xi V(p_1)\}^{l-1}]-\right.\\\left.(l-1)F^{(k)}(V(p))
[\{j^m_\delta V(p)\}^{k-l},\{j^m_\xi V(p_1)\}^{l}]\)+\\+O_{\theta_{n+1}+m\theta_n+\eb}\((\|\xi\|+\|\delta\|)^{m+1+\eb}\)
\end{multline}
Let us now look to the term $F^{[m+1]}(p,\xi+\delta)$. According to \eqref{3.Fn}, we have
\begin{multline}\label{4.prep}
F^{[m+1]}(p,\xi+\delta)=F(V(p))+F'(V(p))j_{\xi+\delta}^{m+1}W(p)+\\+
\sum_{k=2}^{m+1}\frac1{k!}\(kF^{(k)}(V(p))[j^m_{\xi+\delta} W(p),\{j_{\xi+\delta}^{m}V(p)\}^{k-1}]-\right.\\\left.
(k-1)F^{(k)}(V(p))[\{j_{\xi+\delta}^{m}V(p)\}^{k}]\).
\end{multline}
From the induction assumption, the compatibility assumptions \eqref{4.jms} hold for $j_{\xi+\delta}^{m_1} W$
 and  give
$$
j_{\xi+\delta}^{m_1}W(p)=j_\delta^{m_1} W(p)+j_\xi^{m_1} W(p_1)+O_{\theta_{n+1}+(m_1-1)\theta_n+\eb}
\((\|\delta\|+\|\xi\|)^{m_1+\eb}\)
$$
for all $m_1\le m$ and the analogous identities hold also for $j_{\xi+\delta}^{m_1} V$:
$$
j_{\xi+\delta}^{m_1}V(p)=j_\delta^{m_1} V(p)+j_\xi^{m_1} V(p_1)+O_{m_1\theta_{n}+\eb}
\((\|\delta\|+\|\xi\|)^{m_1+\eb}\).
$$
Moreover, using \eqref{4.VW}, we may also get
$$
j_{\xi+\delta}^{m_1}W(p)=j_\delta^{m_1} V(p)+j_\xi^{m_1} W(p_1)+O_{\theta_{n+1}+(m_1-1)\theta_n+\eb}
\((\|\delta\|+\|\xi\|)^{m_1+\eb}\)
$$
for all $m_1\le m$. Inserting these formulas to \eqref{4.prep}, we arrive at
\begin{multline}\label{4.prep1}
F^{[m+1]}(p,\xi+\delta)=F(V(p))+F'(V(p))j_{\xi+\delta}^{m+1}W(p)+\\+
\sum_{k=2}^{m+1}\frac1{k!}\(kF^{(k)}(V(p))[j^m_{\delta} V(p)+j^m_{\xi} W(p_1),
\{j^m_{\delta} V(p)+j^m_{\xi} V(p_1)\}^{k-1}]-\right.\\\left.
(k-1)F^{(k)}(V(p))[\{j^m_{\delta} V(p)+j^m_{\xi} V(p_1)\}^{k}]\)+\\+O_{\theta_{n+1}+m\theta_{n}+\eb}
\((\|\delta\|+\|\xi\|)^{m+1+\eb}\).
\end{multline}
Using the binomial formula \eqref{2.1}, we arrive at
\begin{multline}\label{4.prep2}
F^{[m+1]}(p,\xi+\delta)=F(V(p))+F'(V(p))j_{\xi+\delta}^{m+1}W(p)+\\+
\sum_{k=2}^{m+1}\frac1{k!}\(\sum_{l=1}^{k}k C_{k-1}^{l-1} F^{(k)}(V(p))[j^m_{\xi} W(p_1),
\{j^m_{\delta} V(p)\}^{k-l},\{j^m_{\xi} V(p_1)\}^{l-1}]+\right.\\\left.+\sum_{l=0}^{k-1}k C_{k-1}^{l}
F^{(k)}(V(p))[j^m_{\delta} V(p),
\{j^m_{\delta} V(p)\}^{k-l-1},\{j^m_{\xi} V(p_1)\}^{l}]-\right.\\\left.-
\sum_{l=0}^k(k-1)C_k^lF^{(k)}(V(p))[\{j^m_{\delta} V(p)\}^{k-l},\{j^m_{\xi} V(p_1)\}^{l}]\)+\\+O_{\theta_{n+1}+m\theta_{n}+\eb}.
\((\|\delta\|+\|\xi\|)^{m+1+\eb}\).
\end{multline}
We need to compare \eqref{4.rep} and \eqref{4.prep2}. To this end, we first note that
$$
lC_k^l=kC_{k-1}^{l-1}
$$
and, therefore, the terms containing the jets of $W$ in these two formulas coincide. Thus,
we only need to look at the terms without jets of $W$. In the case $l=k$, we have only one term in the right-hand side of
 \eqref{4.prep2} which obviously coincides with the analogous term in \eqref{4.rep}. Let us now look at
 the terms with $l=1,\cdots,k-1$. Due to the obvious identity
$$
-(l-1)C^l_k=kC^{l}_{k-1}-(k-1)C^l_k,
$$
these terms again coincide. Thus, it remains to look at the extra terms which correspond to $l=0$
 in \eqref{4.prep2} and which are absent in the sums of \eqref{4.rep}. Finally, using
  \eqref{4.VW} and \eqref{4.F0},
we get the following identity involving these extra terms:
\begin{multline}
F(V(p))+\sum_{k=2}^{m+1}\frac1{k!}F^{(k)}(V(p))[\{j^{m}_\delta V(p)\}^k]=
F^{[m+1]}(p,\delta)-\\-F'(V(p))j_\delta^{m+1}W(p)+
O_{\theta_{n+1}+m\theta_{n}+\eb}
\((\|\delta\|+\|\xi\|)^{m+1+\eb}\).
\end{multline}
This gives the identity
\begin{multline}
F^{[m+1]}(p_1,\xi)-F'(V(p))j^{m+1}_\xi W(p_1)=F^{[m+1]}(p,\xi+\delta)-\\-
F'(V(p))\(j_{\xi+\delta}^{m+1}W(p)-j_\delta^{m+1}W(p)\)+
O_{\theta_{n+1}+m\theta_{n}+\eb}
\((\|\delta\|+\|\xi\|)^{m+1+\eb}\)
\end{multline}
and finishes the proof of the lemma.
\end{proof}
We are now ready to finish the check of the compatibility conditions. Note that, due to \eqref{4.W0}, we have
\begin{multline}
J^{m+1}_\xi W(p_1)-J^{m+1}_{\xi+\delta}W(p)=\\j_\delta^{m+1}W(p)+j^{m+1}_\xi W(p_1)-
j_{\xi+\delta}^{m+1}W(p)+O_{\theta_{n+1}+m\theta_n+\eb}\((\|\delta\|+\|\xi\|)^{m+1+\eb}\).
\end{multline}
Let finally $U(t):=J^{m+1}_\xi W(p_1)-J^{m+1}_{\xi+\delta} W(p)$. Then, according to  definition \eqref{3.Wjet}, Lemma
\ref{Lem4.key} and the fact that $\delta=p_1-p$, this function solves the equation
\begin{multline}
\Dt U+A U-F'(V(p))U=\\=
O_{\theta_{n+1}+m\theta_n+\eb}\(\|\delta\|+\|\xi\|)^{m+1+\eb}\),\ \ {\bf P}_{N_{n+1}}U\big|_{t=0}=0
\end{multline}
and by Proposition \ref{Prop2.var}, we arrive at
\begin{equation}
J^{m+1}_\xi W(p_1)-J^{m+1}_{\xi+\delta} W(p)=O_{\theta_{n+1}+m\theta_n+\eb}\((\|\delta\|+\|\xi\|)^{m+1+\eb}\).
\end{equation}
Thus, the $(m+1)$th order compatibility conditions for $J^{m+1}_\xi W(p)$ are verified. The induction with respect
 to $m$ gives us that $J_\xi^{n+1} W(p)$ also satisfies the compatibility conditions (of course, we cannot
  take $m>n$ since we need the compatibility conditions of order $m$ for $J^m_\xi V(p)$ to proceed).
  This completes the proof of our main Theorem \ref{Th3.main}.

\section{Examples and concluding remarks}\label{s5}
In this section we give several examples to the proved main theorem as well as its reinterpretations
 and state some interesting problems for further study. We start with the application to 1D
  reaction-diffusion equation.
\begin{example}\label{Ex6.1} Let us consider the following reaction-diffusion system in 1D domain
 $\Omega=(-\pi,\pi)$:
\begin{equation}\label{5.RDS}
\Dt u=a\partial_x^2 u-f(u),\ \ u\big|_{\Omega}=0,\ \ u\big|_{t=0}=u_0,
\end{equation}
where $u$ is an unknown function, $a>0$ is a given
viscosity parameter,
and $f(u)$ is a given smooth function satisfying $f(0)=0$ and  some
 dissipativity conditions, for instance,
  $$
  f(u)u\ge-C+\alpha|u|^2,\  u\in\R.
  $$
for some $C$ and $\alpha>0$ (e.g. $f(u)=u^3-u$ as in the case of real Ginzburg-Landau equation). Then,
due to the maximum principle, we have the following dissipative estimate for the solutions of \eqref{5.RDS}:
\begin{equation}\label{5.inf}
\|u(t)\|_{L^\infty}\le \|u_0\|_{L^\infty}e^{-\alpha t}+C_*,
\end{equation}
where the constant $C_*$ is independent of $u_0$, see, e.g. \cite{BV92,CV02,T97}. Thus, the associated
 solution semigroup $S(t)$ acting in the phase space $H:=H^1_0(\Omega)$ possesses an absorbing set in $C(\bar\Omega)$ and cutting-off the nonlinearity outside of this ball, we may assume without
  loss of generality that $f\in C_0^\infty(\R)$.
  \par
  After this transform, equation \eqref{5.RDS} can be considered as an abstract parabolic equation
   \eqref{3.1} in the Sobolev space $H=H^1_0(\Omega)$. Since this space is an algebra with respect to
    point-wise multiplication (since we have only one spatial variable), the corresponding non-linearity
    $F(u)(x):=f(u(x))$ is $C^\infty$-smooth and all its derivatives are globally bounded.
    \par
    Finally, the linear operator $A$ in this example is $A=-a\partial_x^2$ endowed with
    the Dirichlet boundary conditions. Obviously, this operator is self-adjoint, positive definite
    and its inverse is compact. Moreover, its eigenvalues
    $$
    \lambda_k=a k^2,\ \ k\in\Bbb N
    $$
 satisfy \eqref{3.sginf}. Thus, our main theorem \ref{Th3.main} is applicable here and, therefore, problem
 \eqref{5.RDS} possesses an IM $\Cal M_{N_1}$ of smoothness $C^{1,\eb}$ for some $\eb>0$ and, for every
 $n\in\Bbb N$, this IM can be extended to a manifold $\widetilde {\Cal M}_{N_n}$ of regularity $C^{n,\eb_n}$,
 $\eb_n>0$, in the sense of Definition \ref{Def3.ext-IF}.
\end{example}
\begin{remark}\label{Rem5.gen} Our general theorem is applicable not only for a scalar
re\-acti\-on-diffusion equation \eqref{5.RDS}, but also for systems where the analogue of \eqref{5.inf} is known,
for instance, for the case of 1D complex Ginzburg-Landau equation (however, one should be careful in the
 case where the diffusion matrix is not self-adjoint and especially when it contains non-trivial
  Jordan cells. In this case, even Lipschitz IM may not exist, see \cite{kwa} for more details).
\par
A bit unusual choice of the phase space $H=H^1_0(\Omega)$ (instead of the natural one $H=L^2(\Omega)$
is related with the fact that we need  $H$ to be an algebra in order to define Taylor jets for
 the nonlinearity $F$ and to verify that it is $C^\infty$. This however may be relaxed in
  applications since backward solutions of \eqref{3.IF} and \eqref{2.var} are usually smooth
   in space and time if the non-linearity $f$ is smooth, so the Taylor jets for $V(p,t)$ will be
    well-defined even if we consider $L^2(\Omega)$ as a phase space and the theory works with
     minimal changes. This observation may be useful if we want to remove the assumption $f(0)=0$
     in \eqref{5.RDS}, but in order to avoid technicalities, we prefer not to go further in this direction here.
     \par
  The restriction to 1D case is motivated by the fact that the spectral gap condition
   \eqref{3.sginf} is naturally satisfied by the Laplacian in 1D case only (it is an open problem already in 2D case).
\par
   If we consider higher-order operators, say bi-Laplacian then the analogous result holds also in 3D. The typical example
   here is given by Swift-Hohenberg equation in a bounded domain $\Omega\subset\R^3$:
   $$
   \partial_t u=-(\Delta+1)^2 u+u-u^2,\  u\big|_{\partial\Omega}=\Delta u\big |_{\partial\Omega}=0,
   $$
   where the spectral gap condition \eqref{3.sginf} is also satisfied, see \cite{Z14}
   and references therein. We also note that although our main theorem is stated and proved
   for the case where $F$ maps $H$ to $H$, it can be generalized in a very straightforward way to the
   case where the operator $F$ decreases smoothness and maps $H$ to $H^{-s}:=D(A^{-s/2})$
   for some $s\in(0,2)$.
   The spectral gap assumption \eqref{3.sginf} should be replaced by
   $$
   \limsup_{n\to\infty}
   \left\{\frac{\lambda_{n+1}-\lambda_n}{\lambda_{n+1}^{s/2}+\lambda_n^{s/2}}\right\}=\infty.
   $$
After this extension, our theorem becomes applicable to equations which contain spatial derivatives in the
 non-linearity. Typical example of such applications is 1D Kuramoto-Sivashinski equation
 $$
 \partial_t u+a\partial_x^2 u+\partial^4_x u+u\partial_x u=0, \ \Omega=(-\pi,\pi),\ \ a>0
 $$
 endowed with Dirichlet or periodic boundary conditions, see \cite{Z14} for more details.
\end{remark}
\begin{remark}\label{Rem5.spa} As we have mentioned in the introduction, there is an essential recent
 progress in constructing  IMs for concrete classes of parabolic equations
 which do not satisfy the spectral gap conditions (such as scalar reaction-diffusion equations
  in higher dimensions, 3D Cahn-Hilliard or complex Ginzburg-Landau equations,
  various modifications of Navier-Stokes systems, 1D reaction-diffusion-advection systems, etc.). The techniques developed in our paper
   is not directly applicable to such problems (in particular, our technique is strongly based on the
    Perron method of constructing the IMs and it is not clear how to use
    the Perron method here since we do not have the so-called absolute normal hyperbolicity in the most
     part of equations mentioned above, see \cite{K18,KZ15} for more details). However, we believe that the
      proper modification of our method would allow to cover these cases as well. We return to this
       problem somewhere else.
\end{remark}
We now give an alternative (probably more transparent and more elegant) formulation of Theorem \ref{Th3.main}. We
recall that in Theorem \ref{Th3.main}, we have directly constructed a smooth extended IF \eqref{3.IF-ext}
for the initial equation \eqref{3.1}. This extended IF captures all non-trivial
 dynamics of \eqref{3.1}, but the associated smooth extended IM ${\Cal M}_n$ is not associated
  with the "true" IM of any system of the form \eqref{3.1}. This drawback can be easily corrected
   in more or less standard way which leads to the following reformulation of our main result.
   \begin{corollary}\label{Cor5.eq} Let the assumptions of Theorem \ref{Th3.main} be satisfied and let $\Cal M_{N_1}$
   be the $C^{1,\eb_1}$-smooth IM of equation \eqref{3.1} which corresponds to the first spectral gap.
   Then, for every $n\in\Bbb N$, $n>1$, there exists a modified nonlinearity $\widetilde F: H\to H$
   which belongs to $C^{n-1,\eb_n}_b(H,H)$ for some $\eb_n>0$ such that
\par
1) The initial IM $\Cal M_{N_1}$ is simultaneously an IM for the modified equation
\begin{equation}\label{5.maineq}
\Dt u+Au=\widetilde F_n(u).
\end{equation}

\par
2) Equation \eqref{5.maineq} possesses a $C^{n,\eb_n}$-smooth IM $\widetilde{\Cal M}_{N_n}$ of dimension
 $N_n$ such that the initial IM $\Cal M_1$ is a normally hyperbolic globally
 stable submanifold of $\widetilde{\Cal M}_{N_n}$.
\par
3) The nonlinearity $\widetilde F_n(u)$ depends on the variable $u_{N_n}:={\bf P}_{N_n}u$ only and
the IF associated with the IM $\widetilde{\Cal M}_{N_n}$ is given by \eqref{3.IF-ext} where $K_2$ is replaced by $N_n$.
   \end{corollary}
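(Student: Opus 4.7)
The plan is to build $\widetilde F_n$ directly from the $C^{n,\eb}$ extension $\widetilde M_{N_n}:H_{N_n}\to{\bf Q}_{N_n}H$ produced in Theorem \ref{Th3.main} (which by Corollary \ref{Rem3.H2} also takes values in $D(A)$), engineered so that along \eqref{5.maineq} the \emph{off-manifold} coordinate $z:={\bf Q}_{N_n}u-\widetilde M_{N_n}(u_{N_n})$ satisfies the purely linear decoupled equation $\partial_t z+Az=0$ on ${\bf Q}_{N_n}H$. Writing $u_{N_n}:={\bf P}_{N_n}u$, the natural choice is
\begin{multline*}
\widetilde F_n(u):={\bf P}_{N_n}F\bigl(u_{N_n}+\widetilde M_{N_n}(u_{N_n})\bigr)+A\widetilde M_{N_n}(u_{N_n})\\ +\widetilde M'_{N_n}(u_{N_n})\Bigl[{\bf P}_{N_n}F\bigl(u_{N_n}+\widetilde M_{N_n}(u_{N_n})\bigr)-Au_{N_n}\Bigr],
\end{multline*}
which manifestly depends on $u_{N_n}$ only, giving item 3). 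A direct chain-rule computation shows that $z$ decouples as advertised, so $\widetilde{\Cal M}_{N_n}=\{z=0\}$ is forward invariant and every trajectory of \eqref{5.maineq} is attracted to it with exponential rate $\lambda_{N_n+1}$; applying ${\bf P}_{N_n}$ on the manifold recovers exactly the extended IF \eqref{3.IF-ext}. This proves item 2) and the IF part of item 3).

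Item 1) is then checked as follows. Take $u(t)\in\Cal M_{N_1}\subset\widetilde{\Cal M}_{N_n}$ solving the original equation; since $\Cal M_{N_1}$ is invariant, differentiating the identity ${\bf Q}_{N_n}u(t)=\widetilde M_{N_n}(u_{N_n}(t))$ (which holds along $\Cal M_{N_1}$ because $\widetilde M_{N_n}|_{{\bf P}_{N_n}\Cal M_{N_1}}={\bf Q}_{N_n}M_{N_1}$ by Definition \ref{Def3.ext-IF}) gives via the chain rule the algebraic relation
\[
{\bf Q}_{N_n}F(u)=A\widetilde M_{N_n}(u_{N_n})+\widetilde M'_{N_n}(u_{N_n})\bigl[{\bf P}_{N_n}F(u)-Au_{N_n}\bigr],
\]
which together with the manifest equality of the ${\bf P}_{N_n}$-components shows $\widetilde F_n(u)=F(u)$ on $\Cal M_{N_1}$. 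Uniqueness of solutions for \eqref{5.maineq} then implies that every original trajectory in $\Cal M_{N_1}$ also solves the modified equation, so $\Cal M_{N_1}$ is invariant with identical internal dynamics; its normal hyperbolicity and global stability inside $\widetilde{\Cal M}_{N_n}$ are inherited from Corollary \ref{Cor3.track}.

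Finally, for the regularity class: the first two summands in $\widetilde F_n$ are $C^{n,\eb}$ (since $F\in C_b^\infty$ and $\widetilde M_{N_n}\in C^{n,\eb}(H_{N_n},D(A))$), but the third summand involves $\widetilde M'_{N_n}$, which is only $C^{n-1,\eb}$---this is the source of the claimed regularity $\widetilde F_n\in C^{n-1,\eb_n}_b$. Global boundedness is enforced (if not already automatic) by a standard smooth cut-off in $u_{N_n}$ supported outside a large ball containing the relevant portion of $\widetilde{\Cal M}_{N_n}$; this does not affect any limit dynamics. The only genuinely delicate step is the decoupling design of $\widetilde F_n$---once that is in place, everything else is straightforward bookkeeping using the results already established in Sections \ref{s2}--\ref{s3}.
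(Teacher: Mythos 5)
Your construction of $\widetilde F_n$ and the subsequent verification are essentially identical to the paper's own proof, which defines $\widetilde F_n$ by the same componentwise formula so that the off-manifold coordinate $v:={\bf Q}_{N_n}u-\widetilde M_{N_n}(u_{N_n})$ satisfies $\partial_t v+Av=0$, and then observes that the term involving $\widetilde M'_{N_n}$ drops the regularity to $C^{n-1,\eb_n}$. In fact you have written the ${\bf Q}_{N_n}$-component correctly (with $-Au_{N_n}$ inside the bracket, which is required for the argument of $\widetilde M'_{N_n}$ to lie in $H_{N_n}$), whereas the paper's displayed formula~\eqref{5.QN} appears to contain a typo ($-A\widetilde M_{N_n}(u_{N_n})$ in that position).
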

\begin{proof} Indeed, we take the manifold $\widetilde{\Cal M}_{N_n}$ constructed in Theorem \ref{Th3.main}
and define the desired function $\widetilde F_n$ as follows
\begin{equation}\label{5.PN}
{\bf P}_{N_n}\widetilde F_n(u):={\bf P}_{N_n}F(u_{N_n}+\widetilde{M}_{N_n}(u_{N_n}))
\end{equation}
and
\begin{multline}\label{5.QN}
{\bf Q}_{N_n}\widetilde F_n(u):=\\=\widetilde{M}_{N_n}'(u_{N_n})
[-A\widetilde{M}_{N_n}(u_{N_n})+{\bf P}_{N_n}F(u_{N_n}+
\widetilde{M}_{N_n}(u_{N_n}))]+A\widetilde{M}_{N_n}(u_{N_n}).
\end{multline}
Then, due to the choice of ${\bf P}_{N_n}$-component of $\widetilde F_n(u)$, the
equation for $u_{N_n}$ is decoupled from the equation for the ${\bf Q}_{N_n}$-component and coincides
with the extended IF for \eqref{5.maineq} constructed in Theorem \ref{Th3.main}. On the other hand, the
${\bf Q}_{N_n}$-component of $\widetilde F_n$ is chosen in a form which guarantees that $\widetilde{\Cal M}_{N_n}$
is an invariant manifold for equation \eqref{5.maineq}. Moreover, if $u(t)$ solves equation
\eqref{5.maineq} with such a nonlinearity and $v(t):=u(t)-{\bf P}_{N_n}u(t)-\widetilde M_{N_n}(u_{N_n}(t))$,
then this function satisfies
$$
\Dt v+Av=0,\ \ {\bf P}_{N_n}v(t)\equiv0
$$
and, therefore,
$$
\|v(t)\|_H\le \|v(0)\|_He^{-\lambda_{N_{n+1}}t}.
$$
Thus, $\widetilde M_{N_n}$ is indeed an IM for problem \eqref{5.maineq} and we only need to check
 the regularity of the modified function $\widetilde F_n$.
\par
The ${\bf P}_{N_n}$ component \eqref{5.PN} is clearly $C^{n,\eb_n}$-smooth, but
the situation with the ${\bf Q}_{N_n}$ is a bit more delicate due to the presence of terms $A\widetilde M_{N_n}(u_{N_n})$ and
$\widetilde M'_{N_n}(u_{N_n})$. The first term is not dangerous since we know that
$\widetilde M_{N_n}$ is $C^{n,\eb_n}$-smooth as the map from $H_{N_n}$ to $H^2$. The second term is
 worse and decreases the smoothness of the $\widetilde F_n$ till
  $C^{n-1,\eb_n}$. Thus, the corollary is proved.
\end{proof}
\begin{remark} The modified non-linearity $\tilde F_n(u)$ can be interpreted as a "clever" cut-off of
 the initial non-linearity $F(u)$ outside of the global
  attractor (even outside of the IM of minimal dimension). In this sense we may say that all
  obstacles for the existence of $C^{n,\eb}$-smooth IM can be removed by the appropriate cutting
   off the nonlinearity outside of the global attractor which does not affect
    the dynamics of the initial problem. This demonstrates the importance of finding the proper cut
     off procedure in the theory of IMs.
  \end{remark}
\begin{example}\label{Ex6.sell} We now return to  the model example of G. Sell introduced in Example
 \ref{Ex2.sell} and show how the problem of smoothness of an invariant manifold can be resolved. Since the non-linearity for this system is not {\it globally} Lipschitz continuous, the above developed theory is formally not applicable and we need to cut-off the nonlinearity first. We overcome this problem by considering only {\it local} manifolds in a small neighbourhood of the origin.
 \par
 Indeed, it is not difficult
  to see that system \eqref{2.counter} has an explicit particular solution
  $$
  u_1(t)=\pm e^{-t},\ \ u_{n+1}(t)=C_ne^{-2^n t}t^{2^n-1},\ \ n>1,
  $$
  where the coefficients $C_n$ satisfy the recurrent relation
  $$
  C_{n+1}=\frac1{2^n-1}C_{n}^2,\ \ C_0=1.
  $$
  This solution determines 1D local invariant manifold
  $$
  \Cal M_1=\{p+M(p):\, p\in H_1=\R,\ \  |p|<\beta\},
  $$
   where $M:\R\to H$
  is defined by $M=(0,M_1(p),M_2(p),\cdots,)$ and
  $$
  M_{n+1}(p)=C_n p^{2^n}\(\ln\frac1{|p|}\)^{2^n-1},\ \ n\in\Bbb N
  $$
  which is 1D IM for system \eqref{2.counter} and $\beta$ is a sufficiently small positive number.
   Indeed, since $C_n\le 2^{-\alpha 2^n}$ for
   some positive $\alpha$, this manifold is well-defined  as a local submanifold of $H=l_2$ (if $\beta>0$ is small enough)
    and is $C^{1,\eb}$-smooth for any $\eb\in(0,1)$. Moreover, we see that $M_2(p)$
    is only $C^{1,\eb}$-smooth and higher components are
     more regular, in particular, $M_n(p)$ is $C^{2^{n-1}-1,\eb}$-smooth. This guesses us how to define
      the extended manifolds of an arbitrary finite smoothness. Namely, let us fix some $n\in\Bbb N$
      and consider the following manifold:
      \begin{multline}
\widetilde {\Cal M}_n:=\{p+\widetilde M_n(p),\ \ p\in H_n,\ \ |p_1|<\beta\},\\
\widetilde M_n(p):=(\{0\}^n,M_{n+1}(p_1), M_{n+2}(p_1), M_{n+3}(p_1),\cdots).
      \end{multline}
Clearly $\widetilde{\Cal M}_n$ is $C^{2^n-1,\eb}$-smooth and $\Cal M_1$ is a submanifold of
 $\widetilde{\Cal M}_n$. Moreover, if we define the modified non-linearity $\widetilde F_n(u)$ as follows:
 \begin{equation}
\widetilde F_n(u)=(0,u_1^2,u_2^2,\cdots, u_{n-1}^2,M_{n+1}(u_1),M_{n+2}(u_1),\cdots),
 \end{equation}
 then it will be $C^{2^n-1,\eb}$-smooth and the extended manifold $\widetilde{\Cal M}_n$
 will be an IM
 for the corresponding modified equation \eqref{5.maineq}. Finally, the normal
 hyperbolicity of $\Cal M_1$ in $\widetilde{\Cal M}_n$ follows from the fact that any solution on
  $\Cal M_1$ decays to zero not faster than  $e^{-t}$ due to the non-zero first component, if we look to the transversal directions, the smallest decay rate is determined by the second component and this decay is at least as $t^3 e^{-2t}$. Since our model system is explicitly
   solvable, we leave verifying of this normal hyperbolicity to the reader. We also note that
   the extended IF in this case reads
   $$
   \frac d{dt}u_1+u_1=0,\ \ \frac d{dt}u_k+2^{k-1}u_k=u_{k-1}^2,\ \ k=2,\cdots,n
   $$
   which is nothing more than the Galerkin approximation system to \eqref{2.counter}.
\end{example}
\begin{remark}\label{Rem5.final} We see that, in the toy example of equation \eqref{2.counter}, we can
find the desired extension of the initial IM explicitly without using the Whitney extension theorem
 (and even without assuming the global boundedness of $F$ and its derivatives). Moreover, the
 dependence of smoothness of the extended IM on its dimension is very nice, namely, if we want to
 have $C^n$-smooth IM, it is enough to take $\dim \widetilde M\sim \log_2 n$. Of course, this
  is partially related with good exponentially growing spectral gaps, but the main reason
  is that we have an extra regularity property for the initial IM, namely, that the smoothness
  of projections ${\bf Q}_k M(p)$ grows with $k$. Unfortunately, this is not true in a more or
   less general case which makes the extension construction much more involved. In particular, we do
    not know how to gain more than one unit of smoothness from one spectral gap and have to use $n$
    different spectral gaps to get $n$ units of smoothness. This, in turn, leads to extremely fast growth of
    the dimension of the manifold with respect to the regularity (as not difficult to see,
    in Example \ref{Ex6.1}, the dimension of $\widetilde{\Cal M}_{N_n}$ grows
     as a double exponent with respect to $n$).
\par
We believe that this problem is technical and the estimates for the dimension can be essentially improved.
 Indeed, if we would be able to get $n$ units of extra regularity using one extra
  (sufficiently large) gap the above mentioned growth of the dimension would become linear in $n$ in
   Example \ref{Ex6.1}. We expect that this linear growth is optimal, and even able to construct
    the corresponding Taylor jets. But these jets do not satisfy the compatibility conditions and we do not
     know how to correct them properly.
\end{remark}


\begin{thebibliography}{9}
\bibitem{BV92}
A. Babin and M. Vishik, {\it Attractors of evolution equations,} Studies in Mathematics and its
Applications, 25. North-Holland Publishing Co., Amsterdam, 1992.
\bibitem{bates} P. Bates, K. Lu and C. Zeng, {\it Persistence of Overflowing Manifolds for Semiflow.} Comm. Pure Appl. Math., vol. 52, (1999) 983--1046.
\bibitem{3}
A. Ben-Artzi, A. Eden, C. Foias, and B. Nicolaenko,
{\it H\"older continuity for the inverse of
Mane’s projection.}
J. Math. Anal. Appl., vol.178, (1993) 22--29.
\bibitem{CV02}
V. Chepyzhov and M. Vishik, {\it Attractors for equations of mathematical physics,} American Mathematical Society Colloquium Publications, 49. American Mathematical Society,
Providence, RI, 2002.
\bibitem{S-sell}
S.-N. Chow, K. Lu, and G. Sell,
{\it Smoothness of inertial manifolds},
Jour.  Math. Anal. and Appl.,
vol. 169, no. 1
(1992)  283--312.
\bibitem
{CFNT89}
 P. Constantin, C. Foias, B. Nicolaenko, and R. Temam,
 \emph{Inertial Manifolds for Dissipative Partial Differential Equations (Applied Mathematical Sciences,
  no. 70)}, Springer-Verlag, New York, 1989.
\bibitem{EKZ13}
A. Eden, V. Kalanarov and S. Zelik, {\it Counterexamples to the regularity of Mane projections and global
attractors,} Russian Math Surveys, vol. 68, no. 2, (2013) 199--226.
\bibitem{fef}
C. Fefferman, {\it A sharp form of Whitney's extension theorem}, Annals of Mathematics, vol. 161,  no.1,  (2005) 509--577.
\bibitem{Fen72}
N. Fenichel, {\it Persistence and smoothness of invariant manifolds for flows,} Indiana Univ.
Math. J., vol. 21, (1971/1972), 193--226.
\bibitem
{FST88}
 C. Foias, G. Sell, and  R. Temam,
 \emph{Inertial manifolds for nonlinear evolutionary equations},
 J. Differential Equations, vol. 73, no. 2, (1988) 309--353.
\bibitem
{GG18}
 C. Gal and Y. Guo,
 \emph{Inertial manifolds for the hyperviscous Navier-Stokes equations},
  J. Differential Equations, vol. 265, no. 9, (2018) 4335--4374.
\bibitem{hajo}
P. H\'ajek and Michal Johanis,
{\it Smooth Analysis in Banach Spaces},
In: De Gruyter Series in Nonlinear Analysis and Applications, 19, De Gruyter , 2014.
\bibitem{ha}
{\au J. Hale},
{\bk Asymptotic Behaviour of Dissipative Systems},
\eds{Math. Surveys and Mon.}{AMS Providence, RI}{1987}
\bibitem{hen} D. Henry, {\it Geometric theory of semilinear parabolic equations.} Lecture Notes in Mathematics,
840. Springer-Verlag, Berlin–New York, 1981.
\bibitem{Hirsh} M. Hirsch, C. Pugh, and M. Shub, {\it Invariant manifolds.} Lecture Notes in Mathematics, Vol.
583. Springer-Verlag, Berlin–New York, 1977.
\bibitem{hunt}
B. Hunt and V. Kaloshin,
{\it Regularity of embeddings of infinite-dimensional fractal sets
into finite-dimensional spaces.}
Nonlinearity,  vol. 12, (1999) 1263--1275.
\bibitem{katok}
A. Katok and B. Hasselblatt,
{\it Introduction to the modern theory of dynamical systems.}
Encyclopedia of Mathematics and its Applications, 54. Cambridge University Press, Cambridge, 1995.
\bibitem{kie}
H. Kielh\"ofer, {\it Bifurcation Theory: An Introduction with Applications to Partial Differential Equations}, Applied Mathematical Sciences 156, Springer-Verlag New York,  2012.
\bibitem{kok} N. Koksch, {\it Almost Sharp Conditions for the Existence of Smooth Inertial manifolds.} in: Equadiff 9: Conference on Differential Equations and their Applications : Proceedings, edited by Z. Dosla, J. Kuben, J. Vosmansky, Masaryk University, Brno, 1998, 139--166.
\bibitem
{K18}
 A. Kostianko,
  \emph{Inertial manifolds for the 3D modified-Leray-$\alpha$ model with periodic boundary conditions},
   J. Dyn. Differ. Equations, vol. 30, no. 1, (2018) 1--24.
\bibitem{K20}
A. Kostianko, {\it  Bi-Lipschitz Man\'e projectors and finite-dimensional reduction
 for complex Ginzburg-Landau equation,} Proc. A  R. Soc.  London, vol. 476, no. 2239, (2020) 1--14.
\bibitem
{KZ15}
A. Kostianko and S. Zelik,
\emph{Inertial manifolds for the 3D Cahn-Hilliard equations with periodic boundary conditions},
 Commun. Pure Appl. Anal., vol. 14, no. 5, (2015) 2069--2094.
\bibitem{KZ18}
A. Kostianko and S. Zelik,
 {\it Inertial manifolds for 1D reaction-diffusion-advection systems. Part II:
Periodic boundary conditions,} Commun. Pure Appl. Anal., vol. 17, no. 1, (2018)  265--317.
\bibitem{KZ17}
 A. Kostianko and S. Zelik,
 {\it Inertial manifolds for 1D reaction-diffusion-advection systems. Part I:
Dirichlet and Neumann boundary conditions,} Commun. Pure Appl. Anal., vol. 16, no. 6, (2017) 2357–
2376.
\bibitem{kwa}
A. Kostianko and S. Zelik, {\it Kwak Transform and Inertial Manifolds revisited}, Jour. Dyn. Diff. Eqns, to appear.
\bibitem
{LS20}
 X. Li and C. Sun,
 \emph{Inertial manifolds for the 3D modified-Leray-$\alpha$ model},
 J. Differential Equations, vol. 268, no. 4, (2020) 1532--1569.
\bibitem
{M-PS88}
J. Mallet-Paret and G. Sell,
\emph{Inertial manifolds for reaction diffusion equations in higher space dimensions},
J. Am. Math. Soc., vol. 1, no. 4, (1988) 805--866.
\bibitem{MPSS93}
J. Mallet-Paret, G. Sell, and Z. Shao, {\it Obstructions to the existence of normally hyperbolic
inertial manifolds,} Indiana Univ. Math. J., 42, no. 3, (1993) 1027–1055.
\bibitem
{M91}
M. Miklavcic,
\emph{A sharp condition for existence of an inertial manifold},
J. Dyn. Differ. Equations, vol. 3, no. 3, (1991) 437--456.
\bibitem
{MZ08}
 A. Miranville and S. Zelik, \emph{Attractors for dissipative partial differential
  equations in bounded and unbounded domains},
   in: Handbook of Differential Equations: Evolutionary Equations,
    vol. IV, Elsevier/North-Holland, Amsterdam, 2008.
\bibitem{Rob1}
J. Robinson, {\it Global Attractors: Topology and Finite-Dimensional Dynamics}, Jour. Dyn.  Dif. Eqns., vol.  11. (1999) 557--581.
\bibitem
{R01}
J. Robinson,
\emph{Infinite-dimensional Dynamical Systems},
Cambridge University Press, 2001.
\bibitem{R11}
J. Robinson, {\it Dimensions, embeddings, and attractors,} Cambridge University Press,
Cambridge, 2011.
\bibitem{faa}
S. Roman, {\it The Formula of Faa di Bruno}, Amer. Math. Monthly,  vol. 87, (1980)  805--809.
\bibitem
{R94}
A.  Romanov, \emph{Sharp estimates for the dimension of inertial manifolds
for nonlinear parabolic equations},
Izv. Math. vol. 43, no. 1, (1994) 31--47.
\bibitem{Rom00}
A. Romanov, {\it Three counterexamples in the theory of inertial manifolds,} Math. Notes, vol. 68,
no. 3–4, (2000) 378--385.
\bibitem{RT96}
R. Rosa and R. Temam,
{\it Inertial manifolds and normal hyperbolicity},
Acta Applicandae Mathematica, vol. 45, (1996) 1--50.
\bibitem
{SY02}
G. Sell and Y. You, \emph{Dynamics of evolutionary equations}, Springer, New York, 2002.
\bibitem{stein}
E. Stein, {\it Singular Integrals and Differentiability Properties of Functions}, Princeton Univ. Press, Princeton, 1970.
\bibitem
{T97}
 R. Temam,
 \emph{Infinite-Dimensional Dynamical systems in Mechanics and Physics},
 second edition, Applied Mathematical Sciences, vol 68, Springer-Verlag, New York, 1997.
\bibitem{wells}
 J.  Wells, {\it Differentiable functions on Banach spaces with Lipschitz
derivatives},  J. Diff. Geom., vol. 8,  (1973) 135--152.
\bibitem
{Z14}
S. Zelik,
\emph{Inertial manifolds and finite-dimensional reduction for dissipative PDEs},
Proc. Royal Soc. Edinburgh 144, vol. 6, (2014) 1245--1327.
\end{thebibliography}
\end{document}